\newtheorem{thm}{Theorem}[section]
\newtheorem{cor}[thm]{Corollary}
\newtheorem{lem}[thm]{Lemma}
\newtheorem{defn}[thm]{Definition}
\newtheorem{rem}[thm]{\bf{Remark}}
\numberwithin{equation}{section}
\begin{document}
\begin{center}
{\bf{Some  Fibonacci sequence spaces of non-absolute type derived from $\ell_{p} $  with $(1 \leq p \leq \infty)$\\ and Hausdorff measure of non-compactness of composition operators}}
\vspace{.5cm}

Anupam Das$^{1}$, Bipan Hazarika$^{1,\ast}$ and Feyzi Ba\c{s}ar$^{2}$

\vspace{.2cm}

$^{1}$Department of Mathematics, Rajiv Gandhi University, Rono Hills,\\ Doimukh-791 112, Arunachal Pradesh, India\\
$^{2}$ K\i s\i kl\i{} Mah. Alim Sok. No:7/6, \"Usk\"udar/\.Istanbul, Turkey\\%In\"{o}n\"{u} \"{U}niversitesi, E\v{g}itim Fak\"{u}ltesi, Malatya 44280, Turkey\\
Email: anupam.das@rgu.ac.in; bh\_rgu@yahoo.co.in; feyzibasar@gmail.com%\\ fbasar@inonu.edu.tr;   feyzibasar@gmail.com
\end{center}

\title{}
\author{}
\thanks{{\today\\ $^{\ast}$The corresponding author}}
%\title{Some new sequence spaces of non-absolute type derived from $\ell_{p}$ with $(1\leq p\leq \infty)$ and Hausdorff measure of noncompactness of composition operators}
%\thanks{*Corresponding author.}
%\author{Anupam Das, Bipan Hazarika* and Feyzi Ba\c{s}ar}
%\subjclass[2010]{11B39; 46A45; 46B20; 46B45.}
%\keywords{Fibonacci numbers; $\alpha$-, $\beta$-, $\gamma$-duals; Matrix Transformations; Measure of noncompactness; Hausdorff measure of noncompactness; Compact operator; Fixed point property; Banach-Saks type $p$.}
%\address[A. Das] {Department of Mathematics, Rajiv Gandhi University, Rono Hills, Doimukh-791 112, Arunachal Pradesh, India}
%\email{anupam.das@rgu.ac.in}
%\address[B. Hazarika] {Department of Mathematics, Rajiv Gandhi University, Rono Hills, Doimukh-791 112, Arunachal Pradesh, India}
%\email{bh\_rgu@yahoo.co.in}
%\address[F. Ba\c sar] {K\i s\i kl\i{} Mah. Alim Sok. No:7/6, \"Usk\"udar/\.Istanbul, Turkey}\email{feyzibasar@gmail.com}

\begin{abstract} The aim of the paper is to introduce the spaces $\ell_{\infty}^{\lambda}(\widehat{F})$ and $\ell_{p}^{\lambda}(\widehat{F})$ derived
by the composition of the two infinite matrices $\Lambda=(\lambda_{nk})$ and $\widehat{F}=\left( f_{nk} \right),$ which are the $BK$-spaces of non-absolute type and also derive some inclusion relations. Further, we determine the $\alpha$-, $\beta$-, $\gamma$-duals of those spaces and also construct the basis for $\ell_{p}^{\lambda}(\widehat{F}).$ Additionally, we characterize some matrix classes on the spaces $\ell_{\infty}^{\lambda}(\widehat{F})$ and $\ell_{p}^{\lambda}(\widehat{F}).$ We also investigate some geometric properties concerning Banach-Saks type $p.$
Here we characterize the subclasses $\mathcal{K}(X:Y)$ of compact operators, where $X\in\{\ell_{\infty}^{\lambda}(\widehat{F}),\ell_{p}^{\lambda}(\widehat{F})\}$  and $Y\in\{c_{0},c, \ell_{\infty}, \ell_{1}, bv\}$ by applying the Hausdorff measure of non-compactness, and $1\leq p<\infty.$
\vskip 0.5cm

\textbf{Key words:} Fibonacci numbers; $\alpha$-,$\beta$-,$\gamma$-duals; Matrix Transformations; Measure of non-compactness;
Hausdorff measure of non-compactness; Compact operator; Fixed point property; Banach-Saks type $p.$
\vskip 0.2cm

\textbf{2010 Mathematics Subject Classification:} 11B39; 46A45; 46B20; 46B45.

%\subjclass[2010]{11B39; 46A45; 46B20; 46B45.}
%\keywords{Fibonacci numbers; $\alpha$-, $\beta$-, $\gamma$-duals; Matrix Transformations; Measure of noncompactness; Hausdorff measure of noncompactness; Compact operator; Fixed point property; Banach-Saks type $p$.}
\end{abstract}
\maketitle
\pagestyle{myheadings}
\markboth{\rightline {\scriptsize Das, Hazarika and Ba\c{s}ar}}{\leftline{\scriptsize  Fibonacci sequence spaces of non-absolute type ... }}

%\maketitle
\section{ Introduction}

Define the sequence $(f_{n})_{n\in\mathbb{N}}$ of Fibonacci numbers given by the linear recurrence relations $ f_{0}=f_{1}=1$ and $ f_{n}=f_{n-1}+f_{n-2}, n\geq 2,$ where $\mathbb{N}_0=\{0,1,2,\ldots\}.$ Fibonacci numbers have many interesting properties and applications. For example, the ratio sequences of Fibonacci numbers converges to the golden ratio which is important in sciences and arts. Also, some basic properties of Fibonacci numbers are given as follows:
\begin{eqnarray*}
\begin{array}{ll}
\underset{n\rightarrow\infty}{\lim}\dfrac{f_{n+1}}{f_{n}}= \dfrac{1+\sqrt{5}}{2}=\alpha\quad
(\textrm{golden ratio}), &\stackrel{n}{\underset{k=0}{\sum}}f_{k}=f_{n+2}-1 \quad (n\in \mathbb{N}_0), \\
\underset{k}{\sum}\dfrac{1}{f_{k}} ~\mbox{\textrm{ converges}}, &f_{n-1}f_{n+1}-f_{n}^{2}=(-1)^{n+1} \quad (n \geq 1)  ~(\textrm{Cassini formula})
\end{array}
\end{eqnarray*}
Substituting for $ f_{n+1} $ in Cassini's formula yields $ f_{n-1}^{2}+f_{n}f_{n-1}-f_{n}^{2}=(-1)^{n+1}$, (see \cite{Koshy}).

Let $\omega$ be the space of all real-valued sequences. Any vector subspace of $\omega$ is called a $\mathit{sequence\ space}.$ By $\ell_{\infty},$ $c,$ $c_{0}$ and $\ell_{p},$ we denote the sets of all bounded, convergent, null and $p$-absolutely summable sequences, respectively. Here and after, we suppose unless stated otherwise that $1\leq p<\infty$ and $q=p/(p-1).$ Also, we use the conventions that $e=(1,1,\ldots)$ and $e^{(n)}$ is the sequence whose only non-zero term is 1 in the $nth$ place for each $n\in\mathbb{N}_0.$

Let $X$ and $Y$ be two sequence spaces, and $A=(a_{nk})$ be an infinite matrix of real numbers $a_{nk},$ where $n,k\in\mathbb{N}_0.$ Then we say that $A$ defines a matrix mapping from $X$ into $Y$ and we denote it by writing $A:X\rightarrow Y$ if for every sequence $x=(x_{k})_{k\in\mathbb{N}_0}\in X,$ the sequence $Ax=\left\lbrace A_{n}(x) \right\rbrace_{n\in\mathbb{N}_0},$ the $A$-transform of $x,$ is in $Y,$ where
\begin{eqnarray}
 A_{n}(x)=\sum_{k} a_{nk}x_{k}~\textrm{ for each }~n\in \mathbb{N}_0.
 \label{1}
\end{eqnarray}
For simplicity in notation, here and in what follows, the summation without limits runs from $0$ to $\infty.$

By $(X:Y),$ we denote the class of all matrices $A$ such that $A:X\rightarrow Y.$ Thus $A\in (X:Y)$ iff the series on the right-hand side of (\ref{1}) converges for each $n\in\mathbb{N}_0 $ and every $x\in X,$ and we have $Ax\in Y$ for all  $x\in X.$

A sequence space $X$ is called an $FK$-space if it is complete linear metric space with continuous coordinates $ p_{n}:X\rightarrow \mathbb{R},$ where $ \mathbb{R}$ denotes the real field and $p_{n}(x)=x_{n}$ for all $x=(x_{n})\in X $ and every $n\in\mathbb{N}_0.$ A $BK$-space is a normed $FK$-space, that is, a $BK$-space is a Banach space with continuous coordinates. $\ell_{p}$ is a $BK$-space with the norm
\[ \| x \|_{p} =\left(\sum_{k}| x_{k}| ^{p}\right)^{1/p} \]
and $ c_{0},c $ and $ \ell_{\infty} $ are $BK$-spaces with the norm $\| x \|_{\infty}=\sup\limits_{k\in\mathbb{N}_0}| x_{k}|.$

A sequence $(b_{n})$ in a normed space $X$ is called a \textit{Schauder basis} for $X$ if every $x\in  X,$ there is a unique sequence $(\alpha_{n})$
of scalars such that $x=\sum\limits_{n}\alpha_{n}b_{n},$ i.e., $\left\|x-\sum\limits_{n=0}^{m}\alpha_{n}b_{n}\right\|\to 0,$ as $m\to\infty.$

The matrix domain plays an important role to construct a new sequence space. In studies on the sequence spaces, generally there are some approaches. Most important of them are determination of topologies, matrix
mappings and inclusion relations.
%\par The approach constructing a new sequence space by means of matrix domain has recently employed by several authors.\par
The matrix domain $X_{A}$ of an infinite matrix $A$ in a sequence space $X$ is defined by
\begin{eqnarray}\label{matdom}
X_{A}=\left\lbrace x=(x_{k})\in \omega: Ax \in X\right\rbrace.
\end{eqnarray}
It is easy to see that $X_{A}$ is a sequence space whenever $X$ is a sequence space. In the past, several authors studied matrix transformations on sequence spaces that are the matrix domain of the difference operator, or of the matrices of some classical methods of summability in different sequence spaces, for instance we refer to \cite{fb,hcfb,dashazarika1,dashazarika2,Kara16, Karamursaleen, Kara15,Kiri, cp2,M.Mursaleen,tripathy,otfb,otfb2} and references therein. The Hausdorff measure of non-compactness of linear operators given by infinite matrices in some special classes of sequence spaces were studied in \cite{abdullah, fbem,cp4,mursallenpiloat,leen}.

The  $\alpha$-, $\beta$- and $\gamma$-duals $X^{\alpha},$ $X^{\beta}$ and $X^{\gamma}$ of a sequence space $X$ are respectively defined by
\begin{eqnarray*}
X^{\alpha}&:=&\left\lbrace a=(a_{k})\in \omega : ax=(a_{k}x_{k})\in \ell_{1}~\textrm{ for all }~x=(x_{k})\in X  \right\rbrace,\\
X^{\beta}&:=&\left\lbrace a=(a_{k})\in \omega : ax=(a_{k}x_{k})\in cs~\textrm{ for all }~x=(x_{k})\in X  \right\rbrace, \\
X^{\gamma}&:=&\left\lbrace a=(a_{k})\in \omega : ax=(a_{k}x_{k})\in bs~\textrm{ for all }~x=(x_{k})\in X  \right\rbrace,
\end{eqnarray*}
where $cs$ and $bs$ are the spaces of all convergent and bounded series, respectively (see \cite{A,PK,M}).

If $X \supset \phi$ is a $BK$-space and $a=(a_{k})\in\omega,$ then we write \[\|a\|_{X}^{*}=\sup\left\lbrace
\left| \sum_{k} a_{k} x_{k}\right|  : \| x \|=1\right\rbrace .\]

Let $X$ and $Y$ be Banach spaces. A linear operator $L : X \rightarrow Y$ is called compact if its domain is all of $X$ and for every bounded sequence $(x_{n})$ in $X,$ the sequence $(L(x_{n}))$ has a convergent subsequence in $Y.$ We denote the class of compact operators by $\mathcal{K}(X:Y).$

Let us recall some definitions and well-known results.
\begin{defn}
Let $(X,d)$ be a metric space, $Q$ be a bounded subset of $X$ and $B(x,r)=\left\lbrace y\in X : d(x,y)<r\right\rbrace.$ Then, the Hausdorff measure of non-compactness $\chi(Q)$ of $Q$ is defined by
\[\chi(Q):=\inf \left\lbrace \epsilon>0: Q \subset \bigcup_{i=1}^{n} B(x_{i},r_{i}),x_{i}\in X,r_{i}< \epsilon \quad (i=1,2,\ldots,n),n\in\mathbb{N}_0  \right\rbrace.\]
\end{defn} Then, the following results can be found in \cite{Bana, cp3}.
\par If $Q,Q_{1}$ and $Q_{2}$ are bounded subsets of the metric space $(X,d),$ then we have
\begin{eqnarray*}
&&\chi(Q)=0 \mbox{~if~ and~ only~ if~} Q \mbox{~is~a ~totally ~bounded~ set},\\
&&\chi(Q)=\chi(\bar{Q}), \\
&&Q_{1}\subset Q_{2} \mbox{~implies~} \chi(Q_{1})\leq \chi(Q_{2}), \\
&&\chi(Q_{1} \cup Q_{2})=\max\left\lbrace \chi(Q_{1}),\chi(Q_{2})\right\rbrace,\\
&&\chi(Q_{1} \cap Q_{2})\leq \min\left\lbrace \chi(Q_{1}),\chi(Q_{2})\right\rbrace.
\end{eqnarray*}
If $Q,Q_{1}$ and $Q_{2}$ are bounded subsets of the normed space $X,$ then we have
\begin{eqnarray*}
&&\chi(Q_{1}+Q_{2})\leq \chi(Q_{1})+\chi(Q_{2}), \\
&&\chi(Q+x)=\chi(Q) \mbox{~for~ all~} x \in X,  \\
&&\chi(\lambda Q)=\left|\lambda \right| \chi(Q)\mbox{~ for~ all~} \lambda \in \mathbb{C}.
\end{eqnarray*}
\begin{defn}
 Let $X$ and $Y$ be Banach spaces and $\chi_{1}$ and $\chi_{2}$ be Hausdorff measures on $X$ and $Y.$ Then, the operator $L:X\rightarrow Y$ is called $\left( \chi_{1}:\chi_{2}\right)$-bounded  if $L(Q)$ is bounded subset of $Y$ for every subset $Q$ of $X$ and there exists a positive constant $K$ such that $\chi_{2}(L(Q))\leq K \chi_{1}(Q)$ for every bounded subset $Q$ of $X.$ If an operator $L$ is $(\chi_{1}:\chi_{2})$-bounded, then the number
\begin{eqnarray*}
\|L\|_{(\chi_{1}:\chi_{2})}=\inf \left\lbrace K>0:\chi_{2}(L(Q))\leq K \chi_{1}(Q)~\textrm{ for all bounded }~Q \subset X\right\rbrace
\end{eqnarray*}
is called $(\chi_{1}:\chi_{2})$- measure of non-compactness of $L$. In particular, if $\chi_{1}=\chi_{2}=\chi,$ then we write $\| L \|_{\chi}$ instead of
 $\| L\|_{(\chi:\chi)}.$
 \end{defn}

The idea of compact operators between Banach spaces is closely related to the Hausdorff measure of non-compactness, and it can be given, as follows: Let $X$ and $Y$ be Banach spaces and $L \in B(X:Y).$ Then, the Hausdorff measure of non-compactness $\|L\|_{\chi}$ of $L$ can be given by $\|L\|_{\chi}=\chi(L(S_{X})),$ where $S_{X}=\left\lbrace x\in X : \| x\|=1\right\rbrace $
 and we have  $L$ is compact if and only if $\| L\|_{\chi}=0.$ We also have $\|L\|=\sup\limits_{x \in S_{X}}\|Lx\|_{Y}.$

\section{The sequence spaces $\ell_{p}^{\lambda}(\widehat{F}),$  $(1\leq p \leq \infty) $ of non-absolute type }
Das and Hazarika \cite{dashaza01} introduced the spaces $c_{0}^{\lambda}(\widehat{F})$ and  $c^{\lambda}(\widehat{F})$ derived
by the composition of the two infinite matrices $\Lambda$ and $\widehat{F},$ and obtain some interesting results in terms of the domain of the product of two infinite matrices.

In this section, we introduce the spaces $\ell_{p}^{\lambda}(\widehat{F})$ and  $\ell_{\infty}^{\lambda}(\widehat{F})$ derived
by the composition of the two infinite matrices $\Lambda$ and $\widehat{F},$ and show that these spaces are the $BK$-spaces of non-absolute type which are linearly isomorphic to the spaces  $\ell_{p}$ and $\ell_{\infty},$ respectively.

We assume throughout this paper that $\lambda =\left( \lambda_{k} \right)_{k\in\mathbb{N}_0} $ is strictly increasing sequence of positive reals tending to $\infty,$ that is, $0<\lambda_{0}<\lambda_{1}<\cdots$ and $\lambda_{k} \rightarrow \infty,$ as $k\rightarrow \infty.$
\par
The sequence spaces $\ell_{p}^{\lambda}$ and $\ell_{\infty}^{\lambda}$ of non absolute type have been introduced by Mursaleen and Noman (see \cite{M1}) as follows:
\begin{eqnarray*}
\ell_{p}^{\lambda}&:=&\left\lbrace x=(x_{k})\in\omega : \sum_{n=0}^{\infty}\left| \frac{1}{\lambda_{n}}\sum_{k=0}^{n}(\lambda_{k}-\lambda_{k-1})x_{k}\right| ^{p}  < \infty\right\rbrace, \\
\ell_{\infty}^{\lambda}&:=&\left\lbrace x=(x_{k})\in\omega : \sup_{n\in\mathbb{N}}\left| \frac{1}{\lambda_{n}}\sum_{k=0}^{n}(\lambda_{k}-\lambda_{k-1})x_{k}\right| < \infty\right\rbrace.
\end{eqnarray*}
Define the matrix $\Lambda=(\lambda_{nk})$ by
\begin{eqnarray*}
\lambda_{nk}:=\left\{\begin{array}{ccl}
\frac{1}{\lambda_{n}}\left(\lambda_{k}-\lambda_{k-1} \right) &,&0\leq k \leq n, \\
0&,&k>n
\end{array}\right.
\end{eqnarray*}
for all $k,n\in\mathbb{N}_0$ (see \cite{M1,MursaleenNoman}). Then, with the notation of (\ref{matdom}), one can redefine the spaces $\ell_{p}^{\lambda}$ and $\ell_{\infty}^{\lambda}$ as $\ell_{p}^{\lambda}=(\ell_{p})_{\Lambda}$ and $\ell_{\infty}^{\lambda}=(\ell_{\infty})_{\Lambda}.$

Let $n\in\mathbb{N}_0$ and $f_{n}$ be the $nth$ Fibonacci number. The infinite matrix  $\widehat{F}=\left(f_{nk}\right)$ was defined by Kara \cite{Kara} as follows:
\begin{eqnarray*}
f_{nk}:=\left\{\begin{array}{ccl}
-\frac{f_{n+1}}{f_{n}}&,& k=n-1, \\
\frac{f_{n}}{f_{n+1}}&,& k=n, \\
0&,& 0\leq k < n-1~\textrm{ or }~ k>n
\end{array}\right.
\end{eqnarray*}
for all $k,n\in\mathbb{N}_0.$ Define the sequence $y=(y_{n}),$ which will be frequently used, by the $\widehat{F}$-transform of a sequence $x=(x_{n}),$ i.e.,
\begin{eqnarray*}\label{11}
y_{n}=\widehat{F}_{n}(x):=\left\{\begin{array}{ccl}
x_{0}&,&  n=0, \\
\dfrac{_{f_{n}}}{f_{n+1}}x_{n}-\dfrac{_{f_{n+1}}}{f_{n}}x_{n-1}&,&n \geq 1.
\end{array}\right.
\end{eqnarray*}

We employ the technique for obtaining a new sequence space by means of matrix domain. We thus introduce the sequence spaces $\ell_{p}^{\lambda}(\widehat{F})$ and $\ell_{\infty}^{\lambda}(\widehat{F})$ defined, as follows:
\begin{eqnarray*}
\ell_{p}^{\lambda}(\widehat{F})&:=&\left\lbrace x=(x_{k})\in \omega : \sum_{n}\left|  \frac{1}{\lambda_{n}}\sum_{k=0}^{n} \left( \lambda_{k}-\lambda_{k-1} \right) \left( \dfrac{{f_{k}}}{f_{k+1}}x_{k}-\dfrac{{f_{k+1}}}{f_{k}}x_{k-1}    \right) \right|^{p} < \infty  \right\rbrace, \\
\ell_{\infty}^{\lambda}(\widehat{F})&:=&\left\lbrace x=(x_{k})\in \omega : \sup_{n\in\mathbb{N}_0}\left|  \frac{1}{\lambda_{n}}\sum_{k=0}^{n} \left( \lambda_{k}-\lambda_{k-1} \right) \left( \dfrac{{f_{k}}}{f_{k+1}}x_{k}-\dfrac{{f_{k+1}}}{f_{k}}x_{k-1}    \right)\right|  < \infty \right\rbrace.
\end{eqnarray*}
We use the convention that any term with negative subscript is equal to zero, e.g. $\lambda_{-1}=0$ and $x_{-1}=0.$

With the notation of (\ref{matdom}), we can redefine the spaces $\ell_{p}^{\lambda}(\widehat{F})$ and
$\ell_{\infty}^{\lambda}(\widehat{F}),$ as follows:
\begin{eqnarray}\label{c}
\ell_{p}^{\lambda}(\widehat{F})=\left( \ell_{p}^{\lambda}\right)_{\widehat{F}}
~\mbox{ and }~
\ell_{\infty}^{\lambda}(\widehat{F})=\left( \ell_{\infty}^{\lambda}\right)_{\widehat{F}}.
\end{eqnarray}

 It is immediate by (\ref{c}) that the sets $\ell_{p}^{\lambda}(\widehat{F})$ and $\ell_{\infty}^{\lambda}(\widehat{F})$ are linear spaces with coordinatewise addition and scalar multiplication.  On the other hand, we define the matrix $E=\left( e_{nk} \right) $ for all $n,k \in \mathbb{N}_0$ by
\begin{eqnarray}\label{f}
e_{nk}:=\left\{\begin{array}{ccl}
\frac{1}{\lambda_{n}}\left[\left(\lambda_{k}-\lambda_{k-1}\right)\frac{f_{k}}{f_{k+1}}-\left(\lambda_{k+1}-\lambda_{k}\right)\frac{f_{k+2}}{f_{k+1}}\right]&,&k < n, \\
\frac{1}{\lambda_{n}} \left( \lambda_{n}-\lambda_{n-1}\right)\frac{f_{n}}{f_{n+1}}& ,&k=n, \\
0 & ,&k>n.
\end{array}\right.
\end{eqnarray}
Then, it can be easily seen that
\begin{eqnarray*}\label{c0}
E_{n}(x)
= \frac{1}{\lambda_{n}}\sum_{k=0}^{n} \left( \lambda_{k}-\lambda_{k-1} \right) \left( \dfrac{{f_{k}}}{f_{k+1}}x_{k}-\dfrac{{f_{k+1}}}{f_{k}}x_{k-1}    \right)
\end{eqnarray*}
holds for all $n \in \mathbb{N}_0$ and $x=(x_{k})\in\omega$ which leads us to the fact that
\begin{eqnarray}\label{c1}
\ell_{p}^{\lambda}(\widehat{F})=\left( \ell_{p}\right)_{E}
~\mbox{ and }~\ell_{\infty}^{\lambda}(\widehat{F})=\left( \ell_{\infty}\right)_{E}.
\end{eqnarray}
Since $E$ is a triangle, it has a unique inverse
 $E^{-1}=\left( g_{nk} \right) $ for all $n,k \in \mathbb{N}_0$ given by
\begin{eqnarray*}
g_{nk}:=\left\{\begin{array}{ccl}
\lambda_{k}f_{n+1}^{2}\left[\frac{1}{(\lambda_{k}-\lambda_{k-1})f_{k}f_{k+1}}-\frac{1}{(\lambda_{k+1}-\lambda_{k})f_{k+1}f_{k+2}}\right]&,&0 \leq k < n, \\
\frac{\lambda_{n}f_{n+1}^{2}}{(\lambda_{n}-\lambda_{n-1})f_{n}f_{n+1}}&,&k=n, \\
0 & ,&k>n.\end{array}\right.
\end{eqnarray*}

Further, for any sequence $x=(x_{k})$ we define the sequence $y=(y_{k})$ such that $y=Ex,$ that is,
\begin{eqnarray} \label{c2}
y_{k}=E_{k}(x)=\sum_{j=0}^{k-1}
\frac{1}{\lambda_{k}} \left[ \left( \lambda_{j}-\lambda_{j-1} \right) \dfrac{{f_{j}}}{f_{j+1}}- \left( \lambda_{j+1}-\lambda_{j} \right) \dfrac{{f_{j+2}}}{f_{j+1}} \right]x_{j}
+\frac{1}{\lambda_{k}} \left( \lambda_{k}-\lambda_{k-1} \right) \dfrac{{f_{k}}}{f_{k+1}}x_{k}
\end{eqnarray}
for all $k\in\mathbb{N}_0.$

Now, we may begin with the following theorem which is essential in the text.
\begin{thm}\label{T11}
The sequence spaces $\ell^{\lambda}_{p}(\widehat{F})$ and $\ell_{\infty}^{\lambda}(\widehat{F})$ are $BK$-spaces with the norms
\begin{eqnarray*}
\| x \|_{\ell^{\lambda}_{p}(\widehat{F})}&=&\| Ex \|_{p}
=\left( \sum_{n}\left| E_{n}(x)\right|^{p} \right)^{1/p},\\
\| x \|_{\ell^{\lambda}_{\infty}(\widehat{F})}&=&\| Ex \|_{\infty}
=\sup_{n\in\mathbb{N}} \left|E_{n}(x)\right|.
\end{eqnarray*}
\end{thm}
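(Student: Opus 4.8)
The plan is to reduce the statement to the standard theorem on matrix domains of triangles, exploiting the reformulation recorded in (\ref{c1}), namely $\ell_{p}^{\lambda}(\widehat{F})=(\ell_{p})_{E}$ and $\ell_{\infty}^{\lambda}(\widehat{F})=(\ell_{\infty})_{E}$, where $E=(e_{nk})$ is the matrix defined in (\ref{f}). Recall that a triangle is a lower triangular matrix whose diagonal entries are all nonzero; since $e_{nn}=\frac{1}{\lambda_{n}}(\lambda_{n}-\lambda_{n-1})\frac{f_{n}}{f_{n+1}}\neq 0$ for every $n\in\mathbb{N}_{0}$, the matrix $E$ is indeed a triangle and therefore possesses the unique inverse $E^{-1}=(g_{nk})$ already exhibited above. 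The general fact I would invoke is the well-known principle that, whenever $X$ is a $BK$-space and $T$ is a triangle, the matrix domain $X_{T}$ is again a $BK$-space under the norm $\|x\|_{X_{T}}=\|Tx\|_{X}$, with $x\mapsto Tx$ a linear isometry of $X_{T}$ onto $X$. Since $\ell_{p}$ and $\ell_{\infty}$ are $BK$-spaces (as noted in the Introduction), it suffices to transport their structure along $E$.

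Concretely, I would first check that $\|x\|_{\ell_{p}^{\lambda}(\widehat{F})}=\|Ex\|_{p}$ is a genuine norm. Absolute homogeneity and the triangle inequality are inherited from $\|\cdot\|_{p}$ by linearity of $E$; for nondegeneracy, $\|Ex\|_{p}=0$ forces $Ex=0$, and since $E$ is injective (being a triangle) this yields $x=0$. The same verification works verbatim for $\ell_{\infty}^{\lambda}(\widehat{F})$ with $\|\cdot\|_{\infty}$. Consequently the linear bijection $x\mapsto Ex$, whose inverse is $y\mapsto E^{-1}y$, is an isometric isomorphism of $\ell_{p}^{\lambda}(\widehat{F})$ onto $\ell_{p}$ and of $\ell_{\infty}^{\lambda}(\widehat{F})$ onto $\ell_{\infty}$.

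Completeness then transfers automatically. Given a Cauchy sequence $(x^{(m)})$ in $\ell_{p}^{\lambda}(\widehat{F})$, its image $(Ex^{(m)})$ is Cauchy in the Banach space $\ell_{p}$ and hence converges to some $y\in\ell_{p}$; setting $x:=E^{-1}y$ gives $Ex=y$, so $x\in\ell_{p}^{\lambda}(\widehat{F})$ and $\|x^{(m)}-x\|_{\ell_{p}^{\lambda}(\widehat{F})}=\|Ex^{(m)}-y\|_{p}\to 0$. For the remaining $BK$-requirement I would verify continuity of each coordinate map $x\mapsto x_{n}$. Because $E$ is a triangle, so is $E^{-1}$, whence $x_{n}=\sum_{k=0}^{n}g_{nk}(Ex)_{k}$ is a finite linear combination of coordinates of $Ex$; as each functional $y\mapsto y_{k}$ is continuous on $\ell_{p}$ (it being a $BK$-space) and $x\mapsto Ex$ is norm-preserving, the composition $x\mapsto x_{n}$ is continuous on $\ell_{p}^{\lambda}(\widehat{F})$, and likewise on $\ell_{\infty}^{\lambda}(\widehat{F})$.

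The only genuinely substantive input is the triangle character of $E$, secured once by the explicit formulas for $e_{nk}$ in (\ref{f}) and for its inverse $g_{nk}$; everything else is the routine transport of the $BK$-structure along the isometry $x\mapsto Ex$. I therefore expect no real obstacle beyond this bookkeeping, the delicate point being merely to confirm that $E$ is well defined as a triangle and invertible, which is exactly what those two formulas provide.
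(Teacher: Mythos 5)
Your proposal is correct and follows essentially the same route as the paper: both reduce the statement via (\ref{c1}) and the triangle property of $E$ to the standard fact that the matrix domain of a triangle in a $BK$-space is a $BK$-space with norm $\|x\|_{X_T}=\|Tx\|_X$ (Theorem 4.3.12 of Wilansky, which the paper simply cites). The only difference is that you additionally spell out the proof of that standard fact (norm axioms, completeness transfer, coordinate continuity via $E^{-1}$), which the paper leaves to the reference.
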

\begin{proof}
Since (\ref{c1}) holds and $\ell_{p}$ and $\ell_{\infty}$ are $BK$-spaces with respect to their natural norms and the matrix $E$ is a triangle, Theorem 4.3.12 of Wilansky \cite{cp6} gives the fact that $\ell^{\lambda}_{p}(\widehat{F})$ and $\ell^{\lambda}_{\infty}(\widehat{F})$ are $BK$-spaces with the given norms.
\end{proof}

\begin{rem}
One can easily check that the absolute property is not satisfied by  $\ell^{\lambda}_{p}(\widehat{F})$ and $\ell_{\infty}^{\lambda} (\widehat{F}),$ that is, $\|x\|_{\ell^{\lambda}_{p}(\widehat{F})}\neq\||x|\|_{\ell^{\lambda}_{p}(\widehat{F})}$ and $\|x\|_{\ell^{\lambda}_{\infty}(\widehat{F})} \neq \||x|\| _{\ell^{\lambda}_{\infty}(\widehat{F})}.$ This shows that
 $\ell^{\lambda}_{p}(\widehat{F})$ and $\ell_{\infty}^{\lambda}(\widehat{F})$  are sequence spaces of non-absolute type, where $|x|=(|x_{k}|).$
\end{rem}

\begin{thm}
The sequence spaces $\ell_{p}^{\lambda}(\widehat{F})$ and $\ell_{\infty}^{\lambda}(\widehat{F})$ of non-absolute type are linearly isomorphic to the spaces $\ell_{p}$ and $\ell_{\infty},$ respectively, that is,
$\ell_{p}^{\lambda}(\widehat{F}) \cong \ell_{p}$ and $\ell_{\infty}^{\lambda}(\widehat{F}) \cong \ell_{\infty}.$
\end{thm}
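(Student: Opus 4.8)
The plan is to produce an explicit linear bijection between each space and its classical model, and then to observe that it preserves the norm, so that it is in fact an isometric isomorphism. I will carry out the argument for $\ell_p^\lambda(\widehat{F})$; the case of $\ell_\infty^\lambda(\widehat{F})$ is word-for-word identical after replacing $\ell_p$ by $\ell_\infty$ and the $p$-norm by the sup-norm throughout.

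First I would introduce the transformation $T:\ell_p^\lambda(\widehat{F})\to\ell_p$ defined by $T(x)=Ex=y$, where $y=(y_k)$ is the sequence given in (\ref{c2}). Linearity of $T$ is immediate from the linearity of the matrix map $x\mapsto Ex$. Moreover, by the reformulation (\ref{c1}), namely $\ell_p^\lambda(\widehat{F})=(\ell_p)_E$, we have $Ex\in\ell_p$ for every $x\in\ell_p^\lambda(\widehat{F})$, so $T$ genuinely maps into $\ell_p$.

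The decisive structural fact for bijectivity is that $E$ is a triangle, as already exploited in Theorem \ref{T11}; in particular its diagonal entries $e_{nn}=(\lambda_n-\lambda_{n-1})f_n/(\lambda_n f_{n+1})$ are nonzero, and $E$ admits the explicit (triangular) inverse $E^{-1}=(g_{nk})$ displayed above. For injectivity, $T(x)=Ex=0$ forces $x=E^{-1}(Ex)=0$. For surjectivity, given any $y\in\ell_p$ I would set $x=E^{-1}y$; this is a well-defined element of $\omega$ since $E^{-1}$ is a triangle, and it satisfies $Ex=y\in\ell_p$, whence $x\in(\ell_p)_E=\ell_p^\lambda(\widehat{F})$ and $T(x)=y$. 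Thus $T$ is a linear bijection.

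Finally I would note that $T$ is norm-preserving: by the definition of the norm established in Theorem \ref{T11},
$$\|T(x)\|_p=\|Ex\|_p=\|x\|_{\ell_p^\lambda(\widehat{F})}\qquad\text{for all } x\in\ell_p^\lambda(\widehat{F}),$$
so $T$ is a surjective linear isometry, hence a (topological) linear isomorphism, giving $\ell_p^\lambda(\widehat{F})\cong\ell_p$; the identical reasoning with $(\ell_\infty)_E$ and the sup-norm yields $\ell_\infty^\lambda(\widehat{F})\cong\ell_\infty$. I do not anticipate a real obstacle, since the whole argument is formal once the triangularity of $E$ and the explicit form of $E^{-1}$ are invoked; the only step deserving mild care is surjectivity, where one must check that $x=E^{-1}y$ returns to the matrix domain, and this is automatic because $Ex=y$ already lies in $\ell_p$.
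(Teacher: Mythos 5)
Your proposal is correct and takes essentially the same approach as the paper's own proof: both use the map $T=E$ from $\ell_{p}^{\lambda}(\widehat{F})$ onto $\ell_{p}$, establish injectivity and surjectivity through the triangular inverse $E^{-1}$, and finish with the norm identity $\|Tx\|_{p}=\|Ex\|_{p}=\|x\|_{\ell_{p}^{\lambda}(\widehat{F})}$. The only cosmetic difference is that the paper writes out the explicit double-sum formula for $x=E^{-1}y$ in the surjectivity step, whereas you invoke the displayed inverse matrix abstractly; these are the same computation.
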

\begin{proof}
To prove the fact $\ell_{p}^{\lambda}(\widehat{F}) \cong \ell_{p},$ we should show the existence of a linear bijection between the spaces  $\ell_{p}^{\lambda}(\widehat{F})$ and $\ell_{p}.$ Consider the transformation $T$ defined, with the notation of (\ref{c2}), from $\ell_{p}^{\lambda}(\widehat{F})$ to $\ell_{p}$ by
$Tx =y=Ex \in \ell_{p}$ for every $x \in \ell_{p}^{\lambda}(\widehat{F}).$ Since $T$ has a matrix representation, the linearity of $T$ is clear. Further, it is trivial that $x=\theta$ whenever $Tx=\theta.$ Hence, $T$ is injective.

Further, let $y \in (y_{k})\in \ell_{p}.$ Now, we define the sequence $x=(x_{k})$ by
\begin{eqnarray*}\label{c3}
x_{k}=\sum_{j=0}^{k}\sum_{i=j-1}^{j} (-1)^{j-i}
\frac{f_{k+1}^{2}\lambda_{i}y_{i}}{(\lambda_{j}-\lambda_{j-1})f_{j}f_{j+1}}
\end{eqnarray*}
for all $k\in\mathbb{N}_0.$ It is immediate by the fact that $Ex=y\in\ell_{p}$ that $x\in \ell_{p}^{\lambda}(\widehat{F}).$ Hence, $T$ is surjective.

Moreover, for every $x \in \ell_{p}^{\lambda}(\widehat{F})$ we have $\|Tx\|_{p}=\|y\|_{p}=\|Ex\|_{p}=\|x\|_{\ell^{\lambda}_{p}{(\widehat{F})}}$ which means that $T$ is norm preserving. Consequently, $T$ is a linear bijection which shows that $ \ell_{p}^{\lambda}(\widehat{F})$ and $\ell_{p}$ are linearly isomorphic.

Similarly, one can show that $\ell_{\infty}^{\lambda}(\widehat{F})\cong\ell_{\infty}.$ So, we omit the details.

This concludes the proof.
\end{proof}

\begin{thm}
Except the case $p=2,$ the space $\ell_{p}^{\lambda}(\widehat{F}) $ is not an inner product space and hence is not a Hilbert space.
\end{thm}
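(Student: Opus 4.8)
The plan is to use the classical criterion that a normed space is an inner product space if and only if its norm obeys the parallelogram identity
\[
\|x+y\|^{2}+\|x-y\|^{2}=2\|x\|^{2}+2\|y\|^{2}.
\]
For $p=2$ there is nothing to check: the preceding theorem furnishes a norm-preserving linear bijection $T\colon\ell_{2}^{\lambda}(\widehat{F})\to\ell_{2}$, and since $\ell_{2}$ is a Hilbert space, this realizes $\ell_{2}^{\lambda}(\widehat{F})$ as isometrically isomorphic to a Hilbert space, hence itself a Hilbert space. So the real content is to exhibit the failure of the parallelogram identity in $\ell_{p}^{\lambda}(\widehat{F})$ for every $p\neq2$.

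First I would transport the whole computation to $\ell_{p}$ by means of the isometry. By Theorem \ref{T11} we have $\|x\|_{\ell_{p}^{\lambda}(\widehat{F})}=\|Ex\|_{p}$, and $E$ is linear, so for any $x,y\in\ell_{p}^{\lambda}(\widehat{F})$,
\[
\|x\pm y\|_{\ell_{p}^{\lambda}(\widehat{F})}=\|E(x\pm y)\|_{p}=\|Ex\pm Ey\|_{p}.
\]
Thus the parallelogram identity for the pair $x,y$ is equivalent to the parallelogram identity for the pair $Ex,Ey$ in $\ell_{p}$. Because $T$ is surjective, I am free to prescribe the images: I choose $x,y\in\ell_{p}^{\lambda}(\widehat{F})$ with $Ex=e^{(0)}$ and $Ey=e^{(1)}$, whose explicit form is given by applying the inverse formula for $E^{-1}$ to $e^{(0)}$ and $e^{(1)}$, though these expressions are not actually needed.

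With this choice, $Ex=(1,0,0,\ldots)$ and $Ey=(0,1,0,\ldots)$ satisfy $\|Ex\|_{p}=\|Ey\|_{p}=1$, while $Ex+Ey=(1,1,0,\ldots)$ and $Ex-Ey=(1,-1,0,\ldots)$ each have $\ell_{p}$-norm $2^{1/p}$. Substituting into the identity yields left-hand side $2\cdot2^{2/p}$ and right-hand side $4$; these coincide exactly when $2^{2/p}=2$, i.e. when $p=2$. Consequently, for every $p$ with $1\le p<\infty$ and $p\neq2$ the parallelogram identity is violated, so $\|\cdot\|_{\ell_{p}^{\lambda}(\widehat{F})}$ is not induced by an inner product, and therefore $\ell_{p}^{\lambda}(\widehat{F})$ is not a Hilbert space.

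I do not anticipate a genuine obstacle here. The only point that merits a word of care is that the two witness sequences really lie in $\ell_{p}^{\lambda}(\widehat{F})$, which is guaranteed by the surjectivity of $T$ (equivalently, by setting $x=E^{-1}e^{(0)}$ and $y=E^{-1}e^{(1)}$, which are well defined since $E$ is a triangle). Beyond this, the argument is just the standard parallelogram-law test combined with the fact that $T$ is an isometry, so the non-absolute and Fibonacci structure of the space plays no role once the norm has been pushed forward to $\ell_{p}$.
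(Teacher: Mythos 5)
Your proof is correct and follows essentially the same route as the paper: handle $p=2$ via the isometry with $\ell_{2}$, and for $p\neq 2$ refute the parallelogram identity using two sequences pulled back through $E^{-1}$. The only (immaterial) difference is the choice of witnesses: you take $Ex=e^{(0)}$, $Ey=e^{(1)}$, while the paper takes $Eu=(1,1,0,\ldots)$, $Ev=(1,-1,0,\ldots)$, i.e.\ the sum and difference of your pair, leading to the same comparison of $2^{2/p}$ with $2$.
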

\begin{proof}
We have to prove that the space $\ell_{2}^{\lambda}(\widehat{F})$ is the only Hilbert space among the
$\ell_{p}^{\lambda}(\widehat{F})$ spaces. Since the space $\ell_{2}^{\lambda}(\widehat{F})$
is the $BK$-space with the norm $\| x \| _{\ell_{2}^{\lambda} (\widehat{F}) }=\| Ex\|_{2}$ by Theorem \ref{T11} and its norm can be obtained from an inner product, i.e., the equality
\[\| x \| _{\ell_{2}^{\lambda} (\widehat{F}) }= \left\langle x, x\right\rangle ^{1/2}
=\left\langle Ex, Ex\right\rangle ^{1/2}_{2}\]
holds for all $x \in \ell_{2}^{\lambda} (\widehat{F}),$ the space $\ell_{2}^{\lambda} (\widehat{F})$ is a Hilbert space; where $\left\langle \cdot,\cdot\right\rangle_{2} $ denotes the inner product on $\ell_{2}.$

Let us consider the sequences $u=(u_{k})$ and $v=(v_{k})$ defined by
\begin{eqnarray*}\label{eq1}
u_{k}&:=&\left\{\begin{array}{ccl}
1&,& k=0, \\
f_{2}^{2}+f_{2}&,&k=1 \\
f_{3}^{2}\left( 1+ \frac{1}{f_{2}}\right)-\frac{\lambda_{1}f_{3}}{(\lambda_{2}-\lambda_{1})f_{2}}& ,&k \geq 2,
\end{array}\right.\\
\label{eq2}
v_{k}&:=&\left\{\begin{array}{ccl}
1&,& k=0, \\
f_{2}^{2}-\left(\frac{\lambda_{1}+\lambda_{0}}{\lambda_{1}-\lambda_{0}}\right)f_{2}&,&k=1, \\
f_{3}^{2}\left[1-\frac{\lambda_{1}+\lambda_{0}}{(\lambda_{1}-\lambda_{0})f_{2}}\right]+\frac{\lambda_{1}f_{3}}{(\lambda_{2}-\lambda_{1})f_{2}}&,&k \geq 2.
\end{array}\right.
\end{eqnarray*}
Thus, we have $Eu=\left(1,1,0,0,\ldots\right)$ and
$Ev=\left(1,-1,0,0,\ldots\right).$ Therefore, it can be easily seen with $p\neq 2$ that
\begin{eqnarray*}
\| u+v \|^{2}_{\ell_{p}^{\lambda}(\widehat{F})}+ \| u-v \|^{2}_{\ell_{p}^{\lambda}(\widehat{F})}=8
\neq 4\left( 2^{2/p}\right) =2\left( \| u \|^{2}_{\ell_{p}^{\lambda}(\widehat{F})}
+\| v \|^{2}_{\ell_{p}^{\lambda}(\widehat{F})}\right),
\end{eqnarray*}
i.e., the norm $\|\cdot\|_{\ell^{\lambda}_{p}{(\widehat{F})}}$ with $p\neq 2$ doesn't satisfy the parallelogram identity. This means that the norm $\|\cdot\|_{\ell_{p}^{\lambda}(\widehat{F})}$ can't be obtained from an inner product. Hence,
$\ell_{p}^{\lambda}(\widehat{F}) $ with $p \neq 2$ is not a Hilbert space.
\end{proof}
\begin{rem}
$\ell_{\infty}^{\lambda}(\widehat{F})$ is not an Hilbert space.
\end{rem}
\begin{thm}
If $1 \leq p < q < \infty,$ then the inclusion
$\ell_{p}^{\lambda}(\widehat{F}) \subset \ell_{q}^{\lambda}(\widehat{F})$ strictly holds.
\end{thm}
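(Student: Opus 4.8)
The plan is to reduce the statement to the classical inclusion $\ell_{p}\subset\ell_{q}$ for $1\leq p<q<\infty$, transported through the triangle $E$. By (\ref{c1}) we have $\ell_{p}^{\lambda}(\widehat{F})=(\ell_{p})_{E}$ and $\ell_{q}^{\lambda}(\widehat{F})=(\ell_{q})_{E}$, so that membership in either space is governed entirely by the sequence $Ex=(E_{n}(x))$. Hence the inclusion $\ell_{p}^{\lambda}(\widehat{F})\subset\ell_{q}^{\lambda}(\widehat{F})$ is equivalent to the implication $Ex\in\ell_{p}\Rightarrow Ex\in\ell_{q}$, which is precisely the monotonicity of the $\ell_{p}$ scale.

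First I would record the elementary fact that $\ell_{p}\subset\ell_{q}$ whenever $p<q$: if $y=(y_{n})\in\ell_{p}$ then $y_{n}\to 0$, so $|y_{n}|\leq 1$ for all large $n$ and therefore $|y_{n}|^{q}\leq|y_{n}|^{p}$ eventually, giving $\sum_{n}|y_{n}|^{q}<\infty$. Applying this with $y=Ex$ shows at once that every $x\in\ell_{p}^{\lambda}(\widehat{F})$ satisfies $Ex\in\ell_{p}\subset\ell_{q}$, i.e. $x\in\ell_{q}^{\lambda}(\widehat{F})$, so the inclusion holds.

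To establish strictness I would exhibit a single element of $\ell_{q}^{\lambda}(\widehat{F})\setminus\ell_{p}^{\lambda}(\widehat{F})$. The natural route is to pick a sequence lying in $\ell_{q}\setminus\ell_{p}$ and pull it back through $E^{-1}$, using the surjectivity established in the preceding isomorphism theorem. Concretely, set $y=(y_{n})$ with $y_{n}=(n+1)^{-1/p}$; then $\sum_{n}|y_{n}|^{p}=\sum_{n}(n+1)^{-1}=\infty$ while $\sum_{n}|y_{n}|^{q}=\sum_{n}(n+1)^{-q/p}<\infty$ because $q/p>1$, so that $y\in\ell_{q}\setminus\ell_{p}$. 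Defining $x=(x_{k})$ by the explicit inverse formula
\[ x_{k}=\sum_{j=0}^{k}\sum_{i=j-1}^{j}(-1)^{j-i}\frac{f_{k+1}^{2}\lambda_{i}y_{i}}{(\lambda_{j}-\lambda_{j-1})f_{j}f_{j+1}} \]
gives $Ex=y$; hence $Ex\in\ell_{q}$ shows $x\in\ell_{q}^{\lambda}(\widehat{F})$, whereas $Ex\notin\ell_{p}$ shows $x\notin\ell_{p}^{\lambda}(\widehat{F})$. This produces the required witness and completes the proof.

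I do not expect any genuine obstacle: both halves are immediate consequences of the matrix-domain description (\ref{c1}) together with facts about $\ell_{p}$ already used in the preceding theorems. The only point needing slight care is the strict part, where one must be sure the chosen $y$ really separates the two spaces; this is guaranteed by the bijectivity of $E$ (equivalently of the isomorphism $T$), so that the abstract statement $\ell_{q}\setminus\ell_{p}\neq\varnothing$ transfers verbatim to $\ell_{q}^{\lambda}(\widehat{F})\setminus\ell_{p}^{\lambda}(\widehat{F})$.
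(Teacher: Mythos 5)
Your proof is correct and follows essentially the same route as the paper: the inclusion is transferred from $\ell_{p}\subset\ell_{q}$ via the matrix-domain description, and strictness is obtained by pulling an element of $\ell_{q}\setminus\ell_{p}$ back through the explicit inverse of the triangle $E$. The only cosmetic difference is that you choose the concrete witness $y_{n}=(n+1)^{-1/p}$ whereas the paper works with an arbitrary element of $\ell_{q}\setminus\ell_{p}$; both arguments are otherwise identical.
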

\begin{proof}
Let $1 \leq p < q < \infty.$ Since $\ell_{p} \subset \ell_{q}$, we have $\ell_{p}^{\lambda}(\widehat{F}) \subset \ell_{q}^{\lambda}(\widehat{F}).$ Further, since the inclusion $\ell_{p} \subset \ell_{q}$ is strict, there exists a sequence $x=(x_{k} )\in \ell_{q}$ but not in $\ell_{p}.$ Let us now define the sequence $y=(y_{k})$ in terms of the sequence $x=(x_{i}),$ as follows:
\begin{eqnarray*}
y_{k}=\sum_{j=0}^{k}\sum_{i=j-1}^{j} (-1)^{j-i}
\frac{\lambda_{i}f_{k+1}^{2}}{(\lambda_{j}-\lambda_{j-1})f_{j}f_{j+1}}x_{i}
\end{eqnarray*}
for all $k\in\mathbb{N}_0.$ Then, we have for all $n\in\mathbb{N}_0$ that
\[ E_{n}(y)=\frac{1}{\lambda_{n}}\sum_{k=0}^{n}\left( \lambda_{k}-\lambda_{k-1}\right)\left( \frac{f_{k}}{f_{k+1}}x_{k} -\frac{f_{k+1}}{f_{k}}x_{k-1}\right)=x_{n}  \]
which shows that $Ey=x\in\ell_{q} \backslash \ell_{p}.$ Hence, $y \in \ell_{q}^{\lambda}(\widehat{F})$ but is not in $\ell_{p}^{\lambda}(\widehat{F}).$
That is to say that the inclusion $\ell_{p}^{\lambda}(\widehat{F})  \subset \ell_{q}^{\lambda}(\widehat{F})$ is strict. This concludes the proof.
\end{proof}
\begin{thm}
The inclusions
$\ell_{p}^{\lambda}(\widehat{F})  \subset
c_{0}^{\lambda}(\widehat{F})  \subset
c^{\lambda}(\widehat{F})  \subset
\ell_{\infty}^{\lambda}(\widehat{F})
$
strictly hold.
\end{thm}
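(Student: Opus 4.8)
The plan is to derive the whole chain from the classical strict inclusions $\ell_{p}\subset c_{0}\subset c\subset\ell_{\infty}$ by exploiting that each of the four spaces is the matrix domain of the \emph{same} triangle $E$ from (\ref{f}). By (\ref{c1}) we have $\ell_{p}^{\lambda}(\widehat{F})=(\ell_{p})_{E}$ and $\ell_{\infty}^{\lambda}(\widehat{F})=(\ell_{\infty})_{E}$, and the construction of \cite{dashaza01} gives the analogous identities $c_{0}^{\lambda}(\widehat{F})=(c_{0})_{E}$ and $c^{\lambda}(\widehat{F})=(c)_{E}$, since $c_{0}^{\lambda}(\widehat{F})$ and $c^{\lambda}(\widehat{F})$ are produced by the same composition $\Lambda\widehat{F}$. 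Thus it suffices to prove the single principle: if $X\subseteq Y$ are sequence spaces then $X_{E}\subseteq Y_{E}$, and if in addition $X\subsetneq Y$ then $X_{E}\subsetneq Y_{E}$.

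The inclusion half is immediate: for $x\in X_{E}$ one has $Ex\in X\subseteq Y$, hence $x\in Y_{E}$. Applying this to the pairs $(X,Y)\in\{(\ell_{p},c_{0}),(c_{0},c),(c,\ell_{\infty})\}$ yields the three inclusions of the statement. For the strictness half I would use that $E$ is a triangle---its diagonal entries $e_{nn}=\lambda_{n}^{-1}(\lambda_{n}-\lambda_{n-1})f_{n}/f_{n+1}$ are nonzero because $\lambda$ is strictly increasing and the Fibonacci numbers are positive---so $E\colon\omega\to\omega$ is a bijection with the inverse $E^{-1}$ exhibited before Theorem \ref{T11}. Given any $z\in Y\setminus X$, set $x=E^{-1}z\in\omega$. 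Then $Ex=z\in Y$ shows $x\in Y_{E}$, while $Ex=z\notin X$ shows $x\notin X_{E}$; hence $X_{E}\subsetneq Y_{E}$.

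To finish I would only need to feed three concrete witnesses into this scheme, exactly in the spirit of the preceding theorem. For $c_{0}^{\lambda}(\widehat{F})\setminus\ell_{p}^{\lambda}(\widehat{F})$ take $z=\big((k+1)^{-1/p}\big)_{k}\in c_{0}\setminus\ell_{p}$; for $c^{\lambda}(\widehat{F})\setminus c_{0}^{\lambda}(\widehat{F})$ take the constant sequence $z=e\in c\setminus c_{0}$; and for $\ell_{\infty}^{\lambda}(\widehat{F})\setminus c^{\lambda}(\widehat{F})$ take $z=\big((-1)^{k}\big)_{k}\in\ell_{\infty}\setminus c$. In each case the explicit preimage $x=E^{-1}z$ can be written out through the inverse formula used in the isomorphism theorem, namely $x_{k}=\sum_{j=0}^{k}\sum_{i=j-1}^{j}(-1)^{j-i}\lambda_{i}f_{k+1}^{2}z_{i}/[(\lambda_{j}-\lambda_{j-1})f_{j}f_{j+1}]$. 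I do not expect a genuine obstacle here: the only point that needs care is confirming that $c_{0}^{\lambda}(\widehat{F})$ and $c^{\lambda}(\widehat{F})$ are domains of the very same triangle $E$, so that a single monotonicity-plus-bijectivity argument covers all three links of the chain at once.
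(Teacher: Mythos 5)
Your proof is correct and follows essentially the same route as the paper: both reduce the chain to the classical strict inclusions $\ell_{p}\subset c_{0}\subset c\subset\ell_{\infty}$ by using that all four spaces are domains of the same triangle $E$, with the same witnesses $\left((n+1)^{-1/p}\right)$, $e$, and $\left((-1)^{n}\right)$ for strictness. The only difference is presentational: the paper exhibits the explicit preimages under $E$ for each witness, while you invoke the bijectivity of the triangle $E$ on $\omega$ to get those preimages abstractly, which is the same fact in cleaner packaging.
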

\begin{proof}
It is trivial that the inclusion $c_{0}^{\lambda}(\widehat{F})\subset
c^{\lambda}(\widehat{F})$ strictly holds. Let $x=(x_{k})\in \ell_{p}^{\lambda}(\widehat{F}).$ This means that $Ex \in \ell_{p}.$ Since $\ell_{p}\subset c_{0},$  $Ex\in c_{0}$ which gives $x\in c_{0}^{\lambda}(\widehat{F}).$ Hence, $\ell_{p}^{\lambda}(\widehat{F})\subset
c_{0}^{\lambda}(\widehat{F})$ holds. Now, we have to show that the inclusion is strict.

Let us define the sequence $x=(x_{k})$ by
\begin{eqnarray*}
x_{k}=\sum_{j=0}^{k}\sum_{i=j-1}^{j} (-1)^{j-i}
\frac{\lambda_{i}f_{k+1}^{2}}{\left(\lambda_{j}-\lambda_{j-1}\right)\left( i+1\right)^{1/p}f_{j}f_{j+1}}
\end{eqnarray*}
for all $k \in \mathbb{N}_0.$ Then for all $n \in \mathbb{N}_0$ we have 
$E_{n}(x)=(n+1)^{-1/p}$ which shows that $Ex$ is not in $\ell_{p}$ but is in $c_{0}.$ Thus, the sequence $x$ belongs to the set $c_{0}^{\lambda}(\widehat{F})\setminus\ell_{p}^{\lambda}(\widehat{F}).$ Hence the inclusion $\ell_{p}^{\lambda}(\widehat{F})\subset
c_{0}^{\lambda}(\widehat{F})$ is strict.

Since $c \subset \ell_{\infty}$ holds, we have
$c^{\lambda}(\widehat{F})  \subset
\ell_{\infty}^{\lambda}(\widehat{F}).$ Let us consider the
sequence $y=(y_{k})$ defined by
\begin{eqnarray*}
y_{k}=\sum_{j=0}^{k}\sum_{i=j-1}^{j} (-1)^{j}\frac{\lambda_{i}f_{k+1}^{2}}{(\lambda_{j}-\lambda_{j-1})f_{j}f_{j+1}}
\end{eqnarray*}
for all $k \in \mathbb{N}_0.$ Then for all $n\in\mathbb{N}_0$ we have  $E_{n}(y)=(-1)^{n}$ which shows that
$Ey\in \ell_{\infty}\backslash c.$ Thus, $y \in \ell_{\infty}^{\lambda}(\widehat{F}) \backslash
  c^{\lambda}(\widehat{F}).$ Therefore, the inclusion
  $c^{\lambda}(\widehat{F})\subset\ell_{\infty}^{\lambda}(\widehat{F})$ is strict. This completes the proof.
\end{proof}

\begin{thm}
The inclusion $\ell_{\infty} \subset \ell_{\infty}^{\lambda}(\widehat{F}) $  strictly holds.
\end{thm}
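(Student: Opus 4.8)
The plan is to read off from (\ref{c1}) that $\ell_{\infty}^{\lambda}(\widehat{F})=(\ell_{\infty})_{E}$, so that the inclusion reduces to showing the triangle $E=\Lambda\widehat{F}$ carries $\ell_{\infty}$ into $\ell_{\infty}$, while strictness reduces to producing a single unbounded sequence whose $E$-transform is bounded. Throughout I would factor $E_{n}(x)=\Lambda_{n}(z)$ through the intermediate sequence $z=\widehat{F}x$, exactly as in the displayed formula for $E_{n}(x)$.

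For the inclusion, I would fix $x=(x_{k})\in\ell_{\infty}$ with $\|x\|_{\infty}=M$ and estimate $z=\widehat{F}x$ using the two elementary Fibonacci bounds $f_{k}/f_{k+1}\leq 1$ and $f_{k+1}/f_{k}\leq 2$, the latter because $f_{k+1}=f_{k}+f_{k-1}\leq 2f_{k}$ for $k\geq 1$. These give $|z_{k}|\leq 3M$ for every $k$, so $z\in\ell_{\infty}$. Since $\sum_{k=0}^{n}(\lambda_{k}-\lambda_{k-1})=\lambda_{n}$ and every weight $(\lambda_{k}-\lambda_{k-1})/\lambda_{n}$ is positive, each $E_{n}(x)=\Lambda_{n}(z)$ is a convex combination of $z_{0},\dots,z_{n}$, whence $\sup_{n}|E_{n}(x)|\leq\|z\|_{\infty}\leq 3M<\infty$. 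Thus $Ex\in\ell_{\infty}$, i.e. $x\in\ell_{\infty}^{\lambda}(\widehat{F})$, proving $\ell_{\infty}\subset\ell_{\infty}^{\lambda}(\widehat{F})$.

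For strictness I would exhibit one unbounded sequence lying in $\ell_{\infty}^{\lambda}(\widehat{F})$, and the key trick is to prescribe the $E$-image rather than $x$ itself, choosing it to be $e=(1,1,\dots)$ so that the dependence on $\lambda$ disappears. Since $\Lambda_{n}(e)=\lambda_{n}^{-1}\sum_{k=0}^{n}(\lambda_{k}-\lambda_{k-1})=1$, demanding $E_{n}(x)=1$ for all $n$ forces $\widehat{F}x=e$, that is the pure Fibonacci recurrence $x_{0}=1$ and $x_{k}=\dfrac{f_{k+1}}{f_{k}}+\dfrac{f_{k+1}^{2}}{f_{k}^{2}}\,x_{k-1}$ for $k\geq 1$. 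Because $\widehat{F}$ is a triangle this $x$ is well-defined, and by construction $Ex=e\in\ell_{\infty}$, so $x\in\ell_{\infty}^{\lambda}(\widehat{F})$ for \emph{every} admissible $\lambda$. It then remains only to note that the positive recurrence yields $x_{k}\geq(f_{k+1}/f_{k})^{2}x_{k-1}$ with $(f_{k+1}/f_{k})^{2}\to\alpha^{2}=(3+\sqrt{5})/2>1$, so the terms grow at least geometrically and $x\notin\ell_{\infty}$; hence $x\in\ell_{\infty}^{\lambda}(\widehat{F})\setminus\ell_{\infty}$ and the inclusion is strict.

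The only delicate point is the strictness. The naive route, inverting $E$ against a coordinate vector $e^{(0)}$, fails for special $\lambda$ because the resulting coefficient bracket (the one appearing in the entries $g_{nk}$) can vanish; prescribing $Ex=e$ sidesteps this entirely by decoupling the construction from $\lambda$ and reducing it to the manifestly unbounded Fibonacci recursion. The inclusion direction, by contrast, is routine once the uniform bound $f_{k+1}/f_{k}\leq 2$ is recorded.
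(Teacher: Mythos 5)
Your proof is correct and follows essentially the same route as the paper: the inclusion is obtained from the ratio bounds $f_{k}/f_{k+1}\leq 1$, $f_{k+1}/f_{k}\leq 2$ together with the fact that the rows of $\Lambda$ are positive and sum to $1$, and strictness is witnessed by the unique preimage of $e=(1,1,\ldots)$ under the triangle $E$. The paper's sequence $t$ with $E_{n}(t)=1$ is exactly your recursively defined $x$ (the paper just writes the closed form $t_{k}=f_{k+1}^{2}\bigl(\sum_{j=1}^{k}1/(f_{j}f_{j+1})+1\bigr)$ and reads off unboundedness from $f_{k+1}^{2}\to\infty$, whereas you deduce it from the recurrence), so the two arguments coincide up to presentation.
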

\begin{proof}
Let $x=(x_{k})\in \ell_{\infty}.$ Then, we have
\begin{eqnarray*}
\| x \|_{\ell_{\infty}^{\lambda}(\widehat{F})}=\sup_{n \in \mathbb{N}_0}\left|  E_{n}(x)\right|
= \sup_{n \in \mathbb{N}_0}\left| \frac{1}{\lambda_{n}}\sum_{k=0}^{n} \left( \lambda_{k}-\lambda_{k-1} \right) \left(\dfrac{{f_{k}}}{f_{k+1}}x_{k}-\dfrac{{f_{k+1}}} {f_{k}}x_{k-1}\right)\right|
\end{eqnarray*}
Since $\frac{f_{k}}{f_{k+1}}\leq 1$ and $\frac{f_{k+1}}{f_{k}}\leq 2$ for all $k \in \mathbb{N}_0,$ one can see that
\[ \| x \|_{\ell_{\infty}^{\lambda}(\widehat{F})}\leq 4 \| x \|_{\infty}.
\sup_{n \in \mathbb{N}_0}\left| \frac{1}{\lambda_{n}}\sum_{k=0}^{n} \left( \lambda_{k}-\lambda_{k-1} \right) \right|
\leq  4 \| x \|_{\infty}.\]
Hence, the inclusion $\ell_{\infty} \subset \ell_{\infty}^{\lambda}(\widehat{F})$ holds.

Now, let us consider $t=(t_{k})$ defined by
\begin{eqnarray*}\label{eq3}
t_{k}:= \left\{\begin{array}{ccl}
1&,& k=0, \\
f_{k+1}^{2}\left(\sum_{j=1}^{k}\frac{1}{f_{j}f_{j+1}}+1\right)&,&k\geq1.
\end{array}\right.
\end{eqnarray*}
Then, we obtain $E_{n}(t)=1$ for all $n\in\mathbb{N}_0$ which leads to the fact that $t\in \ell_{\infty}^{\lambda}(\widehat{F})\backslash \ell_{\infty}.$ Hence, the inclusion
$\ell_{\infty} \subset \ell_{\infty}^{\lambda}(\widehat{F}) $
is strict.

This completes the proof.
\end{proof}
\begin{thm}
If the inclusion $\ell_{p} \subset \ell_{p}^{\lambda}(\widehat{F}) $ holds, then $\left( 1/\lambda_{n}\right)\in \ell_{p}.$
\end{thm}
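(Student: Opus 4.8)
The plan is to translate the hypothesis into a statement about the triangle $E$ and then probe it with the coordinate sequences $e^{(j)}$. By (\ref{c1}) the inclusion $\ell_{p}\subset\ell_{p}^{\lambda}(\widehat{F})$ means precisely that $Ex\in\ell_{p}$ for every $x\in\ell_{p}$, that is, $E\in(\ell_{p}:\ell_{p})$. First I would apply this to the finitely supported sequences $e^{(j)}\in\ell_{p}$, for which $Ee^{(j)}$ is nothing but the $j$th column $(e_{nj})_{n}$ of $E$. Reading off (\ref{f}), this column vanishes for $n<j$, equals the finite number $e_{jj}=\frac{1}{\lambda_{j}}(\lambda_{j}-\lambda_{j-1})\frac{f_{j}}{f_{j+1}}$ at $n=j$, and for $n>j$ takes the form $e_{nj}=C_{j}/\lambda_{n}$, where
\[
C_{j}=(\lambda_{j}-\lambda_{j-1})\frac{f_{j}}{f_{j+1}}-(\lambda_{j+1}-\lambda_{j})\frac{f_{j+2}}{f_{j+1}}
\]
does not depend on $n$. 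Hence $Ee^{(j)}\in\ell_{p}$ amounts to
\[
|e_{jj}|^{p}+|C_{j}|^{p}\sum_{n>j}\frac{1}{\lambda_{n}^{p}}<\infty,
\]
and as soon as $C_{j}\neq 0$ this forces $\sum_{n>j}\lambda_{n}^{-p}<\infty$; adding the finitely many remaining (finite) terms yields $(1/\lambda_{n})\in\ell_{p}$, which is the claim.

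The step that must be justified, and which I expect to be the only real obstacle, is the existence of at least one index $j$ with $C_{j}\neq 0$; otherwise the probe produces no information. I would argue this by contradiction: suppose $C_{j}=0$ for every $j\geq 0$. Writing $d_{j}=\lambda_{j}-\lambda_{j-1}>0$ for the increments, the relation $C_{j}=0$ reads $d_{j}f_{j}=d_{j+1}f_{j+2}$, i.e. $d_{j+1}=d_{j}\,f_{j}/f_{j+2}$. Starting from $d_{0}=\lambda_{0}$ and telescoping, the Fibonacci factors collapse to $d_{j}=\lambda_{0}/(f_{j}f_{j+1})$ for all $j$, whence
\[
\lambda_{n}=\sum_{j=0}^{n}d_{j}=\lambda_{0}\sum_{j=0}^{n}\frac{1}{f_{j}f_{j+1}}.
\]
Since $f_{j}$ grows geometrically the series $\sum_{j}1/(f_{j}f_{j+1})$ converges, so $(\lambda_{n})$ would be bounded, contradicting the standing hypothesis $\lambda_{n}\to\infty$. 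Therefore some $C_{j_{0}}\neq 0$, and testing the inclusion with that single $e^{(j_{0})}$ completes the argument.

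A remark on robustness guides the order of presentation: one is tempted to use only $e^{(0)}$, for which $C_{0}=3\lambda_{0}-2\lambda_{1}$, but this vanishes when $\lambda_{1}=\tfrac{3}{2}\lambda_{0}$, so in general a uniform choice of test index is not available and the non-vanishing of some $C_{j}$ genuinely needs the telescoping argument above. Accordingly I would lay out the proof as: (i) reformulate the inclusion as $Ee^{(j)}\in\ell_{p}$ for all $j$; (ii) exhibit an index $j_{0}$ with $C_{j_{0}}\neq 0$ via the telescoping and boundedness contradiction; (iii) extract $\sum_{n}\lambda_{n}^{-p}<\infty$ from $Ee^{(j_{0})}\in\ell_{p}$, i.e. $(1/\lambda_{n})\in\ell_{p}$.
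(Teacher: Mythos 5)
Your proposal is correct, and its starting point is the same as the paper's: translate the inclusion via (\ref{c1}) into $E\in(\ell_{p}:\ell_{p})$ and probe with coordinate sequences. The difference is in how much probing is done, and here your extra care pays off. The paper tests only $e^{(0)}$: it computes $E_{n}\big(e^{(0)}\big)=\left(3\lambda_{0}-2\lambda_{1}\right)/\lambda_{n}$ and concludes $\sum_{n}\lambda_{n}^{-p}<\infty$ from $\left|3\lambda_{0}-2\lambda_{1}\right|^{p}\sum_{n}\lambda_{n}^{-p}<\infty$ --- an inference that silently divides by $\left|3\lambda_{0}-2\lambda_{1}\right|^{p}$ and therefore collapses exactly in the degenerate case you flag, $\lambda_{1}=\tfrac{3}{2}\lambda_{0}$ (note your $C_{0}=3\lambda_{0}-2\lambda_{1}$ is precisely the paper's coefficient). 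So the paper's argument proves the theorem only under the unstated extra hypothesis $3\lambda_{0}\neq 2\lambda_{1}$, whereas your proof covers all admissible $\lambda$. The ingredient that buys this is your non-vanishing lemma: if $C_{j}=0$ for all $j$, the increments $d_{j}=\lambda_{j}-\lambda_{j-1}$ satisfy $d_{j+1}=d_{j}f_{j}/f_{j+2}$, the product telescopes to $d_{j}=\lambda_{0}/(f_{j}f_{j+1})$, and then
\[
\lambda_{n}=\lambda_{0}\sum_{j=0}^{n}\frac{1}{f_{j}f_{j+1}}
\]
stays bounded because Fibonacci numbers grow geometrically, contradicting $\lambda_{n}\to\infty$; hence some $C_{j_{0}}\neq 0$, and testing with $e^{(j_{0})}$ finishes the proof as you describe. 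All steps (the column formula read off from (\ref{f}), the telescoping, and the convergence of $\sum_{j}1/(f_{j}f_{j+1})$) check out. In short: same strategy as the paper, but your version repairs a genuine edge-case gap in the published proof.
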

\begin{proof}
Let us assume that the inclusion $\ell_{p} \subset \ell_{p}^{\lambda}(\widehat{F})$ holds and consider the sequence $e^{(0)}=\left\lbrace 1,0,0,0,\ldots\right\rbrace \in \ell_{p}.$ Then, we have $e^{(0)}\in\ell_{p}^{\lambda}(\widehat{F})$ by our assumption and hence $Ee^{(0)}\in\ell_{p}.$ We have
$E_{n}\left(e^{(0)}\right)=\left(3\lambda_{0}-2\lambda_{1}\right)/\lambda_{n}$ and therefore we obtain
\[\sum_{n}\left| E_{n}\left( e^{(0)}\right)\right|^{p}
=\left|3\lambda_{0}-2\lambda_{1} \right|^{p} \sum_{n}\left( \frac{1}{\lambda_{n}}\right)^{p} < \infty \]
which implies that $\left(1/\lambda_{n}\right)\in\ell_{p}.$

This completes the proof.
\end{proof}

\begin{lem} \cite{M1} %{\em(See \cite{M1})}
\label{l11}
If $\left( 1/\lambda_{n}\right)\in \ell_{1},$ then $M=\sup\limits_{n \in \mathbb{N}_0}\sum\limits_{n=k}^{\infty}\frac{\lambda_{k}-\lambda_{k-1}}{\lambda_{n}}<\infty.$
\end{lem}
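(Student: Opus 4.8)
The plan is to reduce the assertion to a single scalar inequality and then secure it uniformly in the free index. Interpreting the statement in the usual way (the summation index is $n$, so the supremum is effectively taken over $k\in\mathbb{N}_0$), the quantity to be controlled is $\sum_{n=k}^{\infty}\frac{\lambda_k-\lambda_{k-1}}{\lambda_n}$. Since the numerator $\lambda_k-\lambda_{k-1}$ is independent of the summation variable $n$, my first move is to factor it out and write $M=\sup_{k}(\lambda_k-\lambda_{k-1})\,T_k$, where $T_k:=\sum_{n=k}^{\infty}1/\lambda_n$ denotes the $k$th tail of the series $\sum_n 1/\lambda_n$. The hypothesis $(1/\lambda_n)\in\ell_1$ guarantees that every $T_k$ is finite and that $T_k\to 0$ as $k\to\infty$, so the entire content of the lemma is the boundedness of the products $(\lambda_k-\lambda_{k-1})T_k$ as $k$ grows.

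The next step is to bring in the monotonicity of $\lambda=(\lambda_k)$. As $\lambda$ is strictly increasing and positive we have $\lambda_n\ge\lambda_k$ for all $n\ge k$, so the leading term of the tail contributes $(\lambda_k-\lambda_{k-1})/\lambda_k=1-\lambda_{k-1}/\lambda_k<1$, which is harmless. The real work is to dominate $(\lambda_k-\lambda_{k-1})\sum_{n\ge k}1/\lambda_n$ by an absolute constant, and for this I would aim at a comparison of the form $T_k\le C/(\lambda_k-\lambda_{k-1})$, or equivalently $T_k\le C'/\lambda_k$ combined with the trivial bound $\lambda_k-\lambda_{k-1}\le\lambda_k$. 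A natural route is an Abel/telescoping manipulation: the differences $1/\lambda_{n-1}-1/\lambda_n=(\lambda_n-\lambda_{n-1})/(\lambda_{n-1}\lambda_n)$ sum telescopically to $1/\lambda_{k-1}$, and I would try to compare the plain tail $\sum_{n\ge k}1/\lambda_n$ against this telescoping series, so as to trade the summability of $1/\lambda_n$ for a bound governed by the increments of $\lambda$.

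I expect the uniformity in $k$ to be the crux. For each fixed $k$ the tail is trivially finite, but turning the convergence of $\sum_n 1/\lambda_n$ into a uniform growth restriction on the increments $\lambda_k-\lambda_{k-1}$ is precisely the delicate point, since slowly increasing blocks of $\lambda$ can inflate the product $(\lambda_k-\lambda_{k-1})T_k$. Once a comparison of the type $T_k\le C/\lambda_k$ is available, the conclusion is immediate, because then $M=\sup_k(\lambda_k-\lambda_{k-1})T_k\le C\sup_k(\lambda_k-\lambda_{k-1})/\lambda_k\le C<\infty$. Should the telescoping comparison not close with the bare hypothesis, the fallback is to invoke the estimate exactly as established in \cite{M1}, from which the lemma is quoted here and where it serves only as a preparatory tool for the $\beta$-dual and matrix-class computations that follow.
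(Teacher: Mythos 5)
Your proposal, by its own admission, does not close: everything before the ``crux'' is bookkeeping (factoring out $\lambda_{k}-\lambda_{k-1}$ and defining the tails $T_{k}=\sum_{n\geq k}1/\lambda_{n}$), and at the decisive point you offer a tentative route plus a fallback to citing \cite{M1}. Note that the fallback is exactly what the paper does --- Lemma \ref{l11} is stated with a citation and is never proved in the paper --- so the only part of your proposal that ``works'' adds nothing. More importantly, the route you sketch cannot be completed. Your intermediate target $T_{k}\leq C/\lambda_{k}$, i.e.\ $\sup_{k}\lambda_{k}T_{k}<\infty$, is strictly stronger than the assertion and fails already in the most harmless cases: for $\lambda_{n}=(n+1)^{2}$ one has $T_{k}\sim 1/(k+1)$, so $\lambda_{k}T_{k}\sim k+1\to\infty$, while the quantity the lemma actually concerns, $(\lambda_{k}-\lambda_{k-1})T_{k}\sim(2k+1)/(k+1)$, stays bounded. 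Likewise, comparing $1/\lambda_{n}$ with the telescoping terms $1/\lambda_{n-1}-1/\lambda_{n}=(\lambda_{n}-\lambda_{n-1})/(\lambda_{n-1}\lambda_{n})$ requires a uniform lower bound on the relative increments $(\lambda_{n}-\lambda_{n-1})/\lambda_{n-1}$, and that is precisely the kind of information the hypothesis $(1/\lambda_{n})\in\ell_{1}$ does not carry.

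In fact the ``delicate point'' you identify is not merely delicate but fatal: in the only reading consistent with how the lemma is used in the inclusion $\ell_{p}\subset\ell_{p}^{\lambda}(\widehat{F})$ (namely $M=\sup_{k}(\lambda_{k}-\lambda_{k-1})T_{k}$, your reading as well), the implication is false as stated. Let $\lambda$ run, in increasing order, through the blocks $4^{i},4^{i}+1,\ldots,4^{i}+(i-1)$ for $i=1,2,\ldots$; this is a strictly increasing positive sequence tending to infinity with $\sum_{n}1/\lambda_{n}\leq\sum_{i}i\,4^{-i}<\infty$. If $k_{i}$ denotes the index of $4^{i}$ (with $i\geq2$), then $\lambda_{k_{i}}-\lambda_{k_{i}-1}=4^{i}-(4^{i-1}+i-2)\geq 3\cdot4^{i-1}-i$ and $T_{k_{i}}\geq i/(4^{i}+i)$, whence $(\lambda_{k_{i}}-\lambda_{k_{i}-1})T_{k_{i}}\geq(3\cdot4^{i-1}-i)\,i/(4^{i}+i)$, which grows like $3i/4$ and is unbounded. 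So no proof from the bare hypothesis can exist. The finiteness of $M$ is exactly the condition for $\Lambda\in(\ell_{1}:\ell_{1})$ (equivalently, for $\ell_{1}\subset\ell_{1}^{\lambda}$), and it must either be assumed outright or deduced from a genuinely stronger hypothesis, e.g.\ $\liminf_{n}\lambda_{n+1}/\lambda_{n}>1$, under which your inequality $T_{k}\leq C/\lambda_{k}$ does hold by geometric comparison and your argument then closes in one line. As it stands, both your proposal and the lemma as quoted have a gap, and it is not one that a cleverer telescoping can repair.
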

\begin{thm}
If  $\left(1/\lambda_{n}\right)\in \ell_{1},$ then the inclusion $\ell_{p}\subset\ell_{p}^{\lambda}(\widehat{F})$ strictly holds.
\end{thm}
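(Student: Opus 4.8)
The plan is to read the inclusion through the factorisation $E=\Lambda\widehat{F}$ recorded in the preamble, i.e. $E_n(x)=\Lambda_n(\widehat{F}x)$, so that $x\in\ell_p^{\lambda}(\widehat{F})$ exactly when $\Lambda(\widehat{F}x)\in\ell_p$. Accordingly I would prove $\ell_p\subset\ell_p^{\lambda}(\widehat{F})$ in two steps: first that $\widehat{F}$ maps $\ell_p$ into itself, and then that $\Lambda$ maps $\ell_p$ into itself under the hypothesis $(1/\lambda_n)\in\ell_1$. For the first step, for $x=(x_k)\in\ell_p$ the sequence $y=\widehat{F}x$ has $y_n=\frac{f_n}{f_{n+1}}x_n-\frac{f_{n+1}}{f_n}x_{n-1}$, and using the bounds $f_k/f_{k+1}\le1$ and $f_{k+1}/f_k\le2$ already exploited in a previous theorem, together with the triangle inequality and shift-invariance of the $\ell_p$-norm, one gets $\|\widehat{F}x\|_p\le3\|x\|_p$, so $\widehat{F}x\in\ell_p$.

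For the second step I would show $\Lambda\in(\ell_p:\ell_p)$. Writing $\Lambda_n(y)=\sum_{k=0}^{n}\frac{\lambda_k-\lambda_{k-1}}{\lambda_n}y_k$ and noting that the weights $\frac{\lambda_k-\lambda_{k-1}}{\lambda_n}$ are nonnegative and sum to $1$ over $0\le k\le n$ (because $\sum_{k=0}^{n}(\lambda_k-\lambda_{k-1})=\lambda_n$), Jensen's inequality applied to the convex map $t\mapsto|t|^p$ gives
\[
|\Lambda_n(y)|^p\le\sum_{k=0}^{n}\frac{\lambda_k-\lambda_{k-1}}{\lambda_n}|y_k|^p.
\]
Summing over $n$, interchanging the order of summation, and invoking Lemma \ref{l11} in the form $(\lambda_k-\lambda_{k-1})\sum_{n=k}^{\infty}\frac1{\lambda_n}\le M$ yields $\|\Lambda y\|_p^p\le M\|y\|_p^p$. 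Combining the two steps, $Ex=\Lambda(\widehat{F}x)\in\ell_p$ for every $x\in\ell_p$, which is the inclusion $\ell_p\subset\ell_p^{\lambda}(\widehat{F})$.

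For strictness I would produce an explicit element of $\ell_p^{\lambda}(\widehat{F})\setminus\ell_p$ from the inverse triangle $E^{-1}=(g_{nk})$. Fix an index $m$ and let $w=(w_n)_n$ be the $m$th column of $E^{-1}$, that is $w_n=g_{nm}$; since $EE^{-1}=I$ with only finite sums involved, $Ew=e^{(m)}\in\ell_p$, whence $w\in\ell_p^{\lambda}(\widehat{F})$. On the other hand, for $n>m$ the formula for $g_{nm}$ factorises as $g_{nm}=\lambda_m f_{n+1}^2\,B_m$, where the bracket $B_m=\frac{1}{(\lambda_m-\lambda_{m-1})f_mf_{m+1}}-\frac{1}{(\lambda_{m+1}-\lambda_m)f_{m+1}f_{m+2}}$ does not depend on $n$. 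Because $f_{n+1}^2\to\infty$, the tail $\sum_{n>m}|g_{nm}|^p=(\lambda_m|B_m|)^p\sum_{n>m}f_{n+1}^{2p}$ diverges as soon as $B_m\ne0$, so $w\notin\ell_p$ and the inclusion is strict.

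The routine parts are the two norm estimates in the inclusion; the one point needing care is the choice of $m$ in the strictness argument, namely that $B_m\ne0$ for at least one $m$. This is where the standing assumption $\lambda_n\to\infty$ enters: if $B_m=0$ for all large $m$, then $(\lambda_{m+1}-\lambda_m)f_{m+1}f_{m+2}=(\lambda_m-\lambda_{m-1})f_mf_{m+1}$, forcing the increments $\lambda_{m+1}-\lambda_m$ to decay like the telescoping product $\prod f_j/f_{j+2}$, hence to be summable and $\lambda$ bounded — contradicting $\lambda_n\to\infty$. Thus infinitely many $B_m$ are nonzero and any such $m$ furnishes the required witness. Alternatively, one may mimic the earlier strict-inclusion proofs of the paper and take the sequence $x$ determined by prescribing $E_n(x)$ to be a fixed $\ell_p$-sequence, the same Fibonacci-squared growth of $E^{-1}$ preventing $x$ from lying in $\ell_p$.
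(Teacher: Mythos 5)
Your proposal is correct, and the two halves compare differently with the paper. On the inclusion half it is essentially the paper's own proof: the paper applies H\"older's inequality directly to $E_n(x)=\sum_{k=0}^{n}\frac{\lambda_k-\lambda_{k-1}}{\lambda_n}(\widehat{F}x)_k$ to get $|E_n(x)|^p\le\sum_{k=0}^{n}\frac{\lambda_k-\lambda_{k-1}}{\lambda_n}\bigl|(\widehat{F}x)_k\bigr|^p$, then uses Lemma~\ref{l11} and the bounds $f_k/f_{k+1}\le1$, $f_{k+1}/f_k\le2$; your factorization $\|Ex\|_p=\|\Lambda(\widehat{F}x)\|_p\le M^{1/p}\|\widehat{F}x\|_p\le 3M^{1/p}\|x\|_p$ is the same estimate packaged as two operator bounds, and if anything it is written more carefully: the paper's intermediate display applies Lemma~\ref{l11} before the sum over $n$ is taken, leaving the crucial interchange of summations implicit, and it treats $p=1$ only with a closing ``similarly,'' whereas your Jensen step is uniform in $1\le p<\infty$. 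The strictness half is where you genuinely improve on the paper. The paper produces a single witness $v$ with $Ev=e^{(0)}$, i.e.\ the zeroth column of $E^{-1}$ (its displayed formula for $v$, an eventually constant sequence, is in fact a misprint for that column), and simply asserts $v\notin\ell_p$. But that column equals $f_{n+1}^2\bigl[1-\frac{\lambda_0}{2(\lambda_1-\lambda_0)}\bigr]$ for $n\ge1$, and the bracket vanishes exactly when $2\lambda_1=3\lambda_0$; for such $\lambda$ the paper's witness is $e^{(0)}\in\ell_p$ and its strictness argument collapses. Your insistence on checking that $B_m\ne0$ for some $m$, and your proof of it (if $B_m=0$ for all $m\ge m_0$, telescoping gives $\lambda_{m+1}-\lambda_m=(\lambda_{m_0}-\lambda_{m_0-1})f_{m_0}f_{m_0+1}/(f_{m+1}f_{m+2})$, which is summable, so $\lambda$ would be bounded, contradicting $\lambda_n\to\infty$), is precisely the case analysis the paper omits, and it makes the strict inclusion valid for every admissible $\lambda$ rather than only for those $\lambda$ with $2\lambda_1\ne3\lambda_0$.
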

\begin{proof}
Let $x=(x_{k})\in \ell_{p}$ with $p>1.$ Then, by applying H\"{o}lder's inequality we have
\begin{eqnarray*}
\left|E_{n}(x)\right|&=&\left| \sum_{k=0}^{n}\left( \frac{\lambda_{k}-\lambda_{k-1}}{\lambda_{n}}\right)
\left( \frac{f_{k}}{f_{k-1}}x_{k}-\frac{f_{k+1}}{f_{k}}x_{k-1}\right)\right| \\
&\leq&\left[  \sum_{k=0}^{n}\left( \frac{\lambda_{k}-\lambda_{k-1}}{\lambda_{n}}\right)
 \left|\frac{f_{k}}{f_{k-1}}x_{k}-\frac{f_{k+1}}{f_{k}}x_{k-1} \right|^{p} \right]^{1/p}\left[\sum_{k=0}^{n}\left(\frac{\lambda_{k}-\lambda_{k-1}}{\lambda_{n}} \right)  \right] ^{1-1/p}
\end{eqnarray*}
which gives
\[ \left| E_{n}(x)\right|^{p}\leq \sum_{k=0}^{n}\left( \frac{\lambda_{k}-\lambda_{k-1}}{\lambda_{n}}\right)
 \left|\frac{f_{k}}{f_{k-1}}x_{k}-\frac{f_{k+1}}{f_{k}}x_{k-1} \right|^{p}.
\]
By Lemma \ref{l11} we have
\[ \left| E_{n}(x)\right|^{p}
\leq \sum_{k=0}^{n}M
 \left|\frac{f_{k}}{f_{k-1}}x_{k}-\frac{f_{k+1}}{f_{k}}x_{k-1} \right|^{p}
\]
and hence,
\[\| x \|^{p}_{\ell_{p}^{\lambda}(\widehat{F}) } \leq
M2^{2p-1}\left( \sum_{k} \left| x_{k}\right|^{p}
+ \sum_{k}\left| x_{k-1}\right|^{p}\right)
\leq M2^{2p}\| x \|^{p}_{\ell_{p}^{\lambda}}.
\]
Therefore, $\|x\|_{\ell_{p}^{\lambda}(\widehat{F}) }
\leq 4M^{1/p}\|x\|_{p}<\infty.$ This shows that $x\in\ell_{p}^{\lambda}(\widehat{F}).$ Hence the inclusion $\ell_{p}\subset\ell_{p}^{\lambda}(\widehat{F})$ holds for $p>1.$

Now, let us consider the sequence $v=\left( v_{k}\right) $ defined by
\begin{eqnarray*}\label{eq4}
v_{k}:=\left\{\begin{array}{ccl}
1&,& k=0, \\
f_{2}^{2}- \frac{\lambda_{0}f_{2}}{(\lambda_{1}-\lambda_{0})f_{1}} &,&k\geq1.
\end{array}\right.
\end{eqnarray*}
Then, we have $Ev=e^{(0)}\in \ell_{p}.$ Therefore, $v\in \ell_{p}^{\lambda}(\widehat{F})\setminus\ell_{p}$.
This means that the inclusion $\ell_{p}\subset \ell_{p}^{\lambda}(\widehat{F}) $ is strict.

Similarly, one can show that the inclusion $\ell_{1}\subset\ell_{1}^{\lambda}(\widehat{F}) $ also strictly holds.
\end{proof}

It is known from Theorem 2.3 of Jarrah and Malkowsky \cite{ajm} that if $T$ is a triangle then the domain
$\lambda_T$ of $T$ in a normed sequence space $\lambda$ has a basis if and only if
$\lambda$ has a basis. As a direct consequence of this fact, since  the transformation $T$ defined from $\ell^{\lambda}_{p}(\widehat{F})$ to $\ell_{p},$ is an isomorphism, the inverse image of the basis $\left\lbrace e^{(k)}\right\rbrace_{k\in\mathbb{N}_0}$ of the space $\ell_{p}$ is the basis for the new space $\ell^{\lambda}_{p}(\widehat{F})$ with $1\leq p<\infty,$ we have:
\begin{cor}
Define the sequence $b^{(k)}=\big\lbrace b_{n}^{(k)}\big\rbrace_{n\in\mathbb{N}_0}$ for every fixed $k\in\mathbb{N}_0$ by
\begin{eqnarray*}\label{c5}
b_{n}^{(k)}:=\left\{\begin{array}{ccl}
0&,&n < k, \\
\frac{\lambda_{k}f_{n+1}^{2}}{(\lambda_{k}-\lambda_{k-1})f_{k}f_{k+1}}& ,&n=k, \\
\frac{\lambda_{k}f_{n+1}^{2}}{(\lambda_{k}-\lambda_{k-1})f_{k}f_{k+1}}-\frac{\lambda_{k}f_{n+1}^{2}}{(\lambda_{k+1}-\lambda_{k})f_{k+1}f_{k+2}}&,&n>k.
\end{array}\right.
\end{eqnarray*}
Then, the following statements hold:
\begin{enumerate}
\item[(i)] The space $\ell^{\lambda}_{\infty}(\widehat{F})$ has no Schauder basis.
\item[(ii)] The sequence $\big\{b^{(k)}\big\}_{k\in\mathbb{N}_0}$ is a basis for the space $\ell^{\lambda}_{p}(\widehat{F})$ and every $x\in\ell^{\lambda}_{p}(\widehat{F})$ has a unique representation of the form $x=\sum\limits_{k}\alpha_{k}b^{(k)},$ where $\alpha_{k}=E_{k}(x)$ for all $k\in\mathbb{N}_0.$
\end{enumerate}
\end{cor}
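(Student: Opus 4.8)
The plan is to deduce both assertions from the isometric isomorphism $T\colon\ell^{\lambda}_{p}(\widehat{F})\to\ell_{p}$, $Tx=Ex$, established above, together with the general principle (the Jarrah--Malkowsky fact quoted just before the statement) that a topological isomorphism carries a Schauder basis onto a Schauder basis. Thus no new analysis is needed beyond transporting the standard structure of $\ell_{p}$ and $\ell_{\infty}$ through $E$ and $E^{-1}$.

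For part (i), I would argue by separability. Recalling from (\ref{c1}) that $\ell^{\lambda}_{\infty}(\widehat{F})=(\ell_{\infty})_{E}$ and that $E$ is a triangle, the quoted theorem gives that $\ell^{\lambda}_{\infty}(\widehat{F})$ possesses a Schauder basis if and only if $\ell_{\infty}$ does. Since $\ell_{\infty}$ is non-separable while a Banach space admitting a Schauder basis is necessarily separable, $\ell_{\infty}$ has no Schauder basis; hence neither does $\ell^{\lambda}_{\infty}(\widehat{F})$. Equivalently, any Schauder basis of $\ell^{\lambda}_{\infty}(\widehat{F})$ would be pushed forward by the isomorphism onto one of $\ell_{\infty}$, which cannot exist.

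For part (ii), the first step is to identify the candidate basis: I claim $b^{(k)}=T^{-1}(e^{(k)})=E^{-1}e^{(k)}$, i.e., $b^{(k)}$ is precisely the $k$th column of the inverse triangle $E^{-1}=(g_{nk})$. This is the one genuine verification in the proof, and it amounts to comparing the displayed formula for $b^{(k)}_{n}$ with the entries $g_{nk}$ recorded above: for $n<k$ both vanish, for $n=k$ both equal $\lambda_{k}f_{k+1}^{2}/[(\lambda_{k}-\lambda_{k-1})f_{k}f_{k+1}]$, and for $n>k$ both equal $\lambda_{k}f_{n+1}^{2}\big[\tfrac{1}{(\lambda_{k}-\lambda_{k-1})f_{k}f_{k+1}}-\tfrac{1}{(\lambda_{k+1}-\lambda_{k})f_{k+1}f_{k+2}}\big]$, so indeed $b^{(k)}_{n}=g_{nk}$. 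Because $\{e^{(k)}\}_{k\in\mathbb{N}_0}$ is the standard Schauder basis of $\ell_{p}$ and $T^{-1}=E^{-1}$ is a norm-preserving, hence continuous, linear isomorphism, its image $\{b^{(k)}\}_{k\in\mathbb{N}_0}$ is a Schauder basis of $\ell^{\lambda}_{p}(\widehat{F})$.

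Finally, to obtain the explicit expansion, take $x\in\ell^{\lambda}_{p}(\widehat{F})$ and put $y=Ex\in\ell_{p}$, so that $y_{k}=E_{k}(x)=\alpha_{k}$. Then $y=\sum_{k}\alpha_{k}e^{(k)}$ with convergence in $\|\cdot\|_{p}$, and applying the continuous map $T^{-1}$ to the partial sums yields $x=T^{-1}y=\sum_{k}\alpha_{k}b^{(k)}$ with convergence in $\|\cdot\|_{\ell^{\lambda}_{p}(\widehat{F})}$, exactly as claimed. Uniqueness of the coefficients transfers from the uniqueness of the unit-vector expansion in $\ell_{p}$ via the injectivity of $T$. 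I expect the only mildly delicate point to be the column identification $b^{(k)}=E^{-1}e^{(k)}$; once that bookkeeping against the given $g_{nk}$ is checked, everything else is an immediate transport of structure along the isometry $T$.
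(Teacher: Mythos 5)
Your proposal is correct and follows essentially the same route as the paper: the paper likewise invokes the Jarrah--Malkowsky theorem on matrix domains of triangles together with the isomorphism $T=E$, identifies $b^{(k)}$ as the inverse image $E^{-1}e^{(k)}$ of the standard basis of $\ell_{p}$, and (implicitly, via non-separability of $\ell_{\infty}$) concludes that $\ell^{\lambda}_{\infty}(\widehat{F})$ has no Schauder basis. Your write-up merely makes explicit the column check $b^{(k)}_{n}=g_{nk}$ and the transfer of the expansion and its uniqueness, which the paper leaves to the reader.
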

\begin{cor}
While the space $\ell_{p}^{\lambda}(\widehat{F})$ is separable but $\ell_{\infty}^{\lambda}(\widehat{F})$ is not separable.
\end{cor}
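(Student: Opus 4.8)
The plan is to treat the two assertions separately, in both cases transporting the corresponding property of the classical space across the norm-preserving isomorphism $T$ constructed in the earlier theorem.

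For the separability of $\ell_p^{\lambda}(\widehat{F})$ with $1\le p<\infty$, I would invoke the Schauder basis $\{b^{(k)}\}_{k\in\mathbb{N}_0}$ produced in the preceding corollary. It is a standard fact that every Banach space admitting a Schauder basis is separable: the set
\[
D=\Big\{\textstyle\sum_{k=0}^{m} r_k\,b^{(k)} : m\in\mathbb{N}_0,\ r_k\in\mathbb{Q}\Big\}
\]
is countable, and it is dense, since for $x=\sum_k \alpha_k b^{(k)}$ one first truncates the series (using the defining convergence of the basis expansion) and then replaces each coefficient $\alpha_k$ by a nearby rational, the resulting error being controlled by $\|b^{(k)}\|_{\ell_p^{\lambda}(\widehat{F})}$. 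Equivalently, and even more directly, since $T$ is a surjective linear isometry of $\ell_p^{\lambda}(\widehat{F})$ onto $\ell_p$ and $\ell_p$ is separable for $1\le p<\infty$, the image under $T^{-1}$ of a countable dense subset of $\ell_p$ is a countable dense subset of $\ell_p^{\lambda}(\widehat{F})$; that is, separability is invariant under the homeomorphism $T$.

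For the non-separability of $\ell_\infty^{\lambda}(\widehat{F})$ I would again use that $T$ is a surjective linear isometry, now from $\ell_\infty^{\lambda}(\widehat{F})$ onto $\ell_\infty$. Since $\ell_\infty$ is not separable and separability is preserved under homeomorphisms, $\ell_\infty^{\lambda}(\widehat{F})$ cannot be separable. To make this self-contained I would transport the classical $1$-separated uncountable family: for each subset $S\subseteq\mathbb{N}_0$ let $\chi_S\in\ell_\infty$ be its indicator sequence and set $x_S:=T^{-1}(\chi_S)=E^{-1}\chi_S$. Because $T$ is an isometry,
\[
\|x_S-x_{S'}\|_{\ell_\infty^{\lambda}(\widehat{F})}=\|\chi_S-\chi_{S'}\|_\infty=1
\qquad(S\neq S'),
\]
so $\{x_S : S\subseteq\mathbb{N}_0\}$ is an uncountable subset of $\ell_\infty^{\lambda}(\widehat{F})$ whose points are pairwise at distance $1$; no countable set can be dense in a space containing such a family, whence $\ell_\infty^{\lambda}(\widehat{F})$ is not separable.

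There is no computational difficulty here; the argument is short once the isometry $T$ is in hand. The one point requiring genuine care, and the place where a careless argument would go wrong, is the non-separable half: one must \emph{not} attempt to deduce non-separability from item (i) of the preceding corollary, namely that $\ell_\infty^{\lambda}(\widehat{F})$ has no Schauder basis, since separable Banach spaces lacking a Schauder basis do exist. The safe route is precisely the isometric transfer of (non-)separability described above.
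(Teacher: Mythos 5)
Your proof is correct and matches the argument the paper intends: the corollary is stated without proof as an immediate consequence of the preceding results, namely that $\ell_p^{\lambda}(\widehat{F})$ has a Schauder basis (equivalently, is isometrically isomorphic to the separable space $\ell_p$), while $\ell_\infty^{\lambda}(\widehat{F})$ is isometrically isomorphic to the non-separable space $\ell_\infty$, and (non-)separability transfers across the isometry $T=E$. Your additional caution --- that non-separability cannot be inferred merely from the absence of a Schauder basis --- is well placed, and your explicit $1$-separated family $\{E^{-1}\chi_S\}$ makes the transfer self-contained.
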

\section{ The $\alpha$-, $\beta$- and $\gamma$-duals of the spaces $\ell_{p}^{\lambda}(\widehat{F})$ and $\ell_{\infty}^{\lambda}(\widehat{F})$}
In this section, we determine the $\alpha$-,$\beta$- and $\gamma$-duals of the sequence spaces $\ell_{p}^{\lambda}(\widehat{F})$ and $\ell_{\infty}^{\lambda}(\widehat{F})$ of non-absolute type.

We assume throughout that the sequences $x=(x_{k})$ and $y=(y_{k})$ are connected by the relation (\ref{c2}). Let $A=(a_{nk})$ be an infinite matrix. Now, we may begin with quoting the following lemmas which are required for proving the next theorems.
\begin{lem} \cite{Michael}
$A=(a_{nk})  \in \left(\ell_{p}:\ell_{1}\right)$ if and only if
\begin{enumerate}
\item[(i)] For $1< p \leq \infty,$
\begin{eqnarray*}\label{eq5}
\sup_{K \in \mathcal{F}} \sum \limits_{k}\left| \sum_{n\in K}a_{nk}\right|^{q}< \infty.
\end{eqnarray*}
\item[(ii)] For $p=1,$
\begin{eqnarray*}\label{eq6}
\sup_{k \in \mathbb{N}_0}\sum_{n}\left| a_{nk}\right|<\infty.
\end{eqnarray*}
\end{enumerate}
\label{l1}
\end{lem}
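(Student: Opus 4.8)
The plan is to treat the two cases by the standard duality method for matrix classes: establish necessity and sufficiency separately, reducing everything to the dual identification $(\ell_p)^\ast\cong\ell_q$ for $1<p<\infty$ together with the norming of $\ell_1$ by finite sign patterns. Before that, I would record two preliminary observations that license the manipulations. Taking $K=\{n\}$ in condition (i) gives $\sum_k|a_{nk}|^q\le M<\infty$, so each row of $A$ lies in $\ell_q$ and hence, by H\"older's inequality, each series $A_n(x)=\sum_k a_{nk}x_k$ converges absolutely for every $x\in\ell_p$; this makes $Ax$ well defined and justifies the finite interchanges of summation below. For $p=1$, condition (ii) tested on $e^{(k)}$ shows each column is summable. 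Throughout I will use that for $y\in\omega$ one has $\|y\|_1=\sup\big\{\big|\sum_{n\in K}\theta_n y_n\big|:K\in\mathcal{F},\ \theta_n\in\{-1,1\}\big\}$, which converts $\ell_1$-membership of $Ax$ into a uniform bound over finite sets and signs.

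For sufficiency when $1<p\le\infty$, I would fix $x\in\ell_p$, a finite set $K$, and put $\theta_n=\operatorname{sgn}A_n(x)$. Interchanging the finite sum over $K$ with the absolutely convergent sums over $k$ gives $\sum_{n\in K}|A_n(x)|=\sum_k\big(\sum_{n\in K}\theta_n a_{nk}\big)x_k$, which by H\"older is at most $\big\|(\sum_{n\in K}\theta_n a_{nk})_k\big\|_q\,\|x\|_p$. Splitting $K$ into the indices where $\theta_n=+1$ and where $\theta_n=-1$ and applying the triangle inequality in $\ell_q$ bounds this $\ell_q$-norm by $2M^{1/q}$, where $M$ is the supremum in (i). Taking the supremum over $K$ yields $\|Ax\|_1\le 2M^{1/q}\|x\|_p<\infty$, so $Ax\in\ell_1$. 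The case $p=1$ is the simplest: Tonelli's theorem gives $\sum_n|A_n(x)|\le\sum_k|x_k|\sum_n|a_{nk}|\le\big(\sup_k\sum_n|a_{nk}|\big)\|x\|_1<\infty$.

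For necessity I would first note that $A\in(\ell_p:\ell_1)$ makes $L:x\mapsto Ax$ a linear map $\ell_p\to\ell_1$; since the coordinate functionals are continuous on both spaces, the closed graph theorem shows that $L$ is bounded. Then I fix $K\in\mathcal{F}$, set $b_k=\sum_{n\in K}a_{nk}$, and observe $\sum_k b_k x_k=\sum_{n\in K}A_n(x)$ for every $x\in\ell_p$, so $b$ induces a bounded linear functional on $\ell_p$ of norm at most $\|L\|$. For $1<p<\infty$ the identification $(\ell_p)^\ast\cong\ell_q$ gives $\big(\sum_k|b_k|^q\big)^{1/q}\le\|L\|$ uniformly in $K$, which is exactly (i); for $p=1$ the column test $x=e^{(k)}$ yields $\sup_k\sum_n|a_{nk}|\le\|L\|$, which is (ii).

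I expect the main obstacle to be the endpoint $p=\infty$, where $(\ell_\infty)^\ast$ is strictly larger than $\ell_1$, so the duality identification used for $1<p<\infty$ is unavailable. There I would argue directly: for a fixed $K$ and any finite index set $F$, choosing $x_k=\operatorname{sgn}b_k$ on $F$ and $0$ elsewhere gives a unit vector of $\ell_\infty$ with $\sum_{k\in F}|b_k|=\sum_k b_k x_k\le\|L\|$, and letting $F$ exhaust $\mathbb{N}_0$ yields $\sum_k|b_k|\le\|L\|$, i.e.\ condition (i) with $q=1$. The remaining points---verifying the absolute convergence that licenses each interchange and checking that the sign-splitting costs only the harmless factor $2$---are routine.
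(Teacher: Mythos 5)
The paper offers no proof of this lemma: it is quoted verbatim from Stieglitz and Tietz \cite{Michael} as a known characterization, so there is no in-paper argument to compare against line by line; what you have done is reconstruct the classical proof behind the citation, and your reconstruction is correct. Sufficiency is handled exactly as in the standard argument: the sign-pattern description of the $\ell_{1}$-norm, the finite interchange of summation (licensed by the absolute convergence you extract from condition (i) with $K=\{n\}$), H\"older's inequality, and the splitting of $K$ into the index sets where $\theta_{n}=\pm 1$, at the harmless cost of a factor $2$; the Tonelli argument at $p=1$ is likewise standard. Necessity via boundedness of $L_{A}$, the duality $(\ell_{p})^{\ast}\cong\ell_{q}$ for $1<p<\infty$, the column test $x=e^{(k)}$ at $p=1$, and the direct finitely supported sign-vector argument at $p=\infty$ (correctly circumventing the failure of $(\ell_{\infty})^{\ast}\cong\ell_{1}$) is also the standard route. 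The one step you compress too much is the closed-graph claim: continuity of the coordinate functionals on $\ell_{p}$ and $\ell_{1}$ alone does not yield $A_{n}(x_{j})\to A_{n}(x)$ when $x_{j}\to x$ in $\ell_{p}$, because $A_{n}$ is an infinite series, not a coordinate. One first needs each row to lie in $(\ell_{p})^{\beta}=\ell_{q}$ --- which does follow from the hypothesis that $A_{n}(x)$ converges for every $x\in\ell_{p}$, but only through a Banach--Steinhaus (or gliding-hump) argument applied to the partial-sum functionals --- after which H\"older gives continuity of each $A_{n}$ and the closed graph theorem applies. Equivalently, you may simply invoke the standard fact, recorded in this paper as Theorem \ref{o1} (quoted from \cite{cp3,cp6}), that every matrix map between FK spaces is automatically bounded; with that reference supplied, your proof is complete.
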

\begin{lem}\cite{Michael} The following statements hold:
\begin{enumerate}
\item[(i)] Let $1 < p <\infty.$ Then,
$A=(a_{nk})  \in \left(\ell_{p}:c \right)$ if and only if
\begin{eqnarray}\label{eq7}
&&\lim_{n\to\infty}a_{nk} \mbox{~exists~for~each~fixed~} k \in \mathbb{N}_0,%\nonumber
\\
&&\sup_{n\in\mathbb{N}_0}\sum_{k} \left| a_{nk}\right|^{q}<\infty.\label{eq8}
\end{eqnarray}
\item[(ii)] $A=(a_{nk})  \in \left(\ell_{1}:c\right)$ if and only if (\ref{eq7}) holds and
\begin{eqnarray}
\sup_{n,k \in \mathbb{N}_0}\left| a_{nk}\right|<\infty.
\label{eq9}
\end{eqnarray}
\item[(iii)] $A=(a_{nk})\in\left(\ell_{\infty}:c\right)$ if and only if
\begin{eqnarray*}\label{eq10}
&&\sup_{n\in\mathbb{N}_0}\sum_{k}\left|a_{nk}\right|<\infty,\\
&&\lim_{n\to\infty}\sum_{k}\left|a_{nk}-\lim_{n\to\infty}a_{nk}\right|=0.
\label{eq11}
\end{eqnarray*}
\end{enumerate}
\label{l2}
\end{lem}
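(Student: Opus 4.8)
The plan is to treat all three parts of Lemma \ref{l2} by the standard functional-analytic machinery for matrix maps into $c$, exploiting that convergence of $Ax$ for every admissible $x$ forces each row to define a continuous functional and forces the whole family of row-functionals to be pointwise bounded. The key preliminary observation is that, for fixed $n$, the map $x\mapsto A_{n}(x)=\sum_{k}a_{nk}x_{k}$ is a bounded linear functional on the relevant domain exactly when the row $(a_{nk})_{k}$ lies in the corresponding dual space, with functional norm equal to the dual-space norm of the row: $(a_{nk})_{k}\in\ell_{q}$ for domain $\ell_{p}$ with $1<p<\infty$ (norm $(\sum_{k}|a_{nk}|^{q})^{1/q}$), $(a_{nk})_{k}\in\ell_{\infty}$ for $\ell_{1}$ (norm $\sup_{k}|a_{nk}|$), and $(a_{nk})_{k}\in\ell_{1}$ for $\ell_{\infty}$ (norm $\sum_{k}|a_{nk}|$). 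This identification is precisely what converts the analytic conditions (\ref{eq8}), (\ref{eq9}) and the finiteness of $\sup_{n}\sum_{k}|a_{nk}|$ into boundedness of $\sup_{n}\|A_{n}\|$.

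For necessity I would first apply $A$ to the coordinate sequences $e^{(k)}$, which lie in every $\ell_{p}$ and in $\ell_{\infty}$; since $Ae^{(k)}=(a_{nk})_{n}$ must be in $c$, the pointwise limits $\alpha_{k}:=\lim_{n}a_{nk}$ exist, which is (\ref{eq7}). Next, because $(A_{n}(x))_{n}\in c$ is bounded in $n$ for each fixed $x$, the Banach--Steinhaus theorem gives $\sup_{n}\|A_{n}\|<\infty$; reading off the functional norm in each dual then yields (\ref{eq8}) in part (i), (\ref{eq9}) in part (ii), and $\sup_{n}\sum_{k}|a_{nk}|<\infty$ in part (iii). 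For sufficiency in parts (i) and (ii) I would first show that $(\alpha_{k})$ itself lies in the dual space, by taking $n\to\infty$ in the finite partial sums of the uniform bound via Fatou, so that $x\mapsto\sum_{k}\alpha_{k}x_{k}$ is a well-defined functional; then I would estimate $A_{n}(x)-\sum_{k}\alpha_{k}x_{k}=\sum_{k}(a_{nk}-\alpha_{k})x_{k}$ by splitting at a fixed index $N$. The tail is controlled by H\"older's inequality (respectively the sup bound) together with the smallness of $\sum_{k>N}|x_{k}|^{p}$ (respectively $\sum_{k>N}|x_{k}|$), which is available because the finitely supported sequences are dense in $\ell_{p}$ for $1\le p<\infty$, while the finite head tends to $0$ termwise since each $a_{nk}\to\alpha_{k}$.

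The main obstacle is part (iii), the case $A\in(\ell_{\infty}:c)$, because the finitely supported sequences are \emph{not} dense in $\ell_{\infty}$, so the clean head/tail splitting above is unavailable and the genuinely hard content is the necessity of the uniform condition $\lim_{n}\sum_{k}|a_{nk}-\alpha_{k}|=0$. I expect to establish this by contradiction through a gliding-hump construction: were the condition to fail, one could extract indices $n_{1}<n_{2}<\cdots$ and disjoint blocks of columns on which $\sum_{k}|a_{n_{j}k}-\alpha_{k}|$ stays bounded away from $0$, and then assemble a single sequence $x\in\ell_{\infty}$ with entries in $\{-1,+1\}$ chosen block by block to match the signs of $a_{n_{j}k}-\alpha_{k}$, forcing $(A_{n}(x))_{n}$ to oscillate and contradicting $Ax\in c$. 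Making the blocks disjoint while keeping each hump large enough to dominate the contributions from the other blocks is the delicate bookkeeping step. For sufficiency in part (iii), once $(\alpha_{k})\in\ell_{1}$ is secured from the uniform bound via Fatou, the estimate $|A_{n}(x)-\sum_{k}\alpha_{k}x_{k}|\le\|x\|_{\infty}\sum_{k}|a_{nk}-\alpha_{k}|$ is immediate. Since Lemma \ref{l2} is quoted from \cite{Michael}, I would ultimately cite it rather than reproduce the gliding-hump argument in full.
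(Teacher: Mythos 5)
The paper offers no proof of Lemma \ref{l2} to compare against: the result is quoted verbatim, with citation, from Stieglitz and Tietz \cite{Michael}, exactly as you anticipate in your closing sentence. Your outline is the standard proof of these Kojima--Schur/Schur-type characterizations, and it is essentially correct: necessity of (\ref{eq7}) from $Ae^{(k)}\in c$; necessity of (\ref{eq8}), (\ref{eq9}) and of $\sup_{n}\sum_{k}|a_{nk}|<\infty$ from Banach--Steinhaus applied to the row functionals; sufficiency in (i) and (ii) by putting $(\alpha_{k})$ in the dual via Fatou and splitting head and tail, using density of the finitely supported sequences; and, for the genuinely hard step, the necessity of $\lim_{n}\sum_{k}|a_{nk}-\alpha_{k}|=0$ in (iii) by a gliding-hump construction with a $\pm1$-valued test sequence (this is in substance Schur's theorem, and indeed no head/tail density argument is available in $\ell_{\infty}$). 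Two points deserve to be made explicit if you were to write this out. First, your ``key preliminary observation'' --- that convergence of $\sum_{k}a_{nk}x_{k}$ for every $x$ in the domain forces the row into the corresponding dual with equality of norms --- is itself a uniform-boundedness (Landau resonance) argument applied to the partial-sum functionals, so Banach--Steinhaus is really used twice: once along $k$ for each fixed row, then once along $n$. Second, in part (iii) the existence of the column limits $\lim_{n}a_{nk}$ is not among the displayed conditions but is implicitly required for the second condition to be meaningful; in the necessity direction it again comes from $Ae^{(k)}\in c$, and in the sufficiency direction it must be taken as part of the hypothesis. With those caveats your plan would compile into a complete proof, whereas the paper (reasonably) delegates it to the literature.
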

\begin{lem}\cite{Michael}
Let $1< p \leq \infty.$ Then, the following statements hold:
\begin{enumerate}
\item[(i)] $A=(a_{nk}) \in \left( \ell_{p}:\ell_{\infty} \right)$ if and only if (\ref{eq8}) holds.
\item[(ii)] $A=(a_{nk}) \in \left( \ell_{1}:\ell_{\infty} \right)$ if and only if (\ref{eq9}) holds.
\end{enumerate}
\label{l3}
\end{lem}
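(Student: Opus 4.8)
The plan is to treat the two statements of the lemma separately and, in each case, split the biconditional into a sufficiency part (an elementary estimate via H\"older's inequality) and a necessity part (a duality argument combined with the uniform boundedness principle). Throughout, I would write $a_{n}=(a_{nk})_{k}$ for the $n$th row of $A$ and recall that $q=p/(p-1)$, with the convention $q=1$ when $p=\infty$.

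For the sufficiency in (i), I would assume (\ref{eq8}) and fix $x=(x_{k})\in\ell_{p}$. Applying H\"older's inequality to each row gives $|A_{n}(x)|\leq\left(\sum_{k}|a_{nk}|^{q}\right)^{1/q}\|x\|_{p}$ (with the obvious modification $\sum_{k}|a_{nk}|\cdot\|x\|_{\infty}$ in the endpoint case $p=\infty$), so that taking the supremum over $n$ yields $\|Ax\|_{\infty}\leq M^{1/q}\|x\|_{p}$, where $M$ denotes the finite supremum appearing in (\ref{eq8}). Hence $Ax\in\ell_{\infty}$ and $A\in(\ell_{p}:\ell_{\infty})$. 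For the necessity, I would first note that since the series $A_{n}(x)=\sum_{k}a_{nk}x_{k}$ converges for every $x\in\ell_{p}$, the row $a_{n}$ lies in the $\beta$-dual of $\ell_{p}$, which is $\ell_{q}$; consequently each $A_{n}$ is a bounded linear functional on $\ell_{p}$ with $\|A_{n}\|=\|a_{n}\|_{q}$. Because $A\in(\ell_{p}:\ell_{\infty})$, the scalars $A_{n}(x)$ are bounded in $n$ for every fixed $x$, that is $\sup_{n}|A_{n}(x)|<\infty$. The Banach--Steinhaus theorem, applied to the family $\{A_{n}\}$ of functionals on the Banach space $\ell_{p}$, then forces $\sup_{n}\|A_{n}\|=\sup_{n}\|a_{n}\|_{q}<\infty$, which is exactly (\ref{eq8}) after raising to the $q$th power.

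Statement (ii) follows along the same lines with $p=1$: sufficiency is the estimate $|A_{n}(x)|\leq(\sup_{j,k}|a_{jk}|)\,\|x\|_{1}$, whence $\|Ax\|_{\infty}\leq(\sup_{j,k}|a_{jk}|)\,\|x\|_{1}$, while for necessity one uses that the dual of $\ell_{1}$ is $\ell_{\infty}$, so that $\|A_{n}\|=\sup_{k}|a_{nk}|$, and the uniform boundedness principle again gives $\sup_{n}\|A_{n}\|=\sup_{n,k}|a_{nk}|<\infty$, which is (\ref{eq9}).

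I expect the main obstacle to be the necessity direction---specifically, justifying that convergence of each row series for all $x\in\ell_{p}$ already places $a_{n}$ in $\ell_{q}$, so that the functionals are genuinely bounded and Banach--Steinhaus is applicable, and handling the endpoint $p=\infty$, where one works with $q=1$ and the relevant duality pairs the finitely supported sequences in $\ell_{\infty}$ against $\ell_{1}$. Once the rows are known to be bounded functionals, the uniform boundedness principle does all the remaining work, and the sufficiency estimates are routine.
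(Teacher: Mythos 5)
Your proof is correct. Note, however, that the paper offers no proof of this statement at all: it is quoted verbatim as a known characterization from Stieglitz--Tietz \cite{Michael}, so there is no internal argument to compare against. Your route --- H\"older's inequality for sufficiency, and for necessity the observation that each row lies in the $\beta$-dual of $\ell_{p}$ (namely $\ell_{q}$, respectively $\ell_{1}$ when $p=\infty$) so that the rows define bounded functionals to which the Banach--Steinhaus theorem applies --- is exactly the standard proof of this classical result, and it is also the pattern the authors themselves use later when they prove the necessity part of Theorem 4.4: there they verify pointwise boundedness of the row functionals on $\ell_{p}^{\lambda}(\widehat{F})$ and invoke Banach--Steinhaus to get the uniform bound $\sup_{n}\|E_{n}\|_{q}<\infty$. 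The only step you flag as delicate, that convergence of $\sum_{k}a_{nk}x_{k}$ for all $x\in\ell_{p}$ forces $(a_{nk})_{k}\in\ell_{q}$ (and $(a_{nk})_{k}\in\ell_{1}$ when $p=\infty$), is indeed the classical identification of the $\beta$-duals $\ell_{p}^{\beta}=\ell_{q}$ and $\ell_{\infty}^{\beta}=\ell_{1}$; citing that identification (or proving it by a gliding-hump or uniform-boundedness argument on the partial-sum functionals) closes the argument completely.
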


\begin{thm}
Define the sets $d_{1}$ and $d_{2}$ by
\begin{eqnarray*}
d_{1}&:=&\left\lbrace a=(a_{k}) \in \omega :  \sup_{K \in \mathcal{F}} \sum \limits_{k}\left| \sum_{n \in K} b_{nk}\right|^{q} < \infty
 \right\rbrace, \\
d_{2} &:=&\left\lbrace a=(a_{k}) \in \omega :  \sup_{k \in \mathbb{N}_0} \sum \limits_{n}\left| b_{nk}\right| < \infty
  \right\rbrace,
\end{eqnarray*}
where the matrix $B=(b_{nk})$ is defined via the sequence $a=(a_{n})\in\omega$ by
\begin{eqnarray*}
b_{nk}:=\left\{\begin{array}{ccl}
\left[\frac{\lambda_{k}f_{n+1}^{2}}{(\lambda_{k}-\lambda_{k-1})f_{k}f_{k+1}}-\frac{\lambda_{k}f_{n+1}^{2}}{(\lambda_{k+1}-\lambda_{k})f_{k+1}f_{k+2}}\right] a_{n}&,&k < n, \\
\frac{\lambda_{k}f_{n+1}^{2}}{(\lambda_{k}-\lambda_{k-1})f_{k}f_{k+1}}&,&k=n, \\
0&,&k>n
\end{array}\right.
\end{eqnarray*}
for all $n,k\in\mathbb{N}_0.$ Then $\big[\ell_{1}^{\lambda}(\widehat{F})\big]^{\alpha}=d_{2}$ and $\big[\ell_{p}^{\lambda}(\widehat{F})\big]^{\alpha}=d_{1}.$
\end{thm}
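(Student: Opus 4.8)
The plan is to reduce the computation of the $\alpha$-dual to a matrix-class membership problem via the isomorphism $T:\ell_{p}^{\lambda}(\widehat{F})\to\ell_{p}$, $x\mapsto y=Ex$, and then to invoke the characterization of $(\ell_{p}:\ell_{1})$ recorded in Lemma \ref{l1}. By definition, $a=(a_{k})$ belongs to $[\ell_{p}^{\lambda}(\widehat{F})]^{\alpha}$ if and only if $(a_{k}x_{k})\in\ell_{1}$ for every $x\in\ell_{p}^{\lambda}(\widehat{F})$. Since $x\in\ell_{p}^{\lambda}(\widehat{F})$ precisely when $y=Ex\in\ell_{p}$, I would rewrite each product $a_{n}x_{n}$ as a linear expression in the free variable $y$, thereby exhibiting $(a_{n}x_{n})_{n}$ as the image $By$ of $y$ under a suitable triangle-like matrix $B$.

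First I would substitute the inversion relation expressing $x_{n}$ in terms of $y$ connected by (\ref{c2}); equivalently, I would use the basis representation $x=\sum_{k}\alpha_{k}b^{(k)}$ with $\alpha_{k}=E_{k}(x)=y_{k}$ furnished by the Corollary, so that $x_{n}=\sum_{k}b_{n}^{(k)}y_{k}$. Multiplying by $a_{n}$ and collecting the coefficient of each $y_{k}$ gives $a_{n}x_{n}=\sum_{k}b_{nk}y_{k}$, where $b_{nk}=a_{n}b_{n}^{(k)}$. Writing this out, the diagonal term $k=n$ contributes $\frac{\lambda_{k}f_{n+1}^{2}}{(\lambda_{k}-\lambda_{k-1})f_{k}f_{k+1}}\,a_{n}$, the off-diagonal terms $k<n$ contribute the bracketed telescoping difference of the two Fibonacci/$\lambda$ terms times $a_{n}$, and $b_{nk}=0$ for $k>n$; this is exactly the matrix $B=(b_{nk})$ in the statement. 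Consequently $a\in[\ell_{p}^{\lambda}(\widehat{F})]^{\alpha}$ if and only if $By\in\ell_{1}$ for every $y\in\ell_{p}$, i.e.\ $B\in(\ell_{p}:\ell_{1})$.

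It then remains to transcribe Lemma \ref{l1}. For $1<p<\infty$, part (i) of that lemma yields $\sup_{K\in\mathcal{F}}\sum_{k}\big|\sum_{n\in K}b_{nk}\big|^{q}<\infty$, which is precisely the defining condition of $d_{1}$, so that $[\ell_{p}^{\lambda}(\widehat{F})]^{\alpha}=d_{1}$; for $p=1$, part (ii) yields $\sup_{k}\sum_{n}|b_{nk}|<\infty$, the defining condition of $d_{2}$, so that $[\ell_{1}^{\lambda}(\widehat{F})]^{\alpha}=d_{2}$.

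I expect the only genuine work to lie in the coefficient bookkeeping of the second paragraph: one must carefully reindex the double sum in the inversion formula, check that the endpoint $j=k$ produces the diagonal entry while the interior indices $j$ produce the telescoping difference, and confirm that no contribution survives for $k>n$. Once the identity $a_{n}x_{n}=\sum_{k}b_{nk}y_{k}$ is established with the stated $B$, the passage through the isomorphism and the appeal to Lemma \ref{l1} are purely formal, so the correct identification of $B$ is the crux of the argument.
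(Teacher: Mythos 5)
Your proposal is correct and is essentially the paper's own argument: the paper likewise rewrites $a_{n}x_{n}=B_{n}(y)$ by substituting the inversion formula expressing $x_{n}$ in terms of $y$ (your identification $b_{nk}=a_{n}b_{n}^{(k)}$ is the same computation, since the basis entries $b_{n}^{(k)}$ are precisely the entries of $E^{-1}$, and the telescoping/endpoint bookkeeping you describe is exactly how the paper's double-sum identity collects into the stated $B$), and then it applies Lemma \ref{l1} to get $[\ell_{p}^{\lambda}(\widehat{F})]^{\alpha}=d_{1}$ and $[\ell_{1}^{\lambda}(\widehat{F})]^{\alpha}=d_{2}$. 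Incidentally, your diagonal entry correctly carries the factor $a_{n}$; its omission in the theorem's displayed matrix $B$ is a typo in the paper, as the identity in the paper's proof confirms.
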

\begin{proof}
Let $a=(a_{n})\in\omega.$ Then, we immediately derive by (\ref{c2}) that
\begin{eqnarray}\label{c16}
a_{n}x_{n}=\sum_{k=0}^{n}\sum_{j=k-1}^{k}(-1)^{k-j}\frac{\lambda_{j}f_{n+1}^{2}}{(\lambda_{k}-\lambda_{k-1})f_{k}f_{k+1}}a_{n}y_{j}=B_{n}(y)
\end{eqnarray}
for all $n\in\mathbb{N}_0.$ Thus, we observe by (\ref{c16}) that $ax=\left( a_{n}x_{n}\right)\in\ell_{1}$ when $x=(x_{k})\in \ell_{p}^{\lambda}(\widehat{F}) $ if and only if $By\in\ell_{1}$ when $y=(y_{k})\in\ell_{p},$ i.e., $a=(a_{n})$ is in the $\alpha$-dual of the space $\ell_{p}^{\lambda}(\widehat{F})$ if and only if $B \in\left(\ell_{p}:\ell_{1}\right).$ Therefore, we see by Lemma \ref{l1} that
$a \in\big[\ell_{p}^{\lambda}(\widehat{F})\big]^{\alpha} $ iff
$$ \sup_{K \in \mathcal{F}} \sum_{k}\left| \sum_{n \in
K}b_{nk}\right|^{q}< \infty$$ which gives that
$\big[\ell_{p}^{\lambda}(\widehat{F})\big]^{\alpha}=d_{1}.$

Similarly, we get from (\ref{c16}) that
$a \in\big[\ell_{1}^{\lambda}(\widehat{F})\big]^{\alpha} $ if and only if $B \in \left( \ell_{1}: \ell_{1}\right)$ which is equivalent to
\[\sup_{k \in \mathbb{N}_0} \sum \limits_{n}\left| b_{nk}\right| < \infty .\]
This leads us to the desired result that $\big[\ell_{1}^{\lambda}(\widehat{F})\big]^{\alpha}=d_{2}.$
\end{proof}

\begin{thm}\label{T1}
Define the sets $d_{3},d_{4},d_{5},d_{6},d_{7}$ and $d_{8}$ by
\begin{eqnarray*}
d_{3}&:=&\left\lbrace a=(a_{k}) \in \omega :  \sum_{j=k+1}^{\infty}  a_{j} f_{j+1}^{2} \mbox{~exists~for~each~} k\in \mathbb{N}_0   \right\rbrace,\\
d_{4}&:=&\left\lbrace a=(a_{k}) \in \omega : \sup_{n\in \mathbb{N}_0} \sum_{k=0}^{n-1}\left| \bar{a}_{k}(n)\right|^{q}< \infty \right\rbrace , \\
d_{5}&:=&\left\lbrace a=(a_{k}) \in \omega : \sup_{n \in \mathbb{N}_0}\left| \frac{\lambda_{n}f^{2}_{n+1}}{(\lambda_{n}-\lambda_{n-1})f_{n}f_{n+1}}{a}_{n}\right|< \infty \right\rbrace , \\
d_{6}&:=&\left\lbrace a=(a_{k}) \in \omega : \sup_{n,k\in \mathbb{N}_0} \left| \bar{a}_{k}(n)\right|< \infty \right\rbrace , \\
d_{7}&:=&\left\lbrace a=(a_{k}) \in \omega :
\lim\limits_{n \rightarrow \infty} \sum_{k}\left| \bar{a}_{k}(n)-\bar{a}_{k}\right|=0 \right\rbrace,\\
d_{8}&:=&\left\lbrace a=(a_{k}) \in \omega : \sup_{n\in \mathbb{N}_0} \sum_{k}\left| \bar{a}_{k}(n)\right|< \infty \right\rbrace,
\end{eqnarray*}
where
\begin{eqnarray*}
\bar{a}_{k}(n)=\lambda_{k}\left\{\frac{a_{k}f_{k+1}^{2}}{(\lambda_{k}-\lambda_{k-1})f_{k}f_{k+1}}+
 \left[\frac{1}{(\lambda_{k}-\lambda_{k-1})f_{k}f_{k+1}}- \frac{1}{(\lambda_{k+1}-\lambda_{k})f_{k+1}f_{k+2}}\right]\sum_{j=k+1}^{n}f_{j+1}^{2}a_{j}
 \right\},~~k<n
\end{eqnarray*}
and $\bar{a}_{k}=\lim\limits_{n\rightarrow \infty} \bar{a}_{k}(n).$ Then $\big[\ell_{p}^{\lambda}(\widehat{F})\big]^{\beta}=d_{3}\cap d_{4}\cap d_{5},
\big[\ell_{1}^{\lambda}(\widehat{F})\big]^{\beta}=d_{3}\cap d_{5}\cap d_{6}~\mbox{and}~
 \big[\ell_{\infty}^{\lambda}(\widehat{F})\big]^{\beta}=d_{4}\cap d_{7}\cap d_{8},$ where $1<p<\infty.$
\end{thm}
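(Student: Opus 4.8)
The plan is to reduce the computation of each $\beta$-dual to a known matrix-class characterization, exactly as was done for the $\alpha$-duals in the previous theorem. The starting point is the relation $x=E^{-1}y$ from \eqref{c2}, where $y=Ex$. For a sequence $a=(a_k)\in\omega$, I would compute the partial sums $\sum_{k=0}^{n}a_k x_k$ by substituting $x_k$ in terms of $y$ via the inverse matrix $E^{-1}=(g_{nk})$ (whose entries are displayed in the excerpt). After interchanging the order of summation, this produces an identity of the form $\sum_{k=0}^{n}a_k x_k = \sum_{k}\bar{a}_k(n)\,y_k$, where $\bar{a}_k(n)$ is precisely the coefficient given in the statement. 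The essential point is that $a\in X^{\beta}$ iff the sequence $\big(\sum_{k=0}^{n}a_k x_k\big)_n$ converges for every $x\in X$, which by the identity is equivalent to requiring that the triangle-like matrix $\bar{A}=\big(\bar{a}_k(n)\big)$ maps the corresponding $\ell_p$, $\ell_1$ or $\ell_\infty$ into $c$.

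Next I would isolate the convergence of the tail series. The coefficient $\bar{a}_k(n)$ involves the finite sum $\sum_{j=k+1}^{n}f_{j+1}^2 a_j$; for $\bar{a}_k=\lim_{n\to\infty}\bar{a}_k(n)$ to exist for each fixed $k$, one needs $\sum_{j=k+1}^{\infty}a_j f_{j+1}^2$ to converge for each $k$, which is exactly the condition defining $d_3$. This gives the common factor $d_3$ appearing in all three $\beta$-duals. With $d_3$ in hand, the problem becomes a clean matrix transformation problem: $a\in\big[\ell_p^{\lambda}(\widehat{F})\big]^{\beta}$ iff $\bar{A}\in(\ell_p:c)$, $a\in\big[\ell_1^{\lambda}(\widehat{F})\big]^{\beta}$ iff $\bar{A}\in(\ell_1:c)$, and $a\in\big[\ell_\infty^{\lambda}(\widehat{F})\big]^{\beta}$ iff $\bar{A}\in(\ell_\infty:c)$.

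I would then apply Lemma \ref{l2} termwise. For $1<p<\infty$, part (i) of Lemma \ref{l2} supplies conditions \eqref{eq7} and \eqref{eq8}; condition \eqref{eq7} (existence of $\lim_n \bar{a}_k(n)=\bar{a}_k$) is again encoded by $d_3$, while condition \eqref{eq8} ($\sup_n\sum_k|\bar{a}_k(n)|^q<\infty$) splits into the off-diagonal part $d_4$ and the diagonal part $d_5$, since the $k=n$ entry of $\bar{A}$ is $\tfrac{\lambda_n f_{n+1}^2}{(\lambda_n-\lambda_{n-1})f_n f_{n+1}}a_n$ and must be controlled separately; this yields $d_3\cap d_4\cap d_5$. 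For $p=1$, part (ii) of Lemma \ref{l2} gives \eqref{eq7} together with the uniform boundedness \eqref{eq9}, producing $d_3\cap d_5\cap d_6$ after separating the diagonal entry. For $p=\infty$, part (iii) of Lemma \ref{l2} gives the two conditions $\sup_n\sum_k|\bar{a}_k(n)|<\infty$ and $\lim_n\sum_k|\bar{a}_k(n)-\bar{a}_k|=0$, which are $d_8$ and $d_7$; combining with the convergence requirement $d_3$ yields $d_4\cap d_7\cap d_8$ as stated (here $d_4$ records the finiteness forced by the $q=1$ norm estimate).

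The main obstacle will be the bookkeeping in the double-summation interchange that produces $\bar{a}_k(n)$: one must carefully track the three cases ($n<k$, $n=k$, $n>k$) of the inverse matrix $E^{-1}$ and correctly separate the diagonal contribution from the off-diagonal tail $\sum_{j=k+1}^{n}f_{j+1}^2 a_j$, so that the resulting coefficient matches the displayed $\bar{a}_k(n)$ exactly. Once that identity is verified, the remaining work is a routine invocation of Lemma \ref{l2} case by case, with care taken only to absorb the diagonal term into the correct auxiliary set ($d_5$ or $d_6$) in each regime of $p$.
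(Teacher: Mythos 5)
Your proposal is correct and follows essentially the same route as the paper: substitute $x=E^{-1}y$ into the partial sums $\sum_{k=0}^{n}a_kx_k$, interchange summations to obtain the triangle $T=(t_{nk})$ with off-diagonal entries $\bar{a}_k(n)$ and diagonal entries $\frac{\lambda_nf_{n+1}^2}{(\lambda_n-\lambda_{n-1})f_nf_{n+1}}a_n$, observe that $a\in X^{\beta}$ iff $T\in(\ell_p:c)$ (resp.\ $(\ell_1:c)$, $(\ell_\infty:c)$), and then read off the conditions from Lemma \ref{l2}, with the diagonal contribution isolated as $d_5$ and the tail-series convergence as $d_3$. This matches the paper's proof step for step, including the careful separation of the diagonal term in each regime of $p$.
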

\begin{proof}
Let $a=(a_{k})\in \omega$ and consider the equality
\begin{eqnarray*}
\sum_{k=0}^{n} a_{k}x_{k}&=&\sum_{k=0}^{n} \left\lbrace \sum_{j=0}^{k}
\left[\sum_{i=j-1}^{j} (-1)^{j-i}
\frac{\lambda_{i}f_{k+1}^{2}}{(\lambda_{j}-\lambda_{j-1})f_{j}f_{j+1}}y_{i}\right]\right\rbrace a_{k} \\
&=&\sum_{k=0}^{n-1} \lambda_{k}\left\{\frac{a_{k}f_{k+1}^{2}}{(\lambda_{k}-\lambda_{k-1})f_{k}f_{k+1}}+\left[ \frac{1}{(\lambda_{k}-\lambda_{k-1})f_{k}f_{k+1}}-
\frac{1}{(\lambda_{k+1}-\lambda_{k})f_{k+1}f_{k+2}}\right]\sum_{j=k+1}^{n}f_{j+1}^{2}a_{j} \right\}y_{k}\\ && +
\frac{\lambda_{n}f_{n+1}^{2}}{(\lambda_{n}-\lambda_{n-1})f_{n}f_{n+1}}a_{n}y_{n} \\
&=&\sum_{k=0}^{n-1}\bar{a}_{k}(n)y_{k}+
\frac{\lambda_{n}f_{n+1}^{2}}{(\lambda_{n}-\lambda_{n-1})f_{n}f_{n+1}}a_{n}y_{n}\\
&=&T_{n}(y) \mbox{~for~ all~} n\in\mathbb{N}_0,
\end{eqnarray*}
 where $T=\left(t_{nk}\right) $ is defined by
\begin{eqnarray*}
t_{nk}:=\left\{\begin{array}{ccl}
\bar{a}_{k}(n)&,&k < n, \\
\frac{\lambda_{n}f_{n+1}^{2}}{(\lambda_{n}-\lambda_{n-1})f_{n}f_{n+1}}&,&k=n, \\
0&,&k>n
\end{array}\right.
\end{eqnarray*}
for all $n,k \in \mathbb{N}_0.$ Then we have $ax=(a_{k}x_{k})\in cs$ whenever $x=(x_{k}) \in \ell_{p}^{\lambda}(\widehat{F})$ if and only if $Ty \in c$ whenever $y=(y_{k}) \in \ell_{p}.$ Therefore $a=(a_{k})\in\big[\ell_{p}^{\lambda}(\widehat{F})\big]^{\beta}$ if and only if $T \in \left( \ell_{p} : c\right)$ with $1\leq p \leq \infty.$ Then, we derive by using Lemma \ref{l2} for $1<p<\infty$ that
\begin{eqnarray*}\label{c17}
&&\sum_{j=k+1}^{\infty} a_{j}f_{j+1}^{2} ~\textrm{ exists for each }~k\in\mathbb{N}_0,\\
\label{c18}
&&\sup_{n\in\mathbb{N}_0}\sum_{k=0}^{n-1}\left| \bar{a}_{k}(n)\right|^{q} < \infty,\\
\label{c19}
&&\sup_{n\in\mathbb{N}_0} \left| \frac{\lambda_{n}f_{n+1}^{2}}{(\lambda_{n}-\lambda_{n-1})f_{n}f_{{n+1}}}a_{n}\right| < \infty.
\end{eqnarray*}
Therefore we conclude that $\big[\ell_{p}^{\lambda}(\widehat{F})\big]^{\beta}
=d_{3}\cap d_{4}\cap d_{5} ~\mbox{for}~ 1<p<\infty. $  \\
Similarly, for $p=1~\mbox{and}~ p=\infty$ we can see by using Parts (ii) and (iii) of Lemma \ref{l2} that $\big[\ell_{1}^{\lambda}(\widehat{F})\big]^{\beta}=d_{3}\cap d_{5}\cap d_{6}$ and $\big[\ell_{\infty}^{\lambda}(\widehat{F})\big]^{\beta}=d_{4}\cap d_{7}\cap d_{8}.$ This completes the proof.
\end{proof}

\begin{thm}
Let $1< p \leq \infty.$ Then we have: $\big[\ell_{1}^{\lambda}(\widehat{F})\big]^{\gamma}= d_{5}\cap d_{6}$ and $\big[\ell_{p}^{\lambda}(\widehat{F})\big]^{\gamma}=d_{5}\cap d_{8}.$
\end{thm}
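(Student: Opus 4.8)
The plan is to repeat verbatim the reduction used to establish Theorem~\ref{T1}, changing only the target space from $c$ to $\ell_\infty$, since the $\gamma$-dual is governed by $bs$ rather than by $cs$. For $a=(a_k)\in\omega$ and $x=(x_k)$ with $y=Ex$, I would reuse the finite summation identity
\[
\sum_{k=0}^{n}a_kx_k=\sum_{k=0}^{n-1}\bar a_k(n)\,y_k+\frac{\lambda_nf_{n+1}^2}{(\lambda_n-\lambda_{n-1})f_nf_{n+1}}a_n\,y_n=T_n(y),
\]
already derived there, where $T=(t_{nk})$ is the triangle with $t_{nk}=\bar a_k(n)$ for $k<n$ and diagonal entry $t_{nn}=\frac{\lambda_nf_{n+1}^2}{(\lambda_n-\lambda_{n-1})f_nf_{n+1}}a_n$. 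Since $x\mapsto y=Ex$ is a bijection of $\ell_p^\lambda(\widehat F)$ onto $\ell_p$ (and of $\ell_1^\lambda(\widehat F)$ onto $\ell_1$), letting $x$ range over the domain is the same as letting $y$ range over $\ell_p$ (resp.\ $\ell_1$).

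The core step is then the observation that $ax=(a_kx_k)\in bs$ for every $x\in\ell_p^\lambda(\widehat F)$ exactly when the sequence $\big(T_n(y)\big)_n$ is bounded for every $y\in\ell_p$; that is,
\[
a\in\big[\ell_p^\lambda(\widehat F)\big]^\gamma\iff T\in(\ell_p:\ell_\infty),\qquad a\in\big[\ell_1^\lambda(\widehat F)\big]^\gamma\iff T\in(\ell_1:\ell_\infty).
\]
From here I would simply read off the matrix conditions from Lemma~\ref{l3}. For $1<p\le\infty$, part~(i) characterizes $T\in(\ell_p:\ell_\infty)$ by $\sup_n\sum_k|t_{nk}|^q<\infty$; splitting the row into its diagonal and off-diagonal parts as $\sum_k|t_{nk}|^q=\sum_{k=0}^{n-1}|\bar a_k(n)|^q+|t_{nn}|^q$, the uniform finiteness of $|t_{nn}|^q$ is equivalent to $d_5$ (as $\sup_n|t_{nn}|^q<\infty\Leftrightarrow\sup_n|t_{nn}|<\infty$) and the off-diagonal block delivers the summability condition $d_8$. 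For $p=1$, part~(ii) characterizes $T\in(\ell_1:\ell_\infty)$ by $\sup_{n,k}|t_{nk}|<\infty$, whose diagonal part is again $d_5$ and whose off-diagonal part is $\sup_{k<n}|\bar a_k(n)|<\infty=d_6$. This yields $\big[\ell_p^\lambda(\widehat F)\big]^\gamma=d_5\cap d_8$ and $\big[\ell_1^\lambda(\widehat F)\big]^\gamma=d_5\cap d_6$, as claimed.

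I do not expect any serious obstacle: all the laborious algebra — the explicit shape of $\bar a_k(n)$ and the verification of $\sum_{k=0}^{n}a_kx_k=T_n(y)$ — is inherited directly from the proof of Theorem~\ref{T1}, and the only genuinely new input is replacing the matrix class $(\,\cdot:c)$ by $(\,\cdot:\ell_\infty)$, which drops the column-limit requirement (\ref{eq7}) while retaining the boundedness requirement. The one point demanding a little care is the clean separation of the diagonal entry $t_{nn}$ from the off-diagonal block when splitting the supremum, so that the two independent conditions emerge; this is routine because every summand is nonnegative and $\max\{A,B\}\le A+B\le2\max\{A,B\}$.
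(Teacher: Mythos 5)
Your proposal is correct and takes exactly the same route as the paper: the paper's entire proof of this theorem is the single remark that the result is obtained ``in the similar way used in the proof of Theorem~\ref{T1} with Lemma~\ref{l3} instead of Lemma~\ref{l2},'' and your write-up simply fills in that reduction explicitly (the identity $\sum_{k=0}^{n}a_{k}x_{k}=T_{n}(y)$, the equivalence of $ax\in bs$ on the domain with $T\in(\ell_{p}:\ell_{\infty})$, and the diagonal/off-diagonal splitting of the rows of $T$).

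One caveat worth recording: for $1<p<\infty$ the off-diagonal condition that Lemma~\ref{l3}(i) actually produces is $\sup_{n}\sum_{k=0}^{n-1}\left|\bar{a}_{k}(n)\right|^{q}<\infty$, which is the defining condition of $d_{4}$, not of $d_{8}$ (whose defining sum carries no exponent $q$); the two coincide only at $p=\infty$, where $q=1$, so strictly speaking your argument establishes $\big[\ell_{p}^{\lambda}(\widehat{F})\big]^{\gamma}=d_{4}\cap d_{5}$ for $1<p<\infty$ and $\big[\ell_{\infty}^{\lambda}(\widehat{F})\big]^{\gamma}=d_{5}\cap d_{8}$ --- but this conflation of $d_{4}$ with $d_{8}$ is already present in the paper's own statement of the theorem, so it is an imprecision you inherited rather than a flaw in your reasoning.
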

\begin{proof}
This is obtained in the similar way used in the proof of Theorem \ref{T1} with Lemma \ref{l3} instead of Lemma \ref{l2}.
\end{proof}
\section{ Some matrix transformations on the sequence spaces $ \ell_{p}^{\lambda}(\widehat{F})$ and
$ \ell_{\infty}^{\lambda}(\widehat{F})$}

In this section, we characterize the classes
$ \left( \ell_{p}^{\lambda}(\widehat{F}): \ell_{\infty} \right),$
$ \left( \ell_{p}^{\lambda}(\widehat{F}): c_{0} \right),$
$ \left( \ell_{p}^{\lambda}(\widehat{F}): c \right),$
$ \left( \ell_{p}^{\lambda}(\widehat{F}): \ell_{1} \right),$
$ \left( \ell_{1}^{\lambda}(\widehat{F}): \ell_{p} \right),$
 and
$ \left( \ell_{\infty}^{\lambda}(\widehat{F}): \ell_{p} \right)$ of matrix transformations
 where $1 \leq p \leq \infty.$

We assume that the sequences $x$ and $y$ are connected by $y=Ex.$ We write for simplicity in notation that
\begin{eqnarray*}
e_{nk}(m)=\lambda_{k}\left\{\frac{f_{k+1}^{2}a_{nk}}{(\lambda_{k}-\lambda_{k-1})f_{k}f_{k+1}}+
  \left[\frac{1}{(\lambda_{k}-\lambda_{k-1})f_{k}f_{k+1}}-
  \frac{1}{(\lambda_{k+1}-\lambda_{k})f_{k+1}f_{k+2}}\right]\sum_{j=k+1}^{m}f_{j+1}^{2}a_{nj}\right\},
\end{eqnarray*}
where  $k<m$ and
\begin{eqnarray}\label{eq400}
\quad e_{nk}=\lambda_{k}\left\{\frac{f_{k+1}^{2}a_{nk}}{(\lambda_{k}-\lambda_{k-1})f_{k}f_{k+1}}+
\left[\frac{1}{(\lambda_{k}-\lambda_{k-1})f_{k}f_{k+1}}-\frac{1}{(\lambda_{k+1}-\lambda_{k})f_{k+1}f_{k+2}}\right]\sum_{j=k+1}^{\infty}f_{j+1}^{2}a_{nj}\right\}
\end{eqnarray}
for all $k,m,n\in\mathbb{N}_0$ provided the convergence of the series.

Now, we quote the following lemmas which are needed in proving our theorems:
\begin{lem} \cite{Michael} \label{l6}
$A=(a_{nk})\in \left(\ell_{p} : c_{0}\right)$ if and only if
\begin{enumerate}
\item[(i)] For $p=1,$
\begin{eqnarray}\label{eq12}
&&\lim_{n\to\infty}a_{nk}=0 ~\mbox{for~all~} k \in \mathbb{N}_0,\\
\label{eq13}
&&\sup_{n,k \in \mathbb{N}_0}\left| a_{nk}\right|< \infty.\nonumber
\end{eqnarray}
\item[(ii)] For $1< p < \infty,$ (\ref{eq12}) holds and
\begin{eqnarray*}\label{eq14}
\sup_{n\in\mathbb{N}_0}\sum_{k}\left| a_{nk}\right|^{q}< \infty.
\end{eqnarray*}
\item[(iii)] For $p=\infty,$
\begin{eqnarray*}\label{eq15}
\lim_{n\to\infty}\sum_{k}\left| a_{nk}\right|=0.
\end{eqnarray*}
\end{enumerate}
\end{lem}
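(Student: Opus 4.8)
The plan is to derive the characterization of $\left(\ell_{p}:c_{0}\right)$ from the already-quoted characterization of $\left(\ell_{p}:c\right)$ in Lemma \ref{l2}, by exploiting the elementary fact that a sequence lies in $c_{0}$ exactly when it lies in $c$ with limit $0$. Concretely, I would first record the reduction: $A=(a_{nk})\in\left(\ell_{p}:c_{0}\right)$ if and only if $A\in\left(\ell_{p}:c\right)$ and $\lim_{n\to\infty}A_{n}(x)=0$ for every $x\in\ell_{p}$. Since $c_{0}\subset c$, the forward implication is trivial, and the content is to turn the vanishing of the limit into a usable column condition.

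The central step is to pin down the $c$-limit functional. Assuming $A\in\left(\ell_{p}:c\right)$, the column limits $\alpha_{k}:=\lim_{n\to\infty}a_{nk}$ exist by Lemma \ref{l2}, and I claim $\lim_{n\to\infty}A_{n}(x)=\sum_{k}\alpha_{k}x_{k}$ for all $x\in\ell_{p}$. For $1<p<\infty$ this follows from the uniform bound $\sup_{n}\sum_{k}|a_{nk}|^{q}<\infty$: by Fatou's lemma $(\alpha_{k})\in\ell_{q}$, so $\sum_{k}\alpha_{k}x_{k}$ converges by H\"older's inequality, and a head/tail ($\varepsilon/3$) estimate gives the interchange of limit and sum, the tail being controlled uniformly in $n$ and the finite head handled by columnwise convergence. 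For $p=1$ the same conclusion holds using $\sup_{n,k}|a_{nk}|<\infty$, and for $p=\infty$ the condition $\lim_{n\to\infty}\sum_{k}|a_{nk}-\alpha_{k}|=0$ from Lemma \ref{l2}(iii) yields $\big|A_{n}(x)-\sum_{k}\alpha_{k}x_{k}\big|\leq\|x\|_{\infty}\sum_{k}|a_{nk}-\alpha_{k}|\to 0$ directly. In every case the limit functional is $x\mapsto\sum_{k}\alpha_{k}x_{k}$.

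Once this is established, the vanishing requirement is immediate: $\sum_{k}\alpha_{k}x_{k}=0$ for all $x$ forces $\alpha_{k}=0$ for each $k$ (take $x=e^{(k)}$), and conversely $\alpha_{k}\equiv 0$ makes the limit identically zero. I would then substitute $\alpha_{k}=0$, that is condition (\ref{eq12}), into each part of Lemma \ref{l2}. For $1<p<\infty$ this leaves (\ref{eq12}) together with $\sup_{n}\sum_{k}|a_{nk}|^{q}<\infty$, giving part (ii); for $p=1$ it leaves (\ref{eq12}) with $\sup_{n,k}|a_{nk}|<\infty$, giving part (i); and for $p=\infty$, setting $\alpha_{k}=0$ collapses $\lim_{n\to\infty}\sum_{k}|a_{nk}-\alpha_{k}|=0$ to $\lim_{n\to\infty}\sum_{k}|a_{nk}|=0$, which by itself already forces both $\sup_{n}\sum_{k}|a_{nk}|<\infty$ and the vanishing of every column, giving part (iii). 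The sufficiency directions I would check by running the same estimates in reverse.

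I expect the only genuine obstacle to be the $p=\infty$ case, where finitely supported sequences are not dense in $\ell_{\infty}$; there the interchange of limit and summation cannot be obtained from a density argument and must instead be read off from the strong condition $\lim_{n\to\infty}\sum_{k}|a_{nk}-\alpha_{k}|=0$ supplied by Lemma \ref{l2}(iii). This is precisely what makes the $\ell_{\infty}$ criterion structurally different from the finite-$p$ criteria, and it is the reason the $p=\infty$ statement takes the compact single-condition form rather than the ``column limit plus boundedness'' form.
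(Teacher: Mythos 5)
Your proof is correct, but there is nothing in the paper to compare it against line by line: the paper does not prove this lemma at all, it quotes it (together with Lemmas \ref{l1}--\ref{l3}) from the Stieglitz--Tietz survey \cite{Michael} of known matrix-transformation characterizations. Your derivation is therefore a genuine alternative to bare citation: you reduce $(\ell_{p}:c_{0})$ to the quoted characterization of $(\ell_{p}:c)$ in Lemma \ref{l2} by identifying the limit functional, $\lim_{n}A_{n}(x)=\sum_{k}\alpha_{k}x_{k}$ with $\alpha_{k}=\lim_{n}a_{nk}$, and then noting that this vanishes identically iff every $\alpha_{k}=0$ (test on $e^{(k)}$). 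All three cases check out: for $1<p<\infty$ the Fatou argument gives $(\alpha_{k})\in\ell_{q}$ and the head/tail estimate justifies interchanging limit and sum; for $p=1$ the same works with $\sup_{n,k}|a_{nk}|<\infty$; and for $p=\infty$ the interchange is immediate from $\lim_{n}\sum_{k}|a_{nk}-\alpha_{k}|=0$, after which setting $\alpha_{k}\equiv 0$ collapses the two conditions of Lemma \ref{l2}(iii) to the single condition $\lim_{n}\sum_{k}|a_{nk}|=0$ --- your observation that this one condition subsumes both row-sum boundedness and column vanishing is right, since a convergent sequence of row sums is bounded and $|a_{nk}|\le\sum_{j}|a_{nj}|$. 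It is also worth noting that your head/tail estimate is exactly the technique the paper itself deploys when proving Theorem \ref{T6}, so your argument is stylistically consistent with the paper. What your route buys is self-containedness: the lemma becomes a corollary of Lemma \ref{l2} rather than an independent citation; what it does not remove is the dependence on the quoted $(\ell_{p}:c)$ result, which is where the real analytic content (notably the Schur-type condition for $p=\infty$) actually lives.
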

\begin{lem} \cite{Michael}\label{l7}
$A=(a_{nk})\in (\ell_{1}:\ell_{p})$ if and only if
\begin{eqnarray*}
\sup_{k\in\mathbb{N}_0}\sum_{n}\left| a_{nk} \right|^{p} < \infty
\label{eq16}
\end{eqnarray*}
\end{lem}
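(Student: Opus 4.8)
The statement to prove is Lemma~\ref{l7}, namely the characterization
\[
A=(a_{nk})\in(\ell_{1}:\ell_{p})
\quad\Longleftrightarrow\quad
\sup_{k\in\mathbb{N}_0}\sum_{n}\left|a_{nk}\right|^{p}<\infty,
\]
where $1\le p<\infty$. Since this is a classical matrix-class characterization between two $BK$-spaces, the natural approach is to pass through the action of $A$ on the canonical basis $\{e^{(k)}\}_{k\in\mathbb{N}_0}$ of $\ell_{1}$, which is a Schauder basis, and to exploit the fact that $\ell_{1}$ is generated by its unit vectors in a particularly rigid way. The plan is to prove the two implications separately, establishing necessity by testing $A$ on the basis vectors and sufficiency by a direct norm estimate combined with a density/continuity argument.

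\textbf{Necessity.}
First I would assume $A\in(\ell_{1}:\ell_{p})$. Applying $A$ to the basis sequence $e^{(k)}=(0,\dots,0,1,0,\dots)$, whose $\ell_{1}$-norm is $1$, gives $Ae^{(k)}=(a_{nk})_{n\in\mathbb{N}_0}$, and by hypothesis this lies in $\ell_{p}$ for each fixed $k$. Thus $\sum_{n}|a_{nk}|^{p}<\infty$ for every $k$. To upgrade this pointwise finiteness to a uniform bound, I would invoke the continuity of the induced operator $L_{A}:\ell_{1}\to\ell_{p}$: since $\ell_{1}$ and $\ell_{p}$ are $BK$-spaces and $A$ maps one into the other, $L_{A}$ is bounded by the closed-graph theorem (this is the standard principle that a matrix map between $FK$-spaces is continuous). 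Boundedness gives a finite operator norm $\|L_{A}\|$, and evaluating on the unit vectors yields
\[
\Big(\sum_{n}|a_{nk}|^{p}\Big)^{1/p}=\|Ae^{(k)}\|_{p}\le\|L_{A}\|\,\|e^{(k)}\|_{1}=\|L_{A}\|
\]
for every $k$, which is exactly $\sup_{k}\sum_{n}|a_{nk}|^{p}\le\|L_{A}\|^{p}<\infty$.

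\textbf{Sufficiency.}
Conversely, suppose $M:=\sup_{k}\sum_{n}|a_{nk}|^{p}<\infty$. Take any $x=(x_{k})\in\ell_{1}$. I would first check that each series $A_{n}(x)=\sum_{k}a_{nk}x_{k}$ converges absolutely, since $|a_{nk}|\le M^{1/p}$ uniformly and $x\in\ell_{1}$. The core estimate is then to bound $\|Ax\|_{p}$. Using the triangle inequality in $\ell_{p}$ (equivalently, treating $Ax=\sum_{k}x_{k}(Ae^{(k)})$ as an $\ell_{1}$-convergent combination of vectors in $\ell_{p}$ and applying Minkowski's inequality for the $\ell_{p}$-norm), one obtains
\[
\|Ax\|_{p}
\le\sum_{k}|x_{k}|\,\|Ae^{(k)}\|_{p}
\le M^{1/p}\sum_{k}|x_{k}|
=M^{1/p}\|x\|_{1}<\infty,
\]
so $Ax\in\ell_{p}$ for every $x\in\ell_{1}$, i.e.\ $A\in(\ell_{1}:\ell_{p})$. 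The main obstacle, and the step deserving care, is justifying the interchange of summation orders implicit in $\|Ax\|_{p}\le\sum_{k}|x_{k}|\|Ae^{(k)}\|_{p}$: one must verify that the double sum $\sum_{n}\big|\sum_{k}a_{nk}x_{k}\big|^{p}$ is controlled by the mixed norm, which is precisely where Minkowski's integral/series inequality (the continuous triangle inequality for $\ell_{p}$) is needed and where the uniform column bound $M$ does the work. Everything else is routine once this estimate is in place.
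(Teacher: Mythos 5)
Your proof is correct, but there is no proof in the paper to compare it against: the paper does not prove Lemma \ref{l7} at all, it simply quotes it (with a citation to Stieglitz and Tietz) as a known characterization, exactly as it does with the other matrix-class lemmas in that section. Your argument is the standard self-contained proof of this classical fact, and both halves are sound. For necessity, testing $A$ on the unit vectors $e^{(k)}$ and invoking the automatic continuity of matrix maps between $BK$-spaces (a fact the paper itself records later as Theorem \ref{o1}, namely $(X:Y)\subset B(X:Y)$ for $FK$-spaces) gives $\bigl(\sum_{n}|a_{nk}|^{p}\bigr)^{1/p}=\|Ae^{(k)}\|_{p}\le\|L_{A}\|$ uniformly in $k$. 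For sufficiency, the estimate $\|Ax\|_{p}\le\sum_{k}|x_{k}|\,\|Ae^{(k)}\|_{p}\le M^{1/p}\|x\|_{1}$ is exactly Minkowski's inequality for series, valid for $1\le p<\infty$ (which is the paper's standing convention for $p$, so no conflict with the $p=\infty$ case, which is covered separately by Lemma \ref{l3}); you correctly flag the interchange of summation order as the one step needing care, and a clean way to finish it is to observe that $\sum_{k}x_{k}Ae^{(k)}$ converges absolutely in the Banach space $\ell_{p}$ and agrees coordinatewise with $Ax$ because the coordinate functionals on $\ell_{p}$ are continuous. As for what each route buys: the paper's bare citation keeps the exposition short, since the lemma is only infrastructure for Theorem \ref{T10}; your proof makes the result self-contained at the cost of half a page, and it does so using only tools (Theorem \ref{o1} and Minkowski's inequality) that are already available in the text.
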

\begin{lem}  \cite{Michael}\label{l8}
Let $1 < p < \infty$. Then, $A=(a_{nk})\in \left( \ell_{\infty}: \ell_{p}\right)$ if and only if
\begin{eqnarray*}\label{eq17}
\sup_{K \in \mathcal{F}}\sum_{n}\left| \sum_{k \in K} a_{nk}\right|^{p}< \infty.
\end{eqnarray*}
\end{lem}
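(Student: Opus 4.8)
The plan is to establish the stated equivalence in the two usual directions, treating the ``only if'' part as routine and reserving the real work for the ``if'' part. Throughout, $\mathcal{F}$ denotes the family of finite subsets of $\mathbb{N}_0$, and I abbreviate $M=\sup_{K\in\mathcal{F}}\sum_{n}\big|\sum_{k\in K}a_{nk}\big|^{p}$.

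For necessity, suppose $A\in(\ell_\infty:\ell_p)$. Since $\ell_\infty$ and $\ell_p$ are $BK$-spaces and $A$ maps the former into the latter, the associated operator $L_A\colon\ell_\infty\to\ell_p$ is bounded, because a matrix map between $BK$-spaces is automatically continuous (closed graph together with continuity of coordinates). Fixing $K\in\mathcal{F}$ and testing on the characteristic sequence $x=\sum_{k\in K}e^{(k)}$, which satisfies $\|x\|_\infty=1$, I get $A_n(x)=\sum_{k\in K}a_{nk}$, whence $\sum_n|\sum_{k\in K}a_{nk}|^p=\|Ax\|_p^p\le\|L_A\|^p$. Taking the supremum over $K\in\mathcal{F}$ yields $M\le\|L_A\|^p<\infty$.

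For sufficiency, assume $M<\infty$. First I would verify that $A_n(x)$ is well defined for every $x\in\ell_\infty$, i.e.\ that each row lies in $\ell_1$: given a finite $K$, splitting it into the indices where $a_{nk}\ge 0$ and where $a_{nk}<0$ and applying the defining supremum to each part gives $\sum_{k\in K}|a_{nk}|\le 2M^{1/p}$, so $\sum_k|a_{nk}|\le 2M^{1/p}$. The core step is then a convexity/extreme-point argument. Fix $N$ and a finite truncation of a sequence $x$ with $\|x\|_\infty\le 1$; the map $(x_0,\dots,x_K)\mapsto\sum_{n=0}^{N}|\sum_{k=0}^{K}a_{nk}x_k|^p$ is convex on the cube $[-1,1]^{K+1}$ and hence attains its maximum at a vertex, i.e.\ on a sign sequence $\varepsilon_k\in\{-1,+1\}$. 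Splitting $\{0,\dots,K\}$ according to the sign of $\varepsilon_k$ and using $|a+b|^p\le 2^{p-1}(|a|^p+|b|^p)$ bounds the vertex value by $2^{p-1}(M+M)=2^pM$ (each of the two index sets lies in $\mathcal{F}$, so $M$ applies termwise after enlarging the outer sum over $n$). Thus $\sum_{n=0}^N|\sum_{k=0}^K a_{nk}x_k|^p\le 2^pM$ uniformly in $N$ and $K$; letting $K\to\infty$ (using the row-summability just proved, so that $A_n(x^{(K)})\to A_n(x)$) and then $N\to\infty$ gives $\|Ax\|_p^p\le 2^pM$, so $Ax\in\ell_p$.

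The main obstacle is precisely this sufficiency argument: one must simultaneously control the outer $\ell_p$-sum over $n$ and the inner action on $x\in\ell_\infty$, with the only available leverage being the finite-subset supremum $M$. The device that makes this work is the reduction to $\pm1$ extreme points by convexity together with the sign-splitting inequality, after which the hypothesis applies termwise; the accompanying double limit in $N$ and $K$ is what forces the preliminary verification that the rows of $A$ are absolutely summable.
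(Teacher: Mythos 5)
Your proof is correct, but there is in fact nothing in the paper to compare it with: the paper states this lemma as a known characterization quoted from the Stieglitz--Tietz survey \cite{Michael} and gives no proof of it, so your argument must be judged on its own merits, and it holds up. The necessity direction is the standard one: matrix maps between $BK$-spaces are automatically continuous, and testing $L_A$ on the characteristic sequences $x=\sum_{k\in K}e^{(k)}$ (nonempty $K\in\mathcal{F}$, $\|x\|_\infty=1$) gives $M\le\|L_A\|^p$. The sufficiency direction, which is the substantive part, is also sound at every step: the sign-splitting of a finite set $K$ into $\{k:a_{nk}\ge 0\}$ and $\{k:a_{nk}<0\}$ legitimately yields $\sum_{k\in K}|a_{nk}|\le 2M^{1/p}$, so every row lies in $\ell_1$ (which is exactly what you need to pass from finite sections $\sum_{k=0}^{K}a_{nk}x_k$ to $A_n(x)$); the observation that $(x_0,\dots,x_K)\mapsto\sum_{n=0}^{N}\left|\sum_{k=0}^{K}a_{nk}x_k\right|^p$ is convex on $[-1,1]^{K+1}$, hence maximized at a vertex $\varepsilon\in\{-1,1\}^{K+1}$, is valid because each summand is the $p$-th power of the absolute value of a linear functional with $p>1$; and the estimate $|a+b|^p\le 2^{p-1}(|a|^p+|b|^p)$ applied to the two sign classes of $\varepsilon$, each of which is a finite set to which the hypothesis applies after enlarging the sum over $n$ to all of $\mathbb{N}_0$, gives the uniform bound $2^pM$. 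The limit passage is unproblematic: for fixed $N$ the sum over $n$ is finite, so $K\to\infty$ is just continuity term by term, and then $N\to\infty$ is monotone convergence of bounded partial sums. Two remarks: the vertex argument uses that the scalars are real, which matches the paper's convention that $\omega$ consists of real sequences (over $\mathbb{C}$ one would split into real and imaginary parts at the cost of a larger constant); and your proof actually delivers more than the lemma asserts, namely the two-sided operator-norm estimate $M^{1/p}\le\|L_A\|\le 2M^{1/p}$.
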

\begin{thm}\label{T2}
Let $A=(a_{nk})$ be an infinite matrix. Then, the following statements hold:
\begin{enumerate}
\item[(i)] Let $1 < p < \infty.$ Then, $A \in \left( \ell_{p}^{\lambda}(\widehat{F}):\ell_{\infty}\right) $ if and only if
\begin{eqnarray}\label{eq18}
&&\sum_{j=k+1}^{\infty}a_{nj}f_{j+1}^{2} ~\textrm{ exists for each }~k\in\mathbb{N}_0,\\
\label{eq19}
&&\left\lbrace\frac{\lambda_{k}f^{2}_{k+1}}{(\lambda_{k}-\lambda_{k-1})f_{k}f_{k+1}}{a}_{nk}\right\rbrace\in\ell_{\infty}~\textrm{ for each }~n\in\mathbb{N}_0,\\
\label{eq20}
&&\sup_{k \in \mathbb{N}_0}\sum_{n}\left|e_{nk} \right|^{q}<\infty,\\
\label{eq21}
&&(a_{nk})_{k \in \mathbb{N}_0}\in d_{3}\cap d_{4}\cap d_{5}.
\end{eqnarray}
\item[(ii)] $A \in \left( \ell_{1}^{\lambda}(\widehat{F}): \ell_{\infty}  \right) $ if and only if (\ref{eq18}) and (\ref{eq19}) hold, and
\begin{eqnarray}\label{eq22}
\sup_{n,k \in \mathbb{N}_0} \left| e_{nk}\right| < \infty.
\end{eqnarray}
\item[(iii)] $A\in \left( \ell_{\infty}^{\lambda}(\widehat{F}): \ell_{\infty}  \right) $ if and only if (\ref{eq18}) and (\ref{eq19}) hold, and
\begin{eqnarray}\label{eq23}
&&\sup_{k \in \mathbb{N}_0} \sum_{n}\left| e_{nk}\right| < \infty,\nonumber\\
\label{eq24}
&&\lim\limits_{m \rightarrow \infty} \sum_{n}\left| e_{nk}(m)-e_{nk}\right|=0.
\end{eqnarray}
\end{enumerate}
\end{thm}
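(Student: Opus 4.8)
The plan is to reduce each of the three assertions to a classical matrix class on $\ell_p$, $\ell_1$ or $\ell_\infty$ through the change of variable $y = Ex$, and then to quote Lemma \ref{l3}. Throughout, a sequence $x \in \ell_p^\lambda(\widehat{F})$ is paired with $y = Ex$, so that $y$ ranges over all of $\ell_p$ (resp. $\ell_1$, $\ell_\infty$) while $x = E^{-1}y$ with $E^{-1} = (g_{nk})$ as given above. Since every step below is an equivalence, necessity and sufficiency are obtained simultaneously.

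First I would make $A_n(x)$ meaningful for every admissible $x$. For a fixed $n$ the row $(a_{nk})_k$ must lie in the $\beta$-dual of the domain space, which by Theorem \ref{T1} is $d_3 \cap d_4 \cap d_5$ when $1 < p < \infty$ (this is condition (\ref{eq21})) and $d_3 \cap d_5 \cap d_6$ when $p = 1$. The conditions (\ref{eq18}) and (\ref{eq19}), common to all parts, are singled out because (\ref{eq18}) is precisely what makes the inner series $\sum_{j=k+1}^\infty f_{j+1}^2 a_{nj}$ appearing in (\ref{eq400}) convergent, so that the derived entries $e_{nk}$ are well defined in the first place.

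The core step is the identity $A_n(x) = \sum_k e_{nk} y_k$ for every $n$. This is obtained exactly as in the proof of Theorem \ref{T1}, with the single sequence $a_k$ replaced by the row $a_{nk}$: one substitutes $x_k = (E^{-1}y)_k$ into the partial sum $\sum_{k=0}^m a_{nk} x_k$, rearranges by Abel summation to produce the finite-stage coefficients $e_{nk}(m)$, and lets $m \to \infty$, the $\beta$-dual conditions of the previous paragraph justifying the passage. Hence $Ax = E^A y$ with $E^A = (e_{nk})$, and therefore $A \in (\ell_p^\lambda(\widehat{F}) : \ell_\infty)$ if and only if $E^A \in (\ell_p : \ell_\infty)$. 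Applying Lemma \ref{l3}(i) to $E^A$ for $1 < p < \infty$ yields the boundedness condition (\ref{eq20}), and Lemma \ref{l3}(ii) for $p = 1$ yields the uniform boundedness (\ref{eq22}); this settles (i) and (ii).

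I expect the case $p = \infty$ in (iii) to be the real obstacle. Because $\ell_\infty^\lambda(\widehat{F})$ carries no Schauder basis, the identity $A_n(x) = \sum_k e_{nk} y_k$ can no longer be read off a basis expansion and must be secured directly for arbitrary $y \in \ell_\infty$; it is exactly the uniform-convergence requirement (\ref{eq24}), $\lim_{m\to\infty}\sum_n |e_{nk}(m) - e_{nk}| = 0$, that legitimises passing to the limit inside the sum and so identifies the action of $A$ with that of $E^A$. Once this is in place, condition (\ref{eq23}) records the membership $E^A \in (\ell_\infty : \ell_\infty)$, completing (iii). Establishing (\ref{eq24}) and verifying that it genuinely controls the tail uniformly is where the main work lies, all remaining steps being routine applications of the cited lemmas.
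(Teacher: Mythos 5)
Your proposal is correct and follows essentially the same route as the paper: both reduce the problem through the partial-sum identity $\sum_{k=0}^{m}a_{nk}x_{k}=\sum_{k=0}^{m-1}e_{nk}(m)y_{k}+\cdots$ with the passage $m\to\infty$ justified by the $\beta$-dual conditions of Theorem \ref{T1}, arriving at $A_{n}(x)=\sum_{k}e_{nk}y_{k}$, and then apply the classical characterizations of Lemma \ref{l3} to the derived matrix $(e_{nk})$. The only cosmetic difference is in the necessity of (\ref{eq20}): the paper re-derives it via a Banach--Steinhaus argument on the row functionals $f_{n}(x)=\sum_{k}a_{nk}x_{k}$, whereas you obtain it by noting that the identity and the surjectivity of $x\mapsto y=Ex$ give the equivalence $A\in\bigl(\ell_{p}^{\lambda}(\widehat{F}):\ell_{\infty}\bigr)\Leftrightarrow(e_{nk})\in(\ell_{p}:\ell_{\infty})$ and then quoting Lemma \ref{l3} once more, which is an equally valid (and slightly cleaner) way to close the argument.
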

\begin{proof}
(i) Suppose that the conditions (\ref{eq18})-(\ref{eq21}) hold and let $x=(x_{k})\in \ell_{p}^{\lambda}(\widehat{F})~\mbox{for}~ 1<p < \infty.$ Then, we have by Theorem \ref{T1} that $(a_{nk})_{k\in\mathbb{N}_0}\in \big[\ell_{p}^{\lambda}(\widehat{F})\big]^{\beta}$ for all $n\in\mathbb{N}_0$ and this implies that $Ax$ exists. Also, it is clear that the associated sequence $y=(y_{k})$ such that $Tx=y$ is in the space $\ell_{p }\subset c_{0}.$
%$E=(e_{nk})\in \left( c: \ell_{p}\right),$ $1 \leq p < \infty$.

Let us now consider the following equality derived from the $mth$ partial sum of the series $\sum_{k}a_{nk}x_{k}$ by using the relation $y=Ex:$
\begin{eqnarray}\label{c32}
\sum_{k=0}^{m}a_{nk}x_{k}
=\sum_{k=0}^{m-1}e_{nk}(m)y_{k}+\frac{f_{m+1}^{2}\lambda_{m}}{f_{m}f_{m+1}(\lambda_{m}-\lambda_{m-1})}a_{nm}y_{m}
\end{eqnarray}
for all $m,n\in\mathbb{N}_0.$ By using the conditions (\ref{eq18})-(\ref{eq20}), we obtain from (\ref{c32}), as $m\rightarrow\infty,$ that
\begin{eqnarray}\label{eq25}
\sum_{k}a_{nk}x_{k}=\sum_{k}e_{nk}y_{k}
~\mbox{for~ all ~}n \in \mathbb{N}_0.
\end{eqnarray}

Furthermore, $E=(e_{nk}) \in \left( \ell_{p}: \ell_{\infty}\right) $ by Lemma \ref{l3}, we have $Ey\in \ell_{\infty}$. Therefore, one can see by applying H\"{o}lder's inequality that
\begin{eqnarray*}\label{eq26}
\| Ax\|_{\infty}=\sup_{n \in \mathbb{N}_0}\left| \sum_{k}a_{nk}x_{k}\right|
\leq   \sup_{n \in \mathbb{N}_0}\left( \sum_{k}\left| e_{nk}\right|^{q} \right)^{1/q} \left( \sum_{k}\left| y_{k}\right|^{p} \right)^{1/p}< \infty
\end{eqnarray*}
which shows that $Ax \in \ell_{\infty}.$ Hence, $A\in \big( \ell_{p}^{\lambda}(\widehat{F}) : \ell_{\infty}\big).$

Conversely, suppose that $A=(a_{nk})\in \big(\ell_{p}^{\lambda}(\widehat{F}):\ell_{\infty} \big),$ where $1<p<\infty.$ Then,
$(a_{nk})_{k\in\mathbb{N}_0}\in\big[\ell_{p}^{\lambda}(\widehat{F})\big]^{\beta}$ for all $n\in\mathbb{N}_0$ which implies the necessity of the  condition (\ref{eq21}) with Theorem \ref{T1}.
Since $(a_{nk})_{k\in\mathbb{N}_0}\in\big[\ell_{p}^{\lambda}(\widehat{F})\big]^{\beta}$ for all $n\in\mathbb{N}_0,$ (\ref{eq25}) holds for all $x \in\ell_{p}^{\lambda}(\widehat{F})$ and $y\in\ell_{p}.$

Let us now consider the linear functional $f_{n}$ on
$\ell_{p}^{\lambda}(\widehat{F})$ by
\begin{eqnarray*}\label{eq27}
f_{n}(x)=\sum_{k} a_{nk}x_{k} ~\mbox{for~all~} n \in \mathbb{N}_0.
\end{eqnarray*}
Then, since $\ell_{p}^{\lambda}(\widehat{F}) $ and $\ell_{p}$ are norm isomorphic, it should follow with (\ref{eq25})
that
\begin{eqnarray*}\label{eq28}
\| f_{n} \| =\| E_{n} \|_{q}=\left( \sum_{k}\left| e_{nk}\right|^{q} \right)^{1/q} ~\mbox{for~all~} n \in \mathbb{N}_0,
\end{eqnarray*}
where $E_{n}=\left( e_{nk}\right)_{k\in\mathbb{N}_0}\in \ell_{q}. $

This just show that the functional defined by the rows of $A$ on $\ell_{p}^{\lambda}(\widehat{F})$ are pointwise bounded.
Then, we deduce by Banach-Steinhaus Theorem that these functionals are uniformly bounded. Hence, there exists a constant $M>0$ such that $\| f_{n} \| \leq M$ for all $n \in \mathbb{N}_0$ which gives us $\sup_{k \in \mathbb{N}_0}\sum_{n}\left| e_{nk}\right|^{q}< \infty.$

This completes the proof of Part (i).

Similarly, Parts (ii) and (iii) can be proved by Parts (i) and (ii) of Lemma \ref{l3}.
\end{proof}

\begin{thm}\label{T6}
Let $A=(a_{nk})$ be an infinite matrix. Then, the following statements hold:
\begin{enumerate}
\item[(i)] $A \in \left( \ell_{1}^{\lambda}(\widehat{F}): c \right) $ if and only if (\ref{eq18}) and (\ref{eq19}) hold, and
\begin{eqnarray}\label{eq29}
\lim_{n \rightarrow \infty} e_{nk}=\alpha_{k}~\textrm{ for all }~ k \in \mathbb{N}_0.
\end{eqnarray}
\item[(ii)] Let $1 < p < \infty$. Then, $A \in \left( \ell_{p}^{\lambda}(\widehat{F}): c \right)$ if and only if
(\ref{eq18})-(\ref{eq21}) and (\ref{eq29}) hold.
\item[(iii)] $A \in \left( \ell_{\infty}^{\lambda}(\widehat{F}): c \right) $ if and only if (\ref{eq18}), (\ref{eq19}) and (\ref{eq24}) hold, and
\begin{eqnarray*}\label{eq30}
\lim\limits_{n \rightarrow \infty}\sum_{k}\left| e_{nk}-\alpha_{k}\right|=0.
\end{eqnarray*}
\end{enumerate}
\end{thm}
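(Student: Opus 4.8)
The plan is to reduce each part of Theorem \ref{T6} to a characterization of an ordinary matrix class involving the image matrix $E=(e_{nk})$, exactly as was done for the $\ell_\infty$-target case in Theorem \ref{T2}. The starting point is the fundamental identity \eqref{eq25}, namely $\sum_k a_{nk}x_k=\sum_k e_{nk}y_k$ for all $n\in\mathbb{N}_0$, which holds once $(a_{nk})_{k\in\mathbb{N}_0}$ lies in the appropriate $\beta$-dual; this is precisely what conditions \eqref{eq18}, \eqref{eq19} (together with \eqref{eq21} in the $1<p<\infty$ case) guarantee via Theorem \ref{T1}. With $y=Ex$ ranging over all of $\ell_1$, $\ell_p$, or $\ell_\infty$ respectively as $x$ ranges over the corresponding $\widehat F$-space, the statement $Ax\in c$ for all $x$ becomes equivalent to $Ey\in c$ for all $y$ in the base space, i.e. $E\in(\ell_1:c)$, $E\in(\ell_p:c)$, or $E\in(\ell_\infty:c)$.

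For Part (i), I would invoke Lemma \ref{l2}(ii): $E\in(\ell_1:c)$ iff $\lim_{n\to\infty}e_{nk}$ exists for each fixed $k$ and $\sup_{n,k}|e_{nk}|<\infty$. The limit condition is exactly \eqref{eq29}, and the supremum bound is the finiteness already encoded once \eqref{eq18} and \eqref{eq19} secure well-definedness of the $e_{nk}$. For Part (ii), I would use Lemma \ref{l2}(i): $E\in(\ell_p:c)$ for $1<p<\infty$ iff $\lim_n e_{nk}$ exists for each $k$ and $\sup_n\sum_k|e_{nk}|^q<\infty$; the first is \eqref{eq29} and the second, combined with the prerequisites for \eqref{eq25}, is subsumed in the joint requirement \eqref{eq18}--\eqref{eq21}. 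For Part (iii), I would apply Lemma \ref{l2}(iii): $E\in(\ell_\infty:c)$ iff $\sup_n\sum_k|e_{nk}|<\infty$ and $\lim_n\sum_k|e_{nk}-\alpha_k|=0$, where $\alpha_k=\lim_n e_{nk}$; the tail-limit condition is the displayed equation in Part (iii), while condition \eqref{eq24} ensures that passage to the limit in the partial-sum identity \eqref{c32} is legitimate for $\ell_\infty$-type sequences.

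The main technical obstacle, as in Theorem \ref{T2}, is justifying the interchange of summation that yields \eqref{eq25} from the partial-sum identity \eqref{c32}: one must verify that the boundary term $\frac{f_{m+1}^2\lambda_m}{f_mf_{m+1}(\lambda_m-\lambda_{m-1})}a_{nm}y_m$ vanishes as $m\to\infty$ and that $e_{nk}(m)\to e_{nk}$ in the sense needed for the series to converge to $\sum_k e_{nk}y_k$. This is exactly where \eqref{eq18} (existence of the tail sums $\sum_{j>k}a_{nj}f_{j+1}^2$) and \eqref{eq19} (boundedness of the diagonal-type terms) are indispensable, and in the $\ell_\infty$ case the additional uniform control \eqref{eq24} is what upgrades this to a statement valid for the whole space rather than merely on a dense subset.

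Having established \eqref{eq25}, each part is then a routine verification in both directions: sufficiency follows by substituting the relevant lemma's conditions into the identity and reading off $Ax\in c$, and necessity follows by first noting that $A$ must carry the basis sequences $b^{(k)}$ (equivalently, the finitely-supported sequences) into convergent sequences to force \eqref{eq29}, then applying the uniform-boundedness principle to the functionals $x\mapsto\sum_k a_{nk}x_k$ to recover the quantitative conditions, mirroring the Banach--Steinhaus argument in the proof of Theorem \ref{T2}(i). Since the isomorphism between $\ell_p^\lambda(\widehat F)$ and $\ell_p$ is norm-preserving, no extra constants intrude. I therefore expect the proof to consist of three short paragraphs, one per part, each citing the corresponding portion of Lemma \ref{l2} and referring back to the already-proved identity \eqref{eq25} and the $\beta$-dual characterizations of Theorem \ref{T1}.
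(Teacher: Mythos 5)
Your proposal is correct and takes essentially the same route as the paper: both establish the identity (\ref{eq25}) from the $\beta$-dual conditions, reduce the problem to $E\in(\ell_{1}:c)$, $(\ell_{p}:c)$, or $(\ell_{\infty}:c)$, obtain necessity from the inclusion $c\subset\ell_{\infty}$ together with Theorem \ref{T2}, and then invoke Lemma \ref{l2}. The only cosmetic difference is that for sufficiency in Part (ii) the paper writes out the convergence $A_{n}(x)\to\sum_{k}\alpha_{k}y_{k}$ explicitly via an $\epsilon$-splitting and H\"{o}lder's inequality (in effect re-proving the sufficiency half of Lemma \ref{l2}(i)), whereas you cite the lemma directly.
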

\begin{proof}
Assume that $A$ satisfies the conditions (\ref{eq18})-(\ref{eq21}) and (\ref{eq29}), and $x \in \ell_{p}^{\lambda}(\widehat{F}),$ where $1 <p <\infty.$ Then $Ax$ exists and by using (\ref{eq29}), we have for every $k \in \mathbb{N}_0$ that
$\left| e_{nk} \right|^{q}\rightarrow \left| \alpha_{k} \right|^{q},$ as $n\rightarrow\infty,$ which leads us with (\ref{eq20}) to the following inequality
\begin{eqnarray*}\label{eq31}
\sum_{j=0}^{k}\left| \alpha_{j}\right|^{q} \leq
\sup_{n\in\mathbb{N}_0}\sum_{j=0}^{k}\left| e_{nj}\right|^{q}
=M< \infty
\end{eqnarray*}
which holds for every $k \in \mathbb{N}_0.$ This shows that
$(\alpha_{k})\in \ell_{q}.$ Since $x \in \ell_{p}^{\lambda}(\widehat{F}),$ we have $y \in \ell_{p}.$ Therefore by
H\"{o}lder's inequality,  we derive that $\left( \alpha_{k}y_{k}\right) \in \ell_{1} $ for each $y\in \ell_{p}.$

Now, for any given $\epsilon>0,$ choose a fixed $k_{0}\in \mathbb{N}_0$ such that
\begin{eqnarray*}\label{eq32}
\left( \sum_{k=k_{0}+1}^{\infty}\left| y_{k}\right|^{p} \right)^{1/p} \leq \frac{\epsilon}{4M^{1/q}}.
\end{eqnarray*}
Then, it follows from (\ref{eq29}) that there is $m_{0}\in \mathbb{N}_0$ such that
\begin{eqnarray*}\label{eq33}
\left| \sum_{k=0}^{k_{0}}\left( e_{nk}-\alpha_{k}\right)y_{k} \right|<\frac{\epsilon}{2}~\textrm{ for all }~n \geq n_{0}.
\end{eqnarray*}
Therefore, by using (\ref{eq25}), we get for all $n\geq n_{0}$ that
\begin{eqnarray*}
\left|\sum_{k}a_{nk}x_{k}-\sum_{k}\alpha_{k}y_{k}\right|&=&\left| \sum_{k}\left( e_{nk}-\alpha_{k}\right) y_{k}\right|\\
&\leq&\left| \sum_{k=0}^{k_{0}}\left( e_{nk}-\alpha_{k}\right) y_{k}\right|+
\left| \sum_{k=k_{0}+1}^{\infty}\left( e_{nk}-\alpha_{k}\right) y_{k}\right| \\
&<&\frac{\epsilon}{2}+ \left[ \sum_{k=k_{0}+1}^{\infty}\left( \left|e_{nk} \right| +\left| \alpha_{k}\right| \right)^{q} \right]^{1/q} \left[ \sum_{k=k_{0}+1}^{\infty} \left| y_{k} \right|^{p} \right]^{1/p} \\
&<&\frac{\epsilon}{2}+\frac{\epsilon}{4M^{1/q}}\left[ \left(\sum_{k=k_{0}+1}^{\infty}\left| e_{nk}\right|^{q}  \right)^{1/q}
+\left(\sum_{k=k_{0}+1}^{\infty}\left| \alpha_{k} \right|^{q}  \right)^{1/q}\right] \\
&<&\frac{\epsilon}{2}+\frac{\epsilon}{4M^{1/q}}2M^{1/q}=\epsilon .
\end{eqnarray*}
Hence, $A_{n}(x) \rightarrow \sum_{k} \alpha_{k}y_{k},$ as $n \rightarrow\infty,$ which means that $Ax\in c$ i.e. $A \in \left( \ell_{p}^{\lambda}(\widehat{F}): c \right).$

Conversely let $A \in \left( \ell_{p}^{\lambda}(\widehat{F}): c \right)$ with $1 < p < \infty.$ Since $c \subset \ell_{\infty},$ we have
$A \in \left( \ell_{p}^{\lambda}(\widehat{F}): \ell_{\infty} \right).$ Thus, the necessity of the conditions (\ref{eq18})-(\ref{eq21}) is immediately obtained by Theorem \ref{T2}, which together imply that (\ref{eq25}) holds for all $x \in \ell_{p}^{\lambda}(\widehat{F}).$ Since $Ax \in c$ by the hypothesis, we get by  (\ref{eq25}) that $Ey \in c$ which means that $E=\left( e_{nk}\right) \in \left( \ell_{p}:c\right).$ The necessity of (\ref{eq29}) is immediate by
Lemma \ref{l2}. This completes the proof of Part (i).
\par Since Parts (i) and (iii) can be proved similarly, we omit their proof.
\end{proof}

\begin{thm}\label{T8}
Let $A=(a_{nk})$ be an infinite matrix. Then, the following statements hold:
\begin{enumerate}
\item[(i)] $A \in \left( \ell_{1}^{\lambda}(\widehat{F}): c_{0} \right) $ if and only if (\ref{eq18}) and (\ref{eq19}) hold, and
\begin{eqnarray}\label{eq34}
\lim\limits_{n \rightarrow \infty} e_{nk}=0
~\mbox{for~all~} k \in \mathbb{N}_0.
\end{eqnarray}
\item[(ii)] Let $1 < p < \infty.$ Then, $A \in \left( \ell_{p}^{\lambda}(\widehat{F}): c_{0} \right)$ if and only if
 (\ref{eq18})-(\ref{eq21}) and (\ref{eq34}) hold.
\item[(iii)] $A \in \left( \ell_{\infty}^{\lambda}(\widehat{F}): c_{0} \right) $ if and only if  (\ref{eq18}), (\ref{eq19}) and (\ref{eq24}) hold, and
\begin{eqnarray*}\label{eq35}
\lim\limits_{n \rightarrow \infty}\sum_{k}\left| e_{nk}\right|=0.
\end{eqnarray*}
\end{enumerate}
\end{thm}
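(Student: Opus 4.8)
The plan is to exploit the now-familiar transfer principle connecting $A \in (\ell_q^\lambda(\widehat F):Y)$ with $E=(e_{nk}) \in (\ell_q:Y)$, exactly as was carried out in Theorem~\ref{T2} and Theorem~\ref{T6}, but now with the target space $c_0$ in place of $\ell_\infty$ and $c$. The starting point is the same key identity~(\ref{eq25}), namely $\sum_k a_{nk}x_k=\sum_k e_{nk}y_k$ for all $n\in\mathbb{N}_0$, which is valid once we know that each row $(a_{nk})_{k\in\mathbb{N}_0}$ belongs to the appropriate $\beta$-dual; this membership is precisely what conditions~(\ref{eq18})--(\ref{eq19}) (for $p=1$), and conditions~(\ref{eq18})--(\ref{eq21}) together with Theorem~\ref{T1} (for $1<p<\infty$), guarantee. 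The identity reduces the whole problem to characterizing when $E=(e_{nk})$ maps the associated $\ell_p$-sequence $y=Ex$ into $c_0$.

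First I would establish necessity. Assuming $A\in(\ell_p^\lambda(\widehat F):c_0)$, the inclusion $c_0\subset\ell_\infty$ immediately forces $A\in(\ell_p^\lambda(\widehat F):\ell_\infty)$, so Theorem~\ref{T2} supplies the necessity of the structural conditions~(\ref{eq18})--(\ref{eq21}) in the three respective cases, and in particular guarantees that~(\ref{eq25}) holds for every $x\in\ell_p^\lambda(\widehat F)$. Combining $Ax\in c_0$ with~(\ref{eq25}) then yields $Ey\in c_0$ for every $y\in\ell_p$, that is, $E\in(\ell_p:c_0)$. Applying Lemma~\ref{l6} to $E$ in each of the cases $p=1$, $1<p<\infty$, and $p=\infty$ delivers exactly the remaining column-limit conditions: $\lim_n e_{nk}=0$ in~(\ref{eq34}) for the $\ell_1$- and $\ell_p$-cases, and $\lim_n\sum_k|e_{nk}|=0$ for the $\ell_\infty$-case.

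For sufficiency I would run the argument in reverse. Given the hypothesized conditions, the $\beta$-dual conditions ensure $Ax$ is well defined and that~(\ref{eq25}) holds, reducing matters to verifying $E\in(\ell_p:c_0)$ via Lemma~\ref{l6}; the supplied conditions are precisely the hypotheses of that lemma, so $Ey\to 0$ coordinatewise-to-norm as required, whence $Ax\in c_0$. The only genuine wrinkle relative to the $c$-case of Theorem~\ref{T6} is that here the limit vector $(\alpha_k)$ is identically zero, so the $\epsilon/2+\epsilon/2$ splitting simplifies to a single estimate $\left|\sum_k e_{nk}y_k\right|\le \left(\sum_k|e_{nk}|^q\right)^{1/q}\|y\|_p$ controlled by the tail smallness and the column-convergence to $0$.

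The main obstacle, such as it is, will be the verification of~(\ref{eq24}) as a \emph{standing} hypothesis in Part~(iii): unlike Parts~(i)--(ii), the $p=\infty$ case needs the uniform tail condition~(\ref{eq24}) merely to make the passage $m\to\infty$ in~(\ref{c32}) legitimate, since for $\ell_\infty$ one cannot lean on the coordinatewise smallness of $y=Ex$ (as $y\in\ell_\infty$ need not vanish) and must instead control $\sup_n\sum_k|e_{nk}(m)-e_{nk}|$. Because the detailed mechanics for each part mirror the corresponding parts of Theorem~\ref{T6} line for line---with the constant sequence $(\alpha_k)$ replaced by the null sequence and Lemma~\ref{l6} replacing the relevant part of Lemma~\ref{l2}---I would present Part~(ii) in full and then remark that Parts~(i) and~(iii) follow analogously, omitting the repetitive estimates.
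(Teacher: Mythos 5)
Your proposal is correct and follows essentially the same route as the paper, whose entire proof of this theorem is the remark that it "can be proved in the same method used in the proof of Theorem \ref{T6}," i.e., transfer via the identity (\ref{eq25}) to reduce the problem to $E\in(\ell_{p}:c_{0})$ and then invoke the Stieglitz--Tietz characterization (your Lemma \ref{l6}, which is indeed the appropriate lemma here, the paper's citation of Lemma \ref{l2} notwithstanding). Your added remarks on the vanishing limit vector and on the role of (\ref{eq24}) in Part (iii) merely flesh out details the paper omits.
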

\begin{proof}
It is natural that Theorem \ref{T8} can be proved in the same method used in the proof of Theorem \ref{T6} with Lemma \ref{l2} and so, we omit the detail.
\end{proof}

\begin{thm}\label{T9}
Let $A=(a_{nk})$ be an infinite matrix. Then, the following statements hold:
\begin{enumerate}
\item[(i)] $A \in \left( \ell_{1}^{\lambda}(\widehat{F}): \ell_{1} \right) $ if and only if (\ref{eq18}), (\ref{eq19}) and (\ref{eq22})  hold, and
\begin{eqnarray*}\label{eq36}
\sup_{n \in \mathbb{N}_0} \sum_{n}\left| e_{nk}\right|< \infty.
\end{eqnarray*}
\item[(ii)] Let $1 < p < \infty.$ Then, $A \in \left( \ell_{p}^{\lambda}(\widehat{F}): \ell_{1} \right)$ if and only if
 (\ref{eq18})-(\ref{eq21}) hold, and
\begin{eqnarray}\label{eq37}
\sup_{F\in\mathcal{F}}\sum_{k}\left|\sum_{n \in F}e_{nk}\right|^{q}< \infty.
\end{eqnarray}
\item[(iii)] $A \in \left( \ell_{\infty}^{\lambda}(\widehat{F}): \ell_{1} \right) $ if and only if  (\ref{eq18}), (\ref{eq19}) and (\ref{eq24}) hold and
\begin{eqnarray*}\label{eq38}
\sup_{F \in \mathcal{F}} \sum_{k} \left| \sum_{n \in F}e_{nk} \right|< \infty.
\end{eqnarray*}
\end{enumerate}
\end{thm}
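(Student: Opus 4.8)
The plan is to reduce each of the three statements about $A$ acting on $\ell_\infty^\lambda(\widehat F)$, $\ell_1^\lambda(\widehat F)$ and $\ell_p^\lambda(\widehat F)$ to a corresponding statement about the associated matrix $E=(e_{nk})$ acting on the classical spaces $\ell_\infty$, $\ell_1$ and $\ell_p$, and then quote the relevant classical characterization. This mirrors exactly the strategy already used in the proofs of Theorems \ref{T2}, \ref{T6} and \ref{T8}, so the bulk of the work is the same transfer argument carried out there.

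\textbf{Step 1: transfer the action from $\widehat F$-spaces to the base spaces.} Since $A\in\big(\ell_r^\lambda(\widehat F):\ell_1\big)$ requires in particular that $Ax$ exist for every $x$ in the domain, the rows $(a_{nk})_{k\in\mathbb N_0}$ must lie in the corresponding $\beta$-dual; this is precisely what conditions \eqref{eq18}, \eqref{eq19} and \eqref{eq21} encode, via Theorem \ref{T1}. Once the rows are in the $\beta$-dual, the partial-sum identity \eqref{c32} converges as $m\to\infty$ (using \eqref{eq18}--\eqref{eq20} in the $p>1$ case, or the appropriate $d$-conditions in the $p=1$ and $p=\infty$ cases) to the key equality \eqref{eq25}, namely $\sum_k a_{nk}x_k=\sum_k e_{nk}y_k$, where $y=Ex$ ranges over all of $\ell_r$ as $x$ ranges over $\ell_r^\lambda(\widehat F)$. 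Therefore $Ax\in\ell_1$ for all $x\in\ell_r^\lambda(\widehat F)$ if and only if $Ey\in\ell_1$ for all $y\in\ell_r$, that is, $A\in\big(\ell_r^\lambda(\widehat F):\ell_1\big)$ iff $E\in(\ell_r:\ell_1)$.

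\textbf{Step 2: apply the classical characterizations.} For part (ii), with $1<p<\infty$, I apply Lemma \ref{l1}(i) to $E$: the condition $E\in(\ell_p:\ell_1)$ is equivalent to $\sup_{F\in\mathcal F}\sum_k\big|\sum_{n\in F}e_{nk}\big|^q<\infty$, which is exactly \eqref{eq37}; combining with the domain conditions \eqref{eq18}--\eqref{eq21} from Step 1 gives the stated equivalence. For part (i), with $p=1$, I apply Lemma \ref{l1}(ii) to obtain $\sup_{k\in\mathbb N_0}\sum_n|e_{nk}|<\infty$, and the remaining domain conditions for the $\ell_1^\lambda(\widehat F)$ case are \eqref{eq18}, \eqref{eq19} and \eqref{eq22} (coming from $d_3\cap d_5\cap d_6$ in Theorem \ref{T1}). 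Part (iii), the case $p=\infty$, uses the analogous characterization of $(\ell_\infty:\ell_1)$, giving $\sup_{F\in\mathcal F}\sum_k\big|\sum_{n\in F}e_{nk}\big|<\infty$ together with the $\ell_\infty^\lambda(\widehat F)$ domain conditions \eqref{eq18}, \eqref{eq19}, \eqref{eq24}.

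\textbf{The main obstacle} is the justification of the termwise passage to the limit \eqref{eq25} and the uniform-boundedness half of the converse, exactly as in Theorem \ref{T2}; once that machinery is in place the argument is almost mechanical. Concretely, for the converse I would observe that $A\in\big(\ell_r^\lambda(\widehat F):\ell_1\big)$ forces the rows into the $\beta$-dual (yielding the domain conditions) and, via \eqref{eq25}, forces $E\in(\ell_r:\ell_1)$, whence the summability condition on $e_{nk}$ follows from the relevant part of Lemma \ref{l1}. Since this is structurally identical to the already-proved Theorems \ref{T2}, \ref{T6}, \ref{T8}, I expect to be able to say that the proof follows in the same manner, substituting Lemma \ref{l1} for Lemma \ref{l2}, and omit the routine details.
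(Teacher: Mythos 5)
Your proposal is correct and follows essentially the same route as the paper: both reduce the problem, via the identity $\sum_{k}a_{nk}x_{k}=\sum_{k}e_{nk}y_{k}$ with $y=Ex$, to the statement $E\in(\ell_{r}:\ell_{1})$ and then invoke Lemma \ref{l1}, with the necessity of the domain/boundedness conditions obtained from the Theorem \ref{T2} machinery (the paper does this by noting $\ell_{1}\subset\ell_{\infty}$ and citing Theorem \ref{T2} directly, which is the same uniform-boundedness argument you defer to). The paper likewise writes out only the case $1<p<\infty$ and declares parts (i) and (iii) analogous, exactly as you do.
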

\begin{proof}
(ii) Suppose that $A$ satisfies the conditions  (\ref{eq18})-(\ref{eq21})
and (\ref{eq37}) and take any $x \in \ell_{p}^{\lambda}(\widehat{F})$ with $1<p<\infty.$ We have by Theorem \ref{T1} that $(a_{nk})_{k \in \mathbb{N}_0} \in \big[ \ell_{p}^{\lambda}(\widehat{F})\big]^{\beta}$ for all $n\in\mathbb{N}_0$ and this implies that $Ax$ exists. Besides, it follows by combining (\ref{eq37}) with Lemma \ref{l1} that $E\in\left(\ell_{p}:\ell_{1}\right)$ and so, we have $Ey \in \ell_{1}$. Also, we derive from (\ref{eq18})-(\ref{eq21}) that the relation (\ref{eq25}) holds which yields that $Ay\in\ell_{1}$ and so, $A \in \left( \ell_{p}^{\lambda}(\widehat{F}): \ell_{1}\right).$

Conversely, assume that $A \in \left( \ell_{p}^{\lambda}(\widehat{F}): \ell_{1}\right)$ with $1 <p < \infty.$ Since $\ell_{1}\subset \ell_{\infty},$ we get $A \in \left( \ell_{p}^{\lambda}(\widehat{F}): \ell_{\infty}\right).$ Thus, Theorem \ref{T2} implies the necessity of the conditions (\ref{eq18})-(\ref{eq21}) which leads to the relation (\ref{eq25}). Since $Ax \in \ell_{1},$ we deduce by (\ref{eq25}) that $Ey \in \ell_{1}$ which means $E\in \left( \ell_{p}: \ell_{1} \right).$ Now, the necessity of (\ref{eq37}) is immediate by Lemma \ref{l1}. This completes the proof of Part (ii).

 Parts (i) and (iii) can be proved in the similar way, so we omit the details.
\end{proof}

\begin{thm}\label{T10}
$A=(a_{nk})\in \left( \ell_{1}^{\lambda}(\widehat{F}): \ell_{p}\right)$ if and only if  (\ref{eq18}), (\ref{eq19}) hold, and
\begin{eqnarray}\label{eq39}
\sup_{k \in \mathbb{N}_0} \sum_{n} \left| e_{nk}\right|^{p} < \infty.
\end{eqnarray}
\end{thm}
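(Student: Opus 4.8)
The plan is to reduce the characterization of $A\in\left(\ell_{1}^{\lambda}(\widehat{F}):\ell_{p}\right)$ to the characterization of a related matrix $E=(e_{nk})$ acting on $\ell_{1}$, exactly as in the proofs of Theorems \ref{T6}, \ref{T8} and \ref{T9}. The reduction rests on two facts established earlier: first, that the rows of $A$ must lie in the $\beta$-dual $\big[\ell_{1}^{\lambda}(\widehat{F})\big]^{\beta}=d_{3}\cap d_{5}\cap d_{6}$ in order for $Ax$ to exist, and second, that under this hypothesis the $\widehat{F}$-substitution $y=Ex$ transforms the action of $A$ on $x\in\ell_{1}^{\lambda}(\widehat{F})$ into the action of $E$ on $y\in\ell_{1}$, via the identity (\ref{eq25}), namely $\sum_{k}a_{nk}x_{k}=\sum_{k}e_{nk}y_{k}$ for all $n\in\mathbb{N}_0$, with $e_{nk}$ given by (\ref{eq400}).

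First I would prove sufficiency. Assuming (\ref{eq18}), (\ref{eq19}) and (\ref{eq39}), I note that (\ref{eq18}) and (\ref{eq19}), together with the membership in $\big[\ell_{1}^{\lambda}(\widehat{F})\big]^{\beta}$ that they encode, guarantee that $Ax$ exists for every $x\in\ell_{1}^{\lambda}(\widehat{F})$ and that the partial-sum identity (\ref{c32}) passes to the limit to yield (\ref{eq25}). Condition (\ref{eq39}) is precisely the criterion of Lemma \ref{l7} for $E=(e_{nk})\in(\ell_{1}:\ell_{p})$. Hence for the associated sequence $y=Ex\in\ell_{1}$ we obtain $Ey\in\ell_{p}$, and by (\ref{eq25}) this gives $Ax=Ey\in\ell_{p}$, so $A\in\left(\ell_{1}^{\lambda}(\widehat{F}):\ell_{p}\right)$.

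For necessity, suppose $A\in\left(\ell_{1}^{\lambda}(\widehat{F}):\ell_{p}\right)$. Then in particular $Ax$ exists for every $x\in\ell_{1}^{\lambda}(\widehat{F})$, so each row $(a_{nk})_{k\in\mathbb{N}_0}$ lies in $\big[\ell_{1}^{\lambda}(\widehat{F})\big]^{\beta}$; this yields the necessity of (\ref{eq18}) and (\ref{eq19}) through Theorem \ref{T1} and forces the validity of (\ref{eq25}) for all $x\in\ell_{1}^{\lambda}(\widehat{F})$ and corresponding $y\in\ell_{1}$. Since the map $x\mapsto y=Ex$ is an isometric isomorphism of $\ell_{1}^{\lambda}(\widehat{F})$ onto $\ell_{1}$, the hypothesis $Ax\in\ell_{p}$ translates by (\ref{eq25}) into $Ey\in\ell_{p}$ for every $y\in\ell_{1}$, i.e.\ $E\in(\ell_{1}:\ell_{p})$. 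The necessity of (\ref{eq39}) is then immediate from Lemma \ref{l7}.

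The only delicate point, and the step I would treat with care, is the justification of the termwise passage from the finite identity (\ref{c32}) to the infinite identity (\ref{eq25}): one must verify that the boundary term $\dfrac{f_{m+1}^{2}\lambda_{m}}{f_{m}f_{m+1}(\lambda_{m}-\lambda_{m-1})}a_{nm}y_{m}$ tends to $0$ as $m\to\infty$ and that the series $\sum_{k}e_{nk}y_{k}$ converges. Both follow from the membership of the rows in $\big[\ell_{1}^{\lambda}(\widehat{F})\big]^{\beta}=d_{3}\cap d_{5}\cap d_{6}$ together with $y\in\ell_{1}$: condition $d_{5}$ controls the boundary term while $d_{6}$ (the uniform boundedness of $|\bar a_{k}(n)|$, equivalently of $|e_{nk}|$) combined with $y\in\ell_{1}$ secures absolute convergence of $\sum_{k}e_{nk}y_{k}$. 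Everything else is the routine application of Lemma \ref{l7}, so I expect no substantive obstacle beyond this convergence bookkeeping, which parallels the arguments already carried out in Theorems \ref{T2} and \ref{T9}.
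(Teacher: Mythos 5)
Your proposal is correct and follows essentially the same route as the paper: both reduce the problem via the relation (\ref{eq25}) to the statement $E=(e_{nk})\in(\ell_{1}:\ell_{p})$ and then invoke Lemma \ref{l7}, with the passage from (\ref{c32}) to (\ref{eq25}) justified by the rows lying in $\big[\ell_{1}^{\lambda}(\widehat{F})\big]^{\beta}$. The only cosmetic differences are that the paper proves sufficiency by an explicit Minkowski-inequality estimate rather than citing Lemma \ref{l7} directly, and obtains the necessity of (\ref{eq18}) and (\ref{eq19}) by first passing through $A\in\left(\ell_{1}^{\lambda}(\widehat{F}):\ell_{\infty}\right)$ and Theorem \ref{T2}, whereas you read them off the $\beta$-dual membership directly; both variants are valid.
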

\begin{proof}
Suppose that the conditions (\ref{eq18}), (\ref{eq19}) and (\ref{eq39}) hold, and take $x\in\ell_{1}^{\lambda}(\widehat{F}).$ Then, we have by Theorem \ref{T1} that $(a_{nk})_{k \in \mathbb{N}_0} \in \big[\ell_{1}^{\lambda}(\widehat{F})\big]^{\beta}$ for all $n\in\mathbb{N}_0$ which implies that $Ax$ exists. From (\ref{eq39}), we have
\begin{eqnarray*}\label{eq40}
\sup_{k \in \mathbb{N}_0}\left|e_{nk}\right|\leq\sup_{k\in\mathbb{N}_0}\left(\sum_{n}\left|e_{nk}\right|^{p} \right)^{1/p} < \infty~\textrm{ for each }~ n\in \mathbb{N}_0.
\end{eqnarray*}
Hence, $\sum_{n}\left| e_{nk}\right|$ absolutely converges for each fixed $n \in \mathbb{N}_0.$ Since (\ref{eq18}) and (\ref{eq19}) hold, therefore as $m \rightarrow \infty$ in (\ref{c32}), the relation (\ref{eq25}) holds. Thus, by applying Minkowski's inequality and using (\ref{eq25}) and (\ref{eq39}) we
obtain that
\begin{eqnarray*}\label{eq41}
\left( \sum_{n}\left| \sum_{k} a_{nk}x_{k}\right|^{p} \right)^{1/p}
=\left( \sum_{n}\left| \sum_{k} e_{nk}y_{k}\right|^{p} \right)^{1/p}
\leq \sum_{k} \left| y_{k}\right|
\left( \sum_{n}\left|  e_{nk}\right|^{p} \right)^{1/p}
< \infty
\end{eqnarray*}
which means $Ax\in\ell_{p},$ that is, $A=(a_{nk})\in \left( \ell_{1}^{\lambda}(\widehat{F}): \ell_{p}\right).$

Conversely, let $A \in \left( \ell_{1}^{\lambda}(\widehat{F}): \ell_{p}\right).$ Since
$\ell_{p} \subset \ell_{\infty}$, $A \in \left( \ell_{1}^{\lambda}(\widehat{F}): \ell_{\infty}\right).$
Thus, Theorem \ref{T2} gives the necessity of (\ref{eq18}) and (\ref{eq19}) by the relation (\ref{eq25}). Since $Ax \in \ell_{p},$ we deduce by (\ref{eq25}) that $Ey \in \ell_{p}$ which means $E\in \left( \ell_{1}: \ell_{p}\right).$ Now, the  necessity of (\ref{eq39}) is immediate by Lemma \ref{l7}. This step completes the proof.
\end{proof}

\begin{thm}\label{T7}
Let $1 < p < \infty.$ Then, $A=(a_{nk})\in \left( \ell_{\infty}^{\lambda}(\widehat{F}): \ell_{p}\right)$
if and only if (\ref{eq18}) and (\ref{eq19})  hold, and
\begin{eqnarray*}
&&\sum_{k} \left| e_{nk}\right| ~\mbox{converges~for~all}~ n \in \mathbb{N}_0,\\
&&\sup_{K\in \mathcal{F}}\sum_{n} \left| \sum_{k \in K} e_{nk}\right|^{p} < \infty.
\end{eqnarray*}
\end{thm}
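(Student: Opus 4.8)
The plan is to reduce the characterization to the already-settled matrix class $(\ell_\infty:\ell_p)$, following the template of Theorems \ref{T2}, \ref{T9} and \ref{T10}. The linear isomorphism $x\mapsto y=Ex$ between $\ell_\infty^\lambda(\widehat{F})$ and $\ell_\infty$, together with the fundamental identity (\ref{eq25}), $\sum_k a_{nk}x_k=\sum_k e_{nk}y_k$, will turn the statement $A\in\left(\ell_\infty^\lambda(\widehat{F}):\ell_p\right)$ into the equivalent assertion $E=(e_{nk})\in(\ell_\infty:\ell_p)$; Lemma \ref{l8} then supplies the Kronecker-type supremum condition $\sup_{K\in\mathcal{F}}\sum_n\big|\sum_{k\in K}e_{nk}\big|^p<\infty$.

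For sufficiency I would assume the four conditions and fix $x\in\ell_\infty^\lambda(\widehat{F})$, so that $y=Ex\in\ell_\infty$. Condition (\ref{eq18}) makes each $e_{nk}$ well-defined and, together with (\ref{eq19}), permits passing to the limit $m\to\infty$ in the partial-sum identity (\ref{c32}); the convergence of $\sum_k|e_{nk}|$ guarantees that $\sum_k e_{nk}y_k$ converges absolutely for the bounded sequence $y$, which establishes (\ref{eq25}) and, in particular, the existence of $Ax$. Since the supremum condition is exactly the hypothesis of Lemma \ref{l8}, we obtain $E\in(\ell_\infty:\ell_p)$ and hence $Ey\in\ell_p$; therefore $Ax=Ey\in\ell_p$, i.e. $A\in\left(\ell_\infty^\lambda(\widehat{F}):\ell_p\right)$.

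For necessity I would argue conversely. If $A\in\left(\ell_\infty^\lambda(\widehat{F}):\ell_p\right)$, then $\ell_p\subset\ell_\infty$ yields $A\in\left(\ell_\infty^\lambda(\widehat{F}):\ell_\infty\right)$, and Theorem \ref{T2}(iii) forces (\ref{eq18}) and (\ref{eq19}). The mere existence of $Ax$ for every $x$ drives each row $(e_{nk})_{k\in\mathbb{N}_0}$ into $(\ell_\infty)^\beta=\ell_1$, i.e. the convergence of $\sum_k|e_{nk}|$, so (\ref{eq25}) holds again. From $Ax\in\ell_p$ we then read off $Ey\in\ell_p$ for every $y\in\ell_\infty$, that is $E\in(\ell_\infty:\ell_p)$, whence Lemma \ref{l8} delivers the remaining supremum condition and the equivalence is complete.

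The delicate step, as in the companion theorems, is the justification of (\ref{eq25}) by letting $m\to\infty$ in (\ref{c32}): one must show that the boundary term $\frac{f_{m+1}^2\lambda_m}{f_mf_{m+1}(\lambda_m-\lambda_{m-1})}a_{nm}y_m$ tends to $0$ and that $\sum_k e_{nk}y_k$ converges for the bounded sequence $y$. Because $y$ here is only bounded, rather than decaying as in the $\ell_1$- and $\ell_p$-domain cases of Theorems \ref{T9} and \ref{T10}, the absolute summability of each row of $E$ is precisely what forces $\sum_k e_{nk}y_k$ to converge; this is why the convergence of $\sum_k|e_{nk}|$ is recorded as a separate hypothesis, even though it can be recovered a posteriori from the supremum condition by testing on singletons and splitting into positive and negative parts.
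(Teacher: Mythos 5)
Your overall strategy---turning the statement into $E\in(\ell_{\infty}:\ell_{p})$ via the identity (\ref{eq25}) and then invoking Lemma \ref{l8}---is exactly the route the paper intends: its entire proof is a one-line reference to the method of Theorem \ref{T10} with Lemma \ref{l8}, and your necessity half is sound. The genuine gap is in the sufficiency half, at precisely the step you flag as delicate and then only half-resolve. Absolute summability of the rows of $E$ makes the \emph{limit} series $\sum_{k}e_{nk}y_{k}$ converge, but it does not justify letting $m\to\infty$ in (\ref{c32}) when $y\in\ell_{\infty}$ is not a null sequence. Two things go wrong. First, the boundary term $\frac{f_{m+1}^{2}\lambda_{m}}{f_{m}f_{m+1}(\lambda_{m}-\lambda_{m-1})}a_{nm}y_{m}$ is merely \emph{bounded} under (\ref{eq19}), since $y_{m}\not\to 0$; nothing you assume makes it converge, let alone to $0$. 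Second, and more seriously, the partial sums in (\ref{c32}) involve $e_{nk}(m)$, not $e_{nk}$, and for an arbitrary bounded $y$ the convergence $\sum_{k=0}^{m-1}e_{nk}(m)y_{k}\to\sum_{k}e_{nk}y_{k}$ requires $\lim_{m\to\infty}\sum_{k}\left|e_{nk}(m)-e_{nk}\right|=0$, which is condition (\ref{eq24}), i.e.\ the $d_{7}$-part of the $\beta$-dual $\big[\ell_{\infty}^{\lambda}(\widehat{F})\big]^{\beta}=d_{4}\cap d_{7}\cap d_{8}$ of Theorem \ref{T1}. Pointwise convergence $e_{nk}(m)\to e_{nk}$ (which (\ref{eq18}) gives) plus absolute convergence of the limit series is not enough for bounded $y$. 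This is exactly why every companion result in the paper whose domain is $\ell_{\infty}^{\lambda}(\widehat{F})$---Theorems \ref{T2}(iii), \ref{T6}(iii), \ref{T8}(iii) and \ref{T9}(iii)---carries (\ref{eq24}) as a hypothesis.

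Moreover, the missing condition cannot be recovered from the four hypotheses you use, so the gap is not patchable as written. Writing $t_{k}=\sum_{j>k}f_{j+1}^{2}a_{nj}$, $u_{k}=\lambda_{k}/[(\lambda_{k}-\lambda_{k-1})f_{k}f_{k+1}]$ and $v_{k}=\lambda_{k}/[(\lambda_{k+1}-\lambda_{k})f_{k+1}f_{k+2}]$, one has $e_{nk}=u_{k}t_{k-1}-v_{k}t_{k}$ and $e_{nk}(m)-e_{nk}=-(u_{k}-v_{k})t_{m}$. Now take a matrix with a single nonzero row, choose $\lambda$ with alternating unit and tiny increments (say $\lambda_{2j}-\lambda_{2j-1}=4^{-j}/(f_{2j}f_{2j+1})$, the other increments $1$), and choose the row so that $t_{2j-1}=0$ and $t_{2j}=1/u_{2j}$. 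Then (\ref{eq18}), (\ref{eq19}), $\sum_{k}|e_{nk}|<\infty$ and the supremum condition of Lemma \ref{l8} all hold, yet $\sum_{k<m}\left|e_{nk}(m)-e_{nk}\right|=|t_{m}|\sum_{k<m}|u_{k}-v_{k}|$ stays bounded away from $0$ along even $m$; by Theorem \ref{T1} this row is not in $\big[\ell_{\infty}^{\lambda}(\widehat{F})\big]^{\beta}$, so $Ax$ fails even to exist for some $x\in\ell_{\infty}^{\lambda}(\widehat{F})$. In other words, the defect is inherited from the statement itself, which (unlike Theorem \ref{T9}(iii)) omits (\ref{eq24}); the paper's one-line proof ``as in Theorem \ref{T10}'' glosses over this because in Theorem \ref{T10} the transformed sequence $y$ lies in $\ell_{1}\subset c_{0}$, which kills both problematic terms. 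A correct sufficiency proof must either add (\ref{eq24}) to the hypotheses or establish membership of the rows of $A$ in the $\beta$-dual by some other means; your argument does neither.
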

\begin{proof}
This is obtained in the same way as done in Theorem \ref{T10} by Lemma \ref{l8}. So, we omit the details.
\end{proof}
\begin{lem} \cite{Altay,16} \label{l10}
Let $X$ and $Y$ be any two sequence spaces, $A$ be an infinite matrix and $B$ be a triangle. Then, $A \in \left(X: Y_{B}\right)$
if and only if $BA \in \left(X :Y\right).$
\end{lem}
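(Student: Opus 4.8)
The plan is to reduce the claim to the single elementary but crucial identity $B(Ax)=(BA)x$, which holds for every $x$ for which $Ax$ exists precisely because $B$ is a triangle. Recall that $Y_{B}=\{z\in\omega:Bz\in Y\}$, so that $A\in(X:Y_{B})$ means that $Ax$ exists and $B(Ax)\in Y$ for every $x\in X$, whereas $BA\in(X:Y)$ means that $(BA)x$ exists and lies in $Y$ for every $x\in X$. Thus the whole statement comes down to comparing the two sequences $B(Ax)$ and $(BA)x$.

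First I would fix $x=(x_{k})\in X$ for which $Ax$ exists, write $Ax=z=(z_{k})$ with $z_{k}=\sum_{j}a_{kj}x_{j}$, and compute the $n$th coordinate of $B(Ax)$. Since $B=(b_{nk})$ is a triangle, $b_{nk}=0$ for $k>n$, so
\[
(B(Ax))_{n}=\sum_{k=0}^{n}b_{nk}z_{k}=\sum_{k=0}^{n}b_{nk}\Big(\sum_{j}a_{kj}x_{j}\Big).
\]
The outer sum over $k$ is finite and each inner series converges because $Ax$ exists; hence this finite linear combination of convergent series may be rearranged termwise to give
\[
(B(Ax))_{n}=\sum_{j}\Big(\sum_{k=0}^{n}b_{nk}a_{kj}\Big)x_{j}=\sum_{j}(BA)_{nj}x_{j}=((BA)x)_{n},
\]
where $(BA)_{nj}=\sum_{k=0}^{n}b_{nk}a_{kj}$ is a well-defined finite sum, again because $B$ is a triangle. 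This shows simultaneously that $(BA)x$ exists and that $B(Ax)=(BA)x$.

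With this identity in hand the equivalence is immediate: $A\in(X:Y_{B})$ holds if and only if for every $x\in X$ the sequence $Ax$ exists and $B(Ax)\in Y$, which by the identity is the same as saying that $(BA)x$ exists and belongs to $Y$ for every $x\in X$, i.e. $BA\in(X:Y)$. Reading the argument in either direction yields both implications at once. For the converse I would also use that $B$, being a triangle, admits a unique triangle inverse $B^{-1}$ (as exploited earlier in the paper for $E$): if $BA\in(X:Y)$ then $(BA)x$ exists, and the same finite-sum rearrangement applied to $B^{-1}$ gives $B^{-1}((BA)x)=(B^{-1}(BA))x=Ax$, so that $Ax$ exists and the class $(X:Y_{B})$ is meaningful.

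I expect the only genuine point of care to be the interchange of summation used to pass from $B(Ax)$ to $(BA)x$. This is exactly where the hypothesis that $B$ is a triangle is indispensable: it collapses the outer summation to finitely many terms, thereby legitimizing the rearrangement without any additional convergence assumptions and also guaranteeing that the matrix product $BA$ is entrywise well defined. Everything else is a routine unwinding of the definitions of $Y_{B}$ and of the matrix classes $(X:Y_{B})$ and $(X:Y)$.
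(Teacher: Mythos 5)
Your proposal is correct. Note that the paper itself offers no proof of this lemma: it is quoted verbatim with citations to the Ba\c{s}ar--Altay papers, where the argument is essentially the one you give, namely the identity $B(Ax)=(BA)x$ justified by the fact that a triangle collapses the outer sum to finitely many terms. You also handle the one genuinely delicate point that a careless write-up would miss: in the direction $BA\in(X:Y)\Rightarrow A\in(X:Y_{B})$, the existence of $(BA)x$ does not by itself give the existence of $Ax$, and your appeal to the triangle inverse $B^{-1}$, together with the same finite-sum rearrangement applied to $B^{-1}\bigl((BA)x\bigr)$, is exactly what closes that gap. So the proof is complete and matches the standard argument behind the cited result.
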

\begin{cor}
Let $A=(a_{nk})$ be an infinite matrix and define the matrix $C=(c_{nk})$ by
\[c_{nk}=\frac{1}{\lambda_{n}}\sum_{i=0}^{n}\left(\lambda_{i}-\lambda_{i-1} \right)\left( \frac{f_{i}}{f_{i+1}}a_{ik}-\frac{f_{i+1}}{f_{i}}a_{i-1,k}\right)\]
for all $n,k \in \mathbb{N}_0.$ By applying Lemma \ref{l10} we get, $A$ belongs to any one of the classes
 $\left( c_{0}:\ell_{p}^{\lambda}(\widehat{F}) \right),$
 $\left( c:\ell_{p}^{\lambda}(\widehat{F}) \right),$
 $\left( \ell_{\infty}:\ell_{p}^{\lambda}(\widehat{F}) \right),$
 $\left( \ell_{1}:\ell_{p}^{\lambda}(\widehat{F}) \right),$
 $\left( \ell_{p}:\ell_{1}^{\lambda}(\widehat{F}) \right)$ and
 $\left( \ell_{p}:\ell_{\infty}^{\lambda}(\widehat{F}) \right)$
 if and only if the matrix $C$ belongs to the classes
 $\left( c_{0}:\ell_{p} \right),$
  $\left( c:\ell_{p} \right),$
  $\left( \ell_{\infty}:\ell_{p} \right),$
  $\left( \ell_{1}:\ell_{p} \right),$
  $\left( \ell_{p}: \ell_{1}\right)$ and
  $\left( \ell_{p}:\ell_{\infty} \right),$ respectively; where $1\leq p\leq\infty.$
\end{cor}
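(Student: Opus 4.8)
The plan is to read this off directly from Lemma~\ref{l10} with the triangle $B=E$, the only genuine work being the identification $C=EA$. By (\ref{c1}) we already have $\ell_{p}^{\lambda}(\widehat{F})=(\ell_{p})_{E}$ and $\ell_{\infty}^{\lambda}(\widehat{F})=(\ell_{\infty})_{E}$ (the case $p=1$ giving $\ell_{1}^{\lambda}(\widehat{F})=(\ell_{1})_{E}$ as well), and the matrix $E$ of (\ref{f}) is a triangle since its diagonal entries $\frac{1}{\lambda_{n}}(\lambda_{n}-\lambda_{n-1})\frac{f_{n}}{f_{n+1}}$ are nonzero. Thus every target space occurring in the corollary is the $E$-domain of a classical space, which is exactly the setting Lemma~\ref{l10} is designed for; and all the domains $c_{0},c,\ell_{\infty},\ell_{1},\ell_{p}$ are sequence spaces, so the lemma applies in each case.

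First I would verify $C=EA$. The $(n,k)$ entry of the product $EA$ is the result of letting the $n$th row of $E$ act on the $k$th column $(a_{ik})_{i\in\mathbb{N}_0}$ of $A$. Substituting $x_{i}=a_{ik}$ into the column action $E_{n}(x)=\frac{1}{\lambda_{n}}\sum_{i=0}^{n}(\lambda_{i}-\lambda_{i-1})\big(\frac{f_{i}}{f_{i+1}}x_{i}-\frac{f_{i+1}}{f_{i}}x_{i-1}\big)$ recorded just before (\ref{c1}) gives
\[
(EA)_{nk}=\frac{1}{\lambda_{n}}\sum_{i=0}^{n}(\lambda_{i}-\lambda_{i-1})\left(\frac{f_{i}}{f_{i+1}}a_{ik}-\frac{f_{i+1}}{f_{i}}a_{i-1,k}\right)=c_{nk},
\]
so $C=EA$. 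This single computation is essentially the entire content of the corollary.

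Finally I would apply Lemma~\ref{l10} once for each of the six classes, always with $B=E$. For $X\in\{c_{0},c,\ell_{\infty},\ell_{1}\}$ and target $(\ell_{p})_{E}$ the lemma gives $A\in\big(X:\ell_{p}^{\lambda}(\widehat{F})\big)$ iff $EA=C\in(X:\ell_{p})$; for the target $(\ell_{1})_{E}$ with $X=\ell_{p}$ it gives $A\in\big(\ell_{p}:\ell_{1}^{\lambda}(\widehat{F})\big)$ iff $C\in(\ell_{p}:\ell_{1})$; and for the target $(\ell_{\infty})_{E}$ with $X=\ell_{p}$ it gives $A\in\big(\ell_{p}:\ell_{\infty}^{\lambda}(\widehat{F})\big)$ iff $C\in(\ell_{p}:\ell_{\infty})$. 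Collecting these six equivalences is precisely the asserted correspondence. There is essentially no obstacle here: once $C=EA$ is noted and $E$ is seen to be a triangle, Lemma~\ref{l10} supplies everything, and the only mild care needed is the bookkeeping of the shifted index $a_{i-1,k}$ in the column action, which the telescoped formula above handles automatically.
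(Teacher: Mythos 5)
Your proposal is correct and matches the paper's (implicit) argument exactly: the paper states the corollary as an immediate consequence of Lemma \ref{l10}, and your verification that $C=EA$ with $E$ the triangle of (\ref{f}), followed by six applications of the lemma with $B=E$, is precisely the intended reasoning. Spelling out the computation $(EA)_{nk}=c_{nk}$ and the nonvanishing of the diagonal entries of $E$ only makes explicit what the paper leaves to the reader.
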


 \begin{cor}
 Let $A=(a_{nk})$ be an infinite matrix and define the matrix $C'=(c'_{nk})$ by
 $$ c'_{nk}=\frac{1}{\lambda'_{n}}\sum_{i=0}^{n}\left(\lambda'_{i}-\lambda'_{i-1} \right)\left( \frac{f_{i}}{f_{i+1}}a_{ik}-\frac{f_{i+1}}{f_{i}}a_{i-1,k}\right); \; (n,k \in \mathbb{N}_0).  $$
Then, the necessary and sufficient conditions such that
  $A$ belongs to any one of the classes
  $\left(\ell_{p}^{\lambda}(\widehat{F}): \ell_{\infty}^{\lambda'}(\widehat{F}) \right),$
  $\left(\ell_{p}^{\lambda}(\widehat{F}): c_{0}^{\lambda'}(\widehat{F})  \right),$
  $\left(\ell_{p}^{\lambda}(\widehat{F}): c^{\lambda'}(\widehat{F})  \right),$
   $\left(\ell_{p}^{\lambda}(\widehat{F}): \ell_{1}^{\lambda'}(\widehat{F})\right),$
   $\left(\ell_{1}^{\lambda}(\widehat{F}): \ell_{p}^{\lambda'}(\widehat{F})\right)$
   and
   $\left(\ell_{\infty}^{\lambda}(\widehat{F}): \ell_{p}^{\lambda'}(\widehat{F}) \right),$
   where $1\leq p \leq \infty$ are obtained from the respective Theorems \ref{T2} to \ref{T7} by replacing the entries of matrix $A$ by those of $C$ and $\lambda'=\left(\lambda'_{k}\right) $ is a strictly increasing sequence of positive reals tending to infinity and $E=\left(e_{nk}\right)$ is a triangle defined by (\ref{f}) with $\lambda'$ instead of $\lambda.$
 \end{cor}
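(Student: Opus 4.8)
The plan is to recognise each target space as the matrix domain of a single triangle built from $\lambda'$, and then to transfer the membership problem to the classical range spaces via Lemma \ref{l10}.

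First I would observe that, exactly as in the derivation of (\ref{c1}), the spaces formed with $\lambda'$ are matrix domains of the triangle $E'=(e'_{nk})$, where $E'$ is the matrix (\ref{f}) written with $\lambda'$ in place of $\lambda$; that is, $\ell_{\infty}^{\lambda'}(\widehat{F})=(\ell_{\infty})_{E'}$, $c_{0}^{\lambda'}(\widehat{F})=(c_{0})_{E'}$, $c^{\lambda'}(\widehat{F})=(c)_{E'}$, $\ell_{1}^{\lambda'}(\widehat{F})=(\ell_{1})_{E'}$ and $\ell_{p}^{\lambda'}(\widehat{F})=(\ell_{p})_{E'}$. Since $E'$ is a triangle, I would then apply Lemma \ref{l10} with $B=E'$ and $X\in\{\ell_{p}^{\lambda}(\widehat{F}),\ell_{1}^{\lambda}(\widehat{F}),\ell_{\infty}^{\lambda}(\widehat{F})\}$ to obtain, for each classical range space $Z\in\{\ell_{\infty},c_{0},c,\ell_{1},\ell_{p}\}$, the equivalence
\[ A\in\big(X:(Z)_{E'}\big)\iff E'A\in(X:Z). \]

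The single computation to carry out is the identification $E'A=C'$. Treating the $k$th column $(a_{ik})_{i\in\mathbb{N}_0}$ of $A$ as the input sequence in the formula (\ref{c0}) for the $E'$-transform (written with $\lambda'$), its $n$th coordinate is
\[ (E'A)_{nk}=\frac{1}{\lambda'_{n}}\sum_{i=0}^{n}\left(\lambda'_{i}-\lambda'_{i-1}\right)\left(\frac{f_{i}}{f_{i+1}}a_{ik}-\frac{f_{i+1}}{f_{i}}a_{i-1,k}\right)=c'_{nk} \]
for all $n,k\in\mathbb{N}_0$, so that $E'A=C'$. This identity is the heart of the argument; the only care required is in tracking the indices of the telescoping sum defining $E'$, precisely as in the passage from (\ref{f}) to (\ref{c0}).

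Combining these facts, $A$ belongs to the class with range $Z^{\lambda'}(\widehat{F})=(Z)_{E'}$ if and only if $C'=E'A$ belongs to $(X:Z)$, and the latter membership is characterised by the appropriate result among Theorems \ref{T2}, \ref{T8}, \ref{T6}, \ref{T9}, \ref{T10} and \ref{T7}, read with the entries of $C'$ in place of those of $A$. Matching the six range spaces $\ell_{\infty},c_{0},c,\ell_{1}$ (with domain $\ell_{p}^{\lambda}(\widehat{F})$) and $\ell_{p}$ (with domains $\ell_{1}^{\lambda}(\widehat{F})$ and $\ell_{\infty}^{\lambda}(\widehat{F})$) to these theorems completes the proof. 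I do not expect any genuine obstacle beyond the bookkeeping involved in verifying $E'A=C'$.
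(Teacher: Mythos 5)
Your proposal is correct and follows exactly the argument the paper intends: identify $Z^{\lambda'}(\widehat{F})=(Z)_{E'}$ for the triangle $E'$ given by (\ref{f}) with $\lambda'$, apply Lemma \ref{l10} with $B=E'$ to reduce membership to $E'A\in(X:Z)$, verify $E'A=C'$, and then invoke Theorems \ref{T2}--\ref{T7} with the entries of $C'$ in place of those of $A$. This matches the paper's (essentially unwritten) justification of the corollary, so there is nothing to add.
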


 \section{Some geometric properties of $\ell_{p}^{\lambda}(\widehat{F})$, $(1 < p < \infty)$}

 In this section, we study some geometric properties of the space
 $\ell_{p}^{\lambda}(\widehat{F}),$ where $1<p<\infty.$

A Banach space $X$ is said to have the \textit{Banach-Saks property} if every bounded sequence $(x_{n})$ in $X$ admits a subsequence $(z_{n})$ such that the sequence $C_{1}(z)$ is convergent in the norm of $X,$ (see \cite{J}), where $C_{1}(z)=(t_{n})$ defined by
\begin{eqnarray*}
t_{n}=\frac{1}{n+1}\left( z_{0}+z_{1}+\cdots +z_{n}\right)
\end{eqnarray*}
for all $n\in\mathbb{N}_0.$

A Banach space $X$ is said to have the \textit{weak Banach-Saks property} whenever, given any weakly null sequence $(x_{n})$ in $X,$ there exists a subsequence $(z_{n})$ of $(x_{n})$ such that the sequence $(t_{n})$ is strongly convergent to zero.
\par  Garc\'{\i}a-Falset \cite{G} introduced the following coefficient:
\begin{eqnarray*}
R(X)=\sup\left\lbrace \liminf\limits_{n \rightarrow \infty} \| x_{n}-x\| : (x_{n}) \subset B(X),~ x_{n} \rightarrow 0 \mbox{~weakly}, ~x \in  B(X) \right\rbrace,
\end{eqnarray*}
where $B(X)$ denotes the unit ball of $X.$
\begin{rem}\cite{G1}
A Banach space $X$ with $R(X)<2$ has the weak fixed point property.
\end{rem}

Let $1< p< \infty.$ A Banach space is said to have the Banach-Saks type $p$ if every weakly null sequence $(x_{k})$ has a subsequence $(x_{k_{l}})$ such that for some $C>0,$
\begin{eqnarray*}
\left\|\sum_{l=0}^{n}x_{k_{l}}\right\|<C(n+1)^{1/n}
\end{eqnarray*}
for all $n\in\mathbb{N}_0$ (see \cite{K1}).

\begin{thm}
Let $1<p< \infty.$ The space $\ell_{p}^{\lambda}(\widehat{F})$ has Banach-Saks type $p.$
\end{thm}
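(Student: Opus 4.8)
The plan is to prove that $\ell_p^\lambda(\widehat{F})$ has Banach-Saks type $p$ by transporting the problem to the sequence space $\ell_p$ via the isometric isomorphism $T=E$ established in the earlier theorems, and then running the classical gliding-hump construction on the $\widehat{F}$-transforms. Recall that $\|x\|_{\ell_p^\lambda(\widehat{F})}=\|Ex\|_p$, so a sequence $(x_n)$ is bounded (resp.\ weakly null) in $\ell_p^\lambda(\widehat{F})$ precisely when $(Ex_n)$ is bounded (resp.\ weakly null) in $\ell_p$; here I would use that the coordinate functionals are continuous on the $BK$-space, so weak convergence is compatible with coordinatewise behaviour through the isomorphism. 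Thus it suffices to produce, from any weakly null sequence $(z_n)$ in $\ell_p^\lambda(\widehat{F})$, a subsequence whose partial sums grow no faster than $C(n+1)^{1/p}$ in norm.

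First I would fix a weakly null sequence $(z_n)$ in the unit ball and set $y_n=Ez_n\in\ell_p$, a weakly null sequence that is bounded in $\ell_p$. The key step is a standard diagonal/gliding-hump argument: since $(y_n)$ is weakly null it converges to zero coordinatewise, and being norm-bounded its tails are uniformly small, so I would inductively extract a subsequence $(y_{n_l})$ together with a strictly increasing sequence of integers $(m_l)$ such that each $y_{n_l}$ is concentrated, up to an error controlled by a summable sequence $(\varepsilon_l)$ with $\sum_l\varepsilon_l<\infty$, on the block of coordinates between $m_{l-1}$ and $m_l$. Concretely, I would arrange that the initial segment of $y_{n_l}$ before $m_{l-1}$ has $\ell_p$-norm at most $\varepsilon_l$ and that the tail after $m_l$ also has $\ell_p$-norm at most $\varepsilon_l$, so that the essential supports are almost disjoint.

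Next I would estimate $\big\|\sum_{l=0}^n y_{n_l}\big\|_p$. Splitting each $y_{n_l}$ into its three pieces (head, middle block, tail) and using the near-disjointness of the middle blocks, the block pieces combine through the $p$-additivity of $\|\cdot\|_p$ over disjoint supports to give a contribution bounded by a constant multiple of $(n+1)^{1/p}$, while the head and tail errors sum to at most $2\sum_l\varepsilon_l<\infty$, a constant independent of $n$. Combining these yields a bound of the form $\big\|\sum_{l=0}^n y_{n_l}\big\|_p\le C(n+1)^{1/p}$ for a suitable $C>0$ absorbing the uniform bound on $\|y_n\|_p$ and the convergent error series. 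Since $E$ is linear, $\sum_{l=0}^n z_{n_l}$ satisfies $E\big(\sum_{l=0}^n z_{n_l}\big)=\sum_{l=0}^n y_{n_l}$, and the norm identity $\|\cdot\|_{\ell_p^\lambda(\widehat{F})}=\|E(\cdot)\|_p$ transfers the estimate verbatim back to $\ell_p^\lambda(\widehat{F})$.

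The main obstacle I anticipate is making the gliding-hump extraction rigorous, in particular verifying that weak nullity in $\ell_p^\lambda(\widehat{F})$ really does pass to weak nullity of the transforms $(y_n)$ in $\ell_p$ and hence to coordinatewise convergence to zero; this is where the $BK$-structure and the isometric isomorphism must be invoked carefully, since weak convergence is not automatically preserved under arbitrary linear maps but is preserved here because $T=E$ is a surjective linear isometry (hence its adjoint identifies the dual pairings). Once coordinatewise convergence and uniform boundedness of the tails are in hand, the remaining block estimate is the routine computation exploiting disjoint supports in $\ell_p$, so I would keep that part brief and concentrate the detailed writing on the inductive choice of $(n_l)$ and $(m_l)$.
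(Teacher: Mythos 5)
Your proposal is correct, and it reaches the required bound $\bigl\|\sum_{l=0}^{n}z_{n_{l}}\bigr\|_{\ell_{p}^{\lambda}(\widehat{F})}\leq C(n+1)^{1/p}$ by the same underlying gliding-hump strategy as the paper, but you run the hump on the other side of the isomorphism, and that difference is substantive. The paper works intrinsically: it splits each $u_{j}=x_{n_{j}}$ along its \emph{own} coordinates into a head $\sum_{i\leq t_{j-1}}u_{j}(i)e^{(i)}$, a middle block and a tail, measures each piece in $\|\cdot\|_{\ell_{p}^{\lambda}(\widehat{F})}$, and bounds the sum of the middle blocks by $\sum_{j}\sum_{i}|E_{i}(u_{j})|^{p}\leq n+1$. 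You instead push the weakly null sequence forward to $y_{n}=Ez_{n}\in\ell_{p}$, run the standard hump on the coordinates of $y_{n}$, use the exact $p$-additivity of $\|\cdot\|_{p}$ over disjoint supports, and pull the estimate back through $\|x\|_{\ell_{p}^{\lambda}(\widehat{F})}=\|Ex\|_{p}$. What your route buys is rigor at precisely the delicate points of the intrinsic version: in $\ell_{p}$ the coordinate vectors form a basis with AK, so your heads and tails genuinely tend to zero in norm, and disjointly supported blocks are exactly $p$-additive. In $\ell_{p}^{\lambda}(\widehat{F})$ none of this is automatic: $\{e^{(i)}\}$ is not the basis constructed for this space (that is $\{b^{(k)}\}$); a finitely supported sequence $v$ need not even belong to $\ell_{p}^{\lambda}(\widehat{F})$ when $(1/\lambda_{n})\notin\ell_{p}$, since $E_{n}(v)$ decays only like $1/\lambda_{n}$ (compare the paper's own computation $E_{n}(e^{(0)})=(3\lambda_{0}-2\lambda_{1})/\lambda_{n}$); and because $E$ is a triangle, the $E$-transforms of disjoint coordinate blocks of $x$ are not disjointly supported, so the paper's middle-block inequality requires further justification as stated. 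One simplification to your write-up: the weak-null transfer you worry about needs no isometry or surjectivity at all, since any bounded linear operator $L$ is weak--weak continuous ($g(Lz_{n})=(L^{*}g)(z_{n})\to 0$ for every $g$ in the dual); in effect your argument proves the cleaner general fact that Banach--Saks type $p$ passes through isometric isomorphism from $\ell_{p}$, which is all the theorem needs.
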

\begin{proof}
Let $(\epsilon_{n})$ be a sequence of positive numbers for which
$\sum_{n=1}^{\infty}\epsilon_{n} \leq 1/2.$ Let $(x_{n})$ be a weakly null sequence in $B\left( \ell_{p}^{\lambda}(\widehat{F})\right).$ Let $u_{0}=x_{0}$ and $u_{1}=x_{n_{1}}.$
Then, there exists $t_{1}\in \mathbb{N}_0$ such that
\begin{eqnarray*}
\left\|\sum_{i=t_{1}+1}^{\infty} u_{1}(i)e^{(i)}\right\|_{\ell_{p}^{\lambda}(\widehat{F})}< \epsilon_{1}.
\end{eqnarray*}
Since $(x_{n})$ is a weakly null sequence implies $x_{n}\rightarrow 0$ (coordinatewise), there exists $n_{2}\in \mathbb{N}_0$ such that
\begin{eqnarray*}
\left\|\sum_{i=0}^{t_{1}} x_{n}(i)e^{(i)}\right\|_{\ell_{p}^{\lambda}(\widehat{F})}< \epsilon_{1},
\end{eqnarray*}
where $n \geq n_{2}.$ Set $u_{2}=x_{n_{2}}.$ Then, there exists $t_{2}> t_{1}$ such that
\begin{eqnarray*}
\left\|\sum_{i=t_{2}+1}^{\infty} u_{2}(i)e^{(i)}\right\|_{\ell_{p}^{\lambda}(\widehat{F})}< \epsilon_{2}.
\end{eqnarray*}
By using the fact that $x_{n}\rightarrow 0$ with respect to coordinatewise, there exists $n_{3}> n_{2}$ such that
\begin{eqnarray*}
\left\|\sum_{i=0}^{t_{2}} x_{n}(i)e^{(i)}\right\|_{\ell_{p}^{\lambda}(\widehat{F})}< \epsilon_{2},
\end{eqnarray*}
where $n \geq n_{3}.$ If we continue this process, we can find two increasing sequences $(t_{i})$ and $(n_{i})$ of natural numbers such that
\begin{eqnarray*}
\left\|\sum_{i=0}^{t_{j}} x_{n}(i)e^{(i)}\right\|_{\ell_{p}^{\lambda}(\widehat{F})}< \epsilon_{j}
\end{eqnarray*}
for each $n \geq n_{j+1}$ and
\begin{eqnarray*}
\left\|\sum_{i=t_{j}+1}^{\infty} u_{j}(i)e^{(i)}\right\|_{\ell_{p}^{\lambda}(\widehat{F})}< \epsilon_{j},
\end{eqnarray*}
where $u_{j}=x_{n_{j}}.$ Hence,
\begin{eqnarray*}
\left\|\sum_{j=0}^{n} u_{j}\right\|_{\ell_{p}^{\lambda}(\widehat{F})}&=&\left\|\sum_{j=0}^{n} \left[\sum_{i=0}^{t_{j-1}} u_{j}(i)e^{(i)} +
\sum_{i=t_{j-1}+1}^{t_{j}} u_{j}(i)e^{(i)} +
\sum_{i=t_{j}+1}^{\infty} u_{j}(i)e^{(i)} \right]\right\|_{\ell_{p}^{\lambda}(\widehat{F})} \\
&\leq&\left\|\sum_{j=0}^{n} \left[\sum_{i=t_{j-1}+1}^{t_{j}} u_{j}(i)e^{(i)}\right]\right\|_{\ell_{p}^{\lambda}(\widehat{F})}
 +2 \sum_{j=0}^{n}\epsilon_{j}.
\end{eqnarray*}

On the other hand, we have $\| x \|_{\ell_{p}^{\lambda}(\widehat{F})}<1.$ Thus,
$\| x \|^{p}_{\ell_{p}^{\lambda}(\widehat{F})}<1$ and we have,
\[\left\|\sum_{j=0}^{n} \left(\sum_{i=t_{j-1}+1}^{t_{j}} u_{j}(i)e^{(i)}\right)\right\|_{\ell_{p}^{\lambda}(\widehat{F})}
\leq \sum_{j=0}^{n}\sum_{i=t_{j-1}+1}^{t_{j}}
\left| E_{i}\left( u_{i}\right)\right|^{p}
\leq \sum_{j=0}^{n}\sum_{i=0}^{\infty}
\left| E_{i}\left( u_{i}\right)\right|^{p} \leq (n+1).
\]
Therefore
\[\left\|\sum_{j=0}^{n} \left(\sum_{i=t_{j-1}+1}^{t_{j}} u_{j}(i)e^{(i)}\right)\right\|_{\ell_{p}^{\lambda}(\widehat{F})}\leq
(n+1)^{1/p} .\]
By using the fact that $1 \leq (n+1)^{1/p}$ for all $n \in \mathbb{N}_0$ and $1< p< \infty,$ we have
\[\left\|\sum_{j=0}^{n} u_{j}\right\|_{\ell_{p}^{\lambda}(\widehat{F})}
 \leq (n+1)^{1/p}+1 \leq 2(n+1)^{1/p}.\]
 Therefore the space $\ell_{p}^{\lambda}(\widehat{F})$ has \textit{Banach-Saks} type $p.$
\end{proof}
\begin{rem}
Note that $R\left( \ell_{p}^{\lambda}(\widehat{F})\right)
=R\left( \ell_{p}\right) =2^{1/p},$ since $\ell_{p}^{\lambda}(\widehat{F})$  is linearly isomorphic to $\ell_{p}.$
\end{rem}
\begin{thm}
The space $\ell_{p}^{\lambda}(\widehat{F})$ has the weak fixed point property, where $1<p<\infty.$
\end{thm}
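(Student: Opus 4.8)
The plan is to reduce the statement to a single numerical inequality by combining the Garc\'ia-Falset coefficient with the isometric structure already established for $\ell_p^\lambda(\widehat{F})$; no computation beyond comparing $2^{1/p}$ with $2$ is required.

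First I would recall that the transformation $T$ built in the second theorem of Section~2 is a surjective linear isometry from $\ell_p^\lambda(\widehat{F})$ onto $\ell_p$, so that $\|x\|_{\ell_p^\lambda(\widehat{F})}=\|Tx\|_p$ for every $x$. Since the coefficient $R(\cdot)$ of Garc\'ia-Falset is defined solely through the norm, the unit ball $B(X)$, and weak null convergence, it is an invariant of isometric isomorphism. This is precisely what underlies the preceding Remark, which records $R\big(\ell_p^\lambda(\widehat{F})\big)=R(\ell_p)=2^{1/p}$. Thus I may take this value as known.

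Next I would simply note that for $1<p<\infty$ one has $1/p<1$, whence $2^{1/p}<2$, so that $R\big(\ell_p^\lambda(\widehat{F})\big)=2^{1/p}<2$. Finally I would invoke the criterion quoted in the Remark from \cite{G1}, namely that every Banach space $X$ with $R(X)<2$ has the weak fixed point property; applying it with $X=\ell_p^\lambda(\widehat{F})$ gives the conclusion at once.

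There is essentially no technical obstacle here, since the two ingredients are already in place. The only point that genuinely deserves care—and which is really absorbed into the cited Remark rather than reproved—is the invariance of $R$ under the isometry $T$: one must know that $T$ is a homeomorphism for the respective weak topologies, so that weakly null sequences in $\ell_p^\lambda(\widehat{F})$ correspond to weakly null sequences in $\ell_p$ and the supremum defining $R$ is preserved. This is automatic for any surjective isometric isomorphism, which is exactly why the equality $R\big(\ell_p^\lambda(\widehat{F})\big)=R(\ell_p)$ holds and why the value $2^{1/p}$ may be transported from the classical $\ell_p$ setting.
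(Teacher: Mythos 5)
Your proposal is correct and is exactly the argument the paper intends: the theorem is stated without an explicit proof precisely because it follows immediately from the preceding Remark $R\big(\ell_{p}^{\lambda}(\widehat{F})\big)=R(\ell_{p})=2^{1/p}<2$ together with the quoted criterion of Garc\'{\i}a-Falset from \cite{G1}. Your added observation that the transport of $R$ requires the map $T$ to be a surjective \emph{isometry} (not merely a linear isomorphism, as the Remark's wording suggests) is a worthwhile clarification, and it is justified by the norm-preservation established in the paper's isomorphism theorem.
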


\section{ Compact operators on the spaces $\ell_{p}^{\lambda}(\widehat{F}),$  $(1\leq p \leq \infty) $ }

In this section, we establish some estimates for the operator norms and the Hausdorff measures of non-compactness of certain matrix operators on the spaces
$\ell_{p}^{\lambda}(\widehat{F})$ and $\ell_{\infty}^{\lambda}(\widehat{F}).$ Further, by using the Hausdorff measure of non-compactness, we characterize some classes of compact operators on these spaces.

 For our investigations, we need the following results:
\begin{thm} \cite{cp3, cp6} \label{o1}
Let $X$ and $Y$ be FK spaces. Then $(X:Y) \subset B(X:Y),$
that is, every $A \in (X:Y)$ defines a linear operator $L_{A}\in B(X:Y),$ where $L_{A}x=Ax$ and $x\in X.$
\end{thm}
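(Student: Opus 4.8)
The plan is to realize $L_A$ as a linear map between the two Fréchet spaces $X$ and $Y$ and then to invoke the closed graph theorem, reducing the whole assertion to the continuity of the coordinate functionals. First I would record that, since $A\in(X:Y)$, the assignment $L_Ax=Ax$ is well defined: the series $A_n(x)=\sum_k a_{nk}x_k$ converges for every $x\in X$ and every $n\in\mathbb{N}_0$, and $Ax\in Y$. Linearity of $L_A$ is immediate because each $A_n$ is linear in $x$. Both $X$ and $Y$ are $FK$-spaces, hence complete linear metric spaces with continuous coordinates, so the closed graph theorem is applicable once the graph of $L_A$ is shown to be closed.

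The key preliminary step is to prove that every row functional $f_n\colon X\to\mathbb{R}$, given by $f_n(x)=A_n(x)=\sum_k a_{nk}x_k$, is continuous on $X$. For each fixed $N$ the partial functional $x\mapsto\sum_{k=0}^{N}a_{nk}x_k$ is a finite linear combination of the continuous coordinate maps $p_k$, hence continuous on $X$. Because $A\in(X:Y)$ guarantees that $f_n(x)=\lim_{N\to\infty}\sum_{k=0}^{N}a_{nk}x_k$ exists for every $x\in X$, the functional $f_n$ is the pointwise limit of a sequence of continuous linear functionals on the complete metric linear space $X$; by the Banach--Steinhaus theorem this forces $f_n$ to be continuous. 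This is the step I expect to be the main obstacle, since it is precisely where the completeness of $X$ is used in an essential way.

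With the continuity of the $f_n$ in hand, I would close the graph as follows. Suppose $x^{(m)}\to x$ in $X$ and $L_Ax^{(m)}\to z$ in $Y$. Continuity of the coordinates in the $FK$-space $Y$ gives $A_n\big(x^{(m)}\big)\to z_n$ for every $n\in\mathbb{N}_0$, while continuity of $f_n$ on $X$ gives $A_n\big(x^{(m)}\big)=f_n\big(x^{(m)}\big)\to f_n(x)=A_n(x)$. Comparing the two limits yields $z_n=A_n(x)$ for every $n\in\mathbb{N}_0$, that is, $z=Ax=L_Ax$; hence the graph of $L_A$ is closed. The closed graph theorem, available because $X$ and $Y$ are Fréchet spaces, then delivers $L_A\in B(X:Y)$, which is exactly the inclusion $(X:Y)\subset B(X:Y)$. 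The only genuine analytic content lies in the Banach--Steinhaus argument of the second paragraph; everything else is bookkeeping with the coordinatewise topology inherited by $FK$-spaces.
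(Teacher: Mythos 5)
Your proof is correct: the paper itself states this theorem without proof, citing it as a known result from \cite{cp3,cp6}, and your argument (continuity of each row functional via Banach--Steinhaus on the complete metric linear space $X$, then the closed graph theorem using continuity of coordinates in $Y$) is precisely the classical proof given in those references. Note only that both tools you invoke are valid here because the paper's FK-spaces are complete linear metric spaces, so the Baire-category versions of Banach--Steinhaus and the closed graph theorem apply even without local convexity.
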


\begin{thm} \cite{cp4}\label{o2}
Let $X \supset \phi$ and $Y$ be $BK$-spaces. Then $A \in(X:\ell_{\infty})$ if and only if
 \[\|A\|_{X}^{*}=\sup_{n\in\mathbb{N}_0}\|A_{n}\|_{X}^{*}< \infty.\]
Furthermore, if  $A \in(X:\ell_{\infty})$ then it follows that $\| L_{A}\|=\| A\|_{X}^{*}.$
\end{thm}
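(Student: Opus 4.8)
The plan is to prove the two implications separately and then read off the norm identity by combining the resulting inequalities. The central observation is that, since the coordinate maps on $\ell_\infty$ are continuous, the condition $A \in (X:\ell_{\infty})$ is equivalent to the statement that each row $A_n = (a_{nk})_{k\in\mathbb{N}_0}$ induces a continuous linear functional $f_n$ on $X$ via $f_n(x) = A_n(x) = \sum_k a_{nk}x_k$, and that these functionals are uniformly bounded; the common bound will turn out to be exactly $\sup_n \|A_n\|_X^* = \|A\|_X^*$.

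For the necessity, I would begin from $A \in (X:\ell_{\infty})$. Since $X$ and $\ell_\infty$ are $BK$-spaces (hence $FK$-spaces), Theorem \ref{o1} gives $L_A \in B(X:\ell_\infty)$ with $L_Ax = Ax$. For fixed $n$, the $n$-th coordinate functional on $\ell_\infty$ is continuous (as $\ell_\infty$ is a $BK$-space), so its composition with $L_A$ is a continuous linear functional $f_n$ on $X$ with $f_n(x)=A_n(x)$. By the definition of $\|\cdot\|_X^*$,
\[
\|A_n\|_X^* = \sup_{x \in S_X} |A_n(x)| = \sup_{x \in S_X}\bigl|(L_A x)_n\bigr| \le \sup_{x \in S_X}\|L_A x\|_\infty = \|L_A\|.
\]
Taking the supremum over $n$ yields $\|A\|_X^* = \sup_{n}\|A_n\|_X^* \le \|L_A\| < \infty$, which establishes both the finiteness claimed and one half of the norm equality.

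For the sufficiency, I would assume $\|A\|_X^* = \sup_n \|A_n\|_X^* < \infty$. For each fixed $n$ the finiteness of $\|A_n\|_X^*$ yields $A_n \in X^\beta$, so the series $A_n(x) = \sum_k a_{nk}x_k$ converges for every $x \in X$ and defines a bounded functional with $|A_n(x)| \le \|A_n\|_X^*\,\|x\|$. Consequently, for every $x \in X$,
\[
\|Ax\|_\infty = \sup_{n\in\mathbb{N}_0}|A_n(x)| \le \Bigl(\sup_{n\in\mathbb{N}_0}\|A_n\|_X^*\Bigr)\|x\| = \|A\|_X^*\,\|x\| < \infty,
\]
so $Ax \in \ell_\infty$, i.e. $A \in (X:\ell_\infty)$, and $\|L_A\| \le \|A\|_X^*$. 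Combining this with the reverse inequality from the necessity part gives $\|L_A\| = \|A\|_X^*$.

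The one step requiring genuine care, and thus the main obstacle, is the passage in the sufficiency direction from $\|A_n\|_X^* < \infty$ to $A_n \in X^\beta$ together with the pointwise estimate $|A_n(x)| \le \|A_n\|_X^*\|x\|$. This is not purely formal: it rests on the standard $BK$-space fact that, for $X \supset \phi$ complete, the functional norm of $x \mapsto \sum_k a_k x_k$ on $X$ coincides with $\|a\|_X^*$ precisely when $a \in X^\beta$, the convergence being secured by a uniform boundedness (Banach--Steinhaus) argument applied to the partial-sum functionals $x \mapsto \sum_{k=0}^{m} a_k x_k$ on the Banach space $X$. Once this identification is in place, both inclusions and the norm equality follow from the elementary supremum manipulations above.
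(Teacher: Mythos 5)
The paper itself contains no proof of this statement: Theorem \ref{o2} is quoted from \cite{cp4} as a known tool, so your argument can only be measured against the standard proof of that result rather than against anything in the text. Your overall architecture is the standard one, and most of it is sound: the necessity direction, using Theorem \ref{o1} to get $L_A\in B(X:\ell_\infty)$ and then $\|A_n\|_X^*=\sup_{x\in S_X}|(L_Ax)_n|\leq\|L_A\|$ for each $n$, is correct; so is the estimate $\|L_Ax\|_\infty\leq\|A\|_X^*\|x\|$ in the sufficiency direction once each $A_n\in X^\beta$ is in hand; and the norm equality does follow by combining the two inequalities.

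The genuine gap is exactly the step you flagged, and your proposed repair does not work. You claim that $\|A_n\|_X^*<\infty$ forces $A_n\in X^\beta$ by ``a uniform boundedness (Banach--Steinhaus) argument applied to the partial-sum functionals.'' Banach--Steinhaus runs in the opposite direction: given that the continuous functionals $f_m(x)=\sum_{k=0}^m a_kx_k$ converge pointwise on the Banach space $X$, it yields continuity of the limit functional (this is how one proves the implication $a\in X^\beta\Rightarrow\|a\|_X^*<\infty$); it can never manufacture pointwise convergence out of boundedness, since a pointwise bounded sequence of functionals need not converge anywhere. Worse, the implication you want is simply false if $\|a\|_X^*$ is read as a supremum over those unit vectors for which the series happens to converge: take $X=bs$ with its natural norm $\|x\|=\sup_n\left|\sum_{k=0}^n x_k\right|$ and $a=e$; then $\left|\sum_k x_k\right|\leq\|x\|$ whenever the series converges, so this supremum is at most $1$, yet $e\notin bs^\beta$ because the series diverges for $x=(1,-1,1,-1,\ldots)\in bs$, and a matrix all of whose rows equal $e$ would then violate the theorem. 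The statement is true only under the convention of \cite{cp3,cp4} that $\|a\|_X^*$ is deemed to exist precisely when $\sum_k a_kx_k$ converges for every $x\in X$; under that reading, the hypothesis $\sup_n\|A_n\|_X^*<\infty$ already presupposes $A_n\in X^\beta$ for every $n$, so the step needs no proof at all. Your ``main obstacle'' paragraph should therefore be replaced by this definitional remark; as written, it rests on a use of Banach--Steinhaus that is logically backwards.
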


\begin{thm} \cite{cp2} \label{o5}
Let $X$ be a $BK$-space. Then $A \in(X:\ell_{1})$ if and only if
\[\| A \|^{*}_{(X:\ell_{1})}
=\sup_{N \subset \mathbb{N}_0}\left\|\left( \sum_{n \in N} a_{nk}\right)_{k\in\mathbb{N}_0}\right\|_{X}^{*} < \infty,\] where $N$ is finite. Moreover, if $A \in \left(X:\ell_{1}\right) $ then
$\| A \|^{*}_{(X:\ell_{1})}\leq\| L_{A}\|\leq 4\| A \|^{*}_{(X:\ell_{1})}.$
\end{thm}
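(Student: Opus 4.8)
The plan is to prove the two-sided estimate $\|A\|^{*}_{(X:\ell_{1})}\leq\|L_{A}\|\leq 4\|A\|^{*}_{(X:\ell_{1})}$; the characterization $A\in(X:\ell_{1})$ and the norm comparison then both drop out. I would first record the set-up: since $X$ and $\ell_{1}$ are $BK$-spaces, Theorem \ref{o1} shows that any $A\in(X:\ell_{1})$ induces $L_{A}\in B(X:\ell_{1})$ with $\|L_{A}\|=\sup_{x\in S_{X}}\|Ax\|_{\ell_{1}}$. Writing $A_{n}=(a_{nk})_{k\in\mathbb{N}_0}$ for the $n$th row, each $A_{n}$ lies in $X^{\beta}$ and hence defines a continuous linear functional on $X$ of norm $\|A_{n}\|^{*}_{X}$; consequently $\sum_{n\in N}A_{n}$ is again a continuous functional and $\left\|\left(\sum_{n\in N}a_{nk}\right)_{k}\right\|^{*}_{X}$ is meaningful for every finite $N\subset\mathbb{N}_0$.

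For the lower bound, fix a finite $N$ and $x\in S_{X}$. Then $\sum_{k}\left(\sum_{n\in N}a_{nk}\right)x_{k}=\sum_{n\in N}A_{n}(x)$, so the triangle inequality gives $\left|\sum_{n\in N}A_{n}(x)\right|\leq\sum_{n}|A_{n}(x)|=\|Ax\|_{\ell_{1}}\leq\|L_{A}\|$. Taking the supremum over $x\in S_{X}$ and then over all finite $N$ yields $\|A\|^{*}_{(X:\ell_{1})}\leq\|L_{A}\|$.

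For the upper bound, which is the crux, I would fix $x\in S_{X}$ and $m\in\mathbb{N}_0$ and estimate the partial sum $\sum_{n=0}^{m}|A_{n}(x)|$. Splitting $\{0,\dots,m\}$ according to the signs of $\mathrm{Re}\,A_{n}(x)$ and $\mathrm{Im}\,A_{n}(x)$ writes this sum as at most four terms, each of the form $\pm\mathrm{Re}\sum_{n\in N}A_{n}(x)$ or $\pm\mathrm{Im}\sum_{n\in N}A_{n}(x)$ with $N$ a finite subset of $\{0,\dots,m\}$; since each such term is bounded by $\left|\sum_{k}\left(\sum_{n\in N}a_{nk}\right)x_{k}\right|\leq\|A\|^{*}_{(X:\ell_{1})}$, we obtain $\sum_{n=0}^{m}|A_{n}(x)|\leq 4\|A\|^{*}_{(X:\ell_{1})}$. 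Letting $m\to\infty$ and taking the supremum over $x\in S_{X}$ gives $\|L_{A}\|\leq 4\|A\|^{*}_{(X:\ell_{1})}$. (Over the real field only the positive/negative split is needed and the constant improves to $2$.)

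The equivalence then follows at once: the two inequalities show that $A\in(X:\ell_{1})$ forces $\|A\|^{*}_{(X:\ell_{1})}<\infty$, while conversely, if $\|A\|^{*}_{(X:\ell_{1})}<\infty$, the same four-block decomposition bounds $\sum_{n=0}^{m}|A_{n}(x)|$ by $4\|A\|^{*}_{(X:\ell_{1})}\|x\|$ uniformly in $m$, whence $Ax\in\ell_{1}$ for every $x\in X$ and $A\in(X:\ell_{1})$. The main obstacle is the upper estimate: one must verify that the sign-splitting genuinely reduces each block to a single functional $\sum_{n\in N}A_{n}$ over a finite $N$ so that the defining supremum applies, and then control the joint passage $m\to\infty$ and supremum over $S_{X}$ so that the bound survives in the limit.
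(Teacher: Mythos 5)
Your proof is correct, but note that the paper itself offers no proof of this statement: it is quoted as a known result from the literature (Malkowsky, \cite{cp2}; see also Malkowsky--Rako\v{c}evi\'{c}, \cite{cp3}), so there is no internal argument to compare against. Your argument is in fact the standard proof of this lemma. The lower bound via $\sum_{k}\left(\sum_{n\in N}a_{nk}\right)x_{k}=\sum_{n\in N}A_{n}(x)$ (legitimate since $N$ is finite and each row series converges) and the upper bound via the four-block sign decomposition of $\mathrm{Re}\,A_{n}(x)$ and $\mathrm{Im}\,A_{n}(x)$, each block being controlled by a single functional $\sum_{n\in N}A_{n}$ with $N$ finite, are exactly how the constant $4$ arises (and your remark that it improves to $2$ over the reals is also standard). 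The ``joint passage'' you worry about at the end is actually a non-issue: your bound $\sum_{n=0}^{m}|A_{n}(x)|\leq 4\|A\|^{*}_{(X:\ell_{1})}$ is uniform in both $m$ and $x\in S_{X}$, so the limit in $m$ and the supremum over $S_{X}$ pass trivially. The only point worth making explicit in the converse direction is that the hypothesis $\|A\|^{*}_{(X:\ell_{1})}<\infty$ must be read as including that each quantity $\left\|\left(\sum_{n\in N}a_{nk}\right)_{k}\right\|_{X}^{*}$ is defined, i.e.\ taking singletons $N=\{n\}$, that each row $A_{n}$ lies in $X^{\beta}$; this is what makes $A_{n}(x)$ meaningful before your four-block estimate can be applied, and it is implicit in the paper's definition of $\|\cdot\|_{X}^{*}$ (which also tacitly assumes $X\supset\phi$).
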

Throughout, let $T=(t_{nk})_{k,n\in\mathbb{N}_0}$ be a triangle, that is, $t_{nk}=0$ for $k>n$ and $t_{nn}\neq 0$ for all $n\in\mathbb{N}_0,$ $S$ its inverse and $R=S^{t},$ the transpose of $S.$ The following results are known:
\begin{thm} \cite{cp1,cp6} \label{o3}
Let $\left( X, \|\cdot\|\right) $ be a $BK$-space. Then $X_{T}$ is a $BK$-space with
$\|\cdot \|_{T}= \|T\cdot\|.$
\end{thm}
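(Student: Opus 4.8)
The plan is to exhibit $T$ itself as a linear isometry from $X_T$ onto $X$ and to transport the $BK$-space structure of $X$ back through it. First I would observe that, because $T$ is a triangle, it is injective on $\omega$ and possesses a triangle inverse $S$; consequently $T : X_T \to X$ is a linear bijection, its inverse being the restriction of the map $y \mapsto Sy$ (note $T(Sy)=y\in X$ forces $Sy\in X_T$). The functional $\|x\|_T := \|Tx\|$ is then a genuine norm on $X_T$: positive homogeneity and the triangle inequality follow from linearity of $T$ together with the corresponding properties of $\|\cdot\|$, while $\|x\|_T = 0$ forces $Tx = 0$, hence $x = 0$ by injectivity of $T$.

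Next I would establish completeness by transferring it along this isometry. Let $(x^{(m)})$ be Cauchy in $(X_T,\|\cdot\|_T)$. By the very definition of $\|\cdot\|_T$, the images $(Tx^{(m)})$ form a Cauchy sequence in $(X,\|\cdot\|)$, which is complete since $X$ is a $BK$-space; hence $Tx^{(m)} \to y$ for some $y \in X$. Setting $x := Sy$ gives $x \in X_T$ with $Tx = y$, and then $\|x^{(m)} - x\|_T = \|Tx^{(m)} - y\| \to 0$. Thus $X_T$ is a Banach space under $\|\cdot\|_T$.

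Finally I would verify that the coordinate functionals $p_n : X_T \to \mathbb{R}$ are continuous, which I expect to be the only step requiring the triangularity of $T$ in an essential way. Writing $S=(s_{nk})$ for the inverse triangle, each coordinate is recovered by the finite sum $x_n = \sum_{k=0}^n s_{nk}(Tx)_k$. Since $X$ is a $BK$-space, its own coordinate maps $y \mapsto y_k$ are continuous, so there are constants $C_k$ with $|(Tx)_k| \le C_k\|Tx\| = C_k\|x\|_T$; combining these yields $|x_n| \le \big(\sum_{k=0}^n |s_{nk}|\,C_k\big)\|x\|_T$, so $p_n$ is bounded, hence continuous. This completes the verification that $X_T$ is a $BK$-space with the stated norm. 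The essential ingredient throughout is that a triangle is invertible with a triangle inverse, which both makes $T$ a bijective isometry and localizes each output coordinate $x_n$ to finitely many coordinates of $Tx$.
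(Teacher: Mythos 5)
Your proof is correct, and it is worth noting that the paper itself offers no proof of this statement at all: it is quoted as a known result from Jarrah--Malkowsky and from Wilansky (it is invoked in the paper, e.g.\ in the proof of Theorem 2.1, as Theorem 4.3.12 of Wilansky's book). So there is no internal argument to compare against; what can be compared is your route versus the route in the cited sources. The textbook proof obtains the statement as a special case of a more general theorem: for an \emph{arbitrary} matrix $A$ and an $FK$-space $X$, the domain $X_{A}$ is an $FK$-space, but its topology in general needs extra seminorms (the coordinates and the suprema of partial row sums) beyond $q\circ A$; one then shows that when $A=T$ is a triangle these extra seminorms are redundant, so the single norm $\|T\cdot\|$ already gives the $BK$-topology. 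Your argument short-circuits all of that machinery by exploiting what is special about triangles from the start: $T$ has a triangle inverse $S$, hence $T:X_{T}\to X$ is a surjective linear isometry, which transports the norm, the completeness, and (via the finite sums $x_{n}=\sum_{k=0}^{n}s_{nk}(Tx)_{k}$ together with the continuity of the coordinates on $X$) the continuity of the coordinate functionals. Each step is sound: the identities $T(Sy)=y$ and $S(Tx)=x$ are legitimate because products of row-finite matrices act associatively on sequences, and your coordinate estimate $|x_{n}|\le\bigl(\sum_{k=0}^{n}|s_{nk}|C_{k}\bigr)\|x\|_{T}$ is exactly the point where triangularity is essential. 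What your approach buys is a self-contained, elementary proof of precisely the statement needed; what it gives up is the generality of the cited theorem, which covers matrix domains of non-triangular matrices where no such isometry exists.
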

\begin{rem} \cite{cp1}
The matrix domain $X_{T}$ of a normed sequence space $X$ has a basis if and only if $X$ has a basis.
\end{rem}

\begin{thm} \cite{cp1}\label{T3}
Let $X$ be a $BK$-space with AK and $R=S^{t}.$ If $a=(a_{k})\in \left( X_{T}\right)^{\beta},$ then
$\sum\limits_{k}a_{k}x_{k}=\sum\limits_{k}R_{k}(a)T_{k}(x)$ for all $x=(x_{k})\in X_{T}.$
\end{thm}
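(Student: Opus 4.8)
The plan is to realise both sides of the claimed identity as the values of one and the same continuous linear functional on the $BK$-space $X_T$, and then to let the AK property of $X$ reduce the infinite interchange of summation to a triviality on the finitely supported sequences. Write $y=Tx$, so that $x=Sy$ since $S=T^{-1}$, and recall that $R=S^{t}$ means $r_{jk}=s_{kj}$, whence $R_j(a)=\sum_k r_{jk}a_k=\sum_k s_{kj}a_k$ and $T_j(x)=y_j$. With this notation the assertion $\sum_k a_k x_k=\sum_k R_k(a)T_k(x)$ is precisely the interchange $\sum_k a_k\sum_j s_{kj}y_j=\sum_j\big(\sum_k s_{kj}a_k\big)y_j$, and the whole point is to justify that rearrangement.

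First I would verify that $f_a(x):=\sum_k a_k x_k$ defines a continuous linear functional on $X_T$. By Theorem \ref{o3} the space $X_T$ is a $BK$-space, hence complete with continuous coordinates, so each partial sum $f_a^{(m)}(x)=\sum_{k=0}^{m} a_k x_k$ is a continuous linear functional on $X_T$. Because $a\in(X_T)^{\beta}$, the limit $f_a(x)=\lim_m f_a^{(m)}(x)$ exists for every $x\in X_T$; the sequence $(f_a^{(m)})$ is therefore pointwise convergent, hence pointwise bounded, and the Banach--Steinhaus theorem yields that the limit functional $f_a$ is itself continuous on $X_T$. Next, since $T\colon X_T\to X$ is an isometric isomorphism with inverse $y\mapsto Sy$ (by the very definition $\|x\|_{X_T}=\|Tx\|_{X}$, and $T(Sy)=y$ for $y\in X$), the map $g\colon X\to\mathbb{R}$ given by $g(y)=f_a(Sy)=\sum_k a_k (Sy)_k$ is a continuous linear functional on $X$.

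Then I would evaluate $g$ on the canonical unit vectors. Because $X$ has AK we have $\phi\subset X$, and $Se^{(j)}\in X_T$ since $T(Se^{(j)})=e^{(j)}\in X$; hence $a\in(X_T)^{\beta}$ guarantees that $g(e^{(j)})=\sum_k a_k s_{kj}=\sum_k s_{kj}a_k=R_j(a)$ is a \emph{convergent} series, which in particular shows that $R(a)\in\omega$ is well defined. By linearity $g(y)=\sum_j R_j(a)y_j$ for every finitely supported $y\in\phi$. Finally, for arbitrary $y\in X$ the AK property gives $y=\lim_m\sum_{j=0}^{m} y_j e^{(j)}$ in the norm of $X$, so continuity of $g$ passes to the limit and yields $g(y)=\sum_j R_j(a)y_j$. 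Reading this back through $y=Tx$, i.e. $g(Tx)=f_a(x)$ and $y_j=T_j(x)$, produces exactly $\sum_k a_k x_k=\sum_k R_k(a)T_k(x)$ for all $x\in X_T$.

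The delicate points, and where I expect the real work to sit, are the two places where the hypothesis $a\in(X_T)^{\beta}$ is converted into analytic control: the Banach--Steinhaus step that upgrades the merely pointwise-defined $f_a$ to a genuinely continuous functional, and the verification that each scalar $R_j(a)$ is a convergent series so that $R(a)$ is meaningful. Everything else---the triangularity of $S$, the isometry $T\colon X_T\to X$, and the transfer from $\phi$ to $X$---is a formal consequence of the hypotheses, with the AK property doing the essential job of making the finite identity valid on $\phi$ propagate to the full series on all of $X_T$.
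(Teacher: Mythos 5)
The paper itself contains no proof of this statement: Theorem \ref{T3} is quoted verbatim from Jarrah and Malkowsky \cite{cp1} as one of the background results collected at the start of Section 6, so there is no internal argument to compare yours against. Your proof is correct and complete, and it is essentially the standard functional-analytic argument used in the cited literature. The two points you single out as delicate are indeed the only places where the hypothesis $a\in\left(X_{T}\right)^{\beta}$ does real work, and you discharge both correctly: since $X_{T}$ is a $BK$-space (Theorem \ref{o3}), hence a Banach space with continuous coordinates, the partial-sum functionals $f_a^{(m)}$ are continuous and pointwise convergent, so Banach--Steinhaus makes $f_a(x)=\sum_{k}a_{k}x_{k}$ continuous on $X_{T}$; and the columns $Se^{(j)}$ lie in $X_{T}$ because $T\left(Se^{(j)}\right)=e^{(j)}\in X$, so the same hypothesis yields convergence of $R_{j}(a)=\sum_{k}s_{kj}a_{k}$. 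The remaining steps --- that $T\colon X_{T}\to X$ is an isometric isomorphism with inverse $S$, that the transported functional $g=f_a\circ S$ agrees with $y\mapsto\sum_{j}R_{j}(a)y_{j}$ on $\phi$, and that AK propagates this identity from $\phi$ to all of $X$ by norm-density of the sections --- are exactly as you describe, and translating back through $y=Tx$ gives the claim. This is a sound reconstruction of a proof the paper merely cites.
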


\begin{rem} \cite{cp1}
The conclusion of Theorem \ref{T3} holds for $X=c$ and $X=\ell_{\infty}.$
\end{rem}

\begin{thm} \cite{cp3}
Let $X$ and $Y$ be Banach spaces, $S_{X}=\left\lbrace x \in X : \| x \|=1 \right\rbrace ,$
$K_{X}=\left\lbrace x \in X : \| x \| \leq 1 \right\rbrace$ and $A\in B(X:Y).$ Then, the Hausdorff measure of non-compactness $\|A\|_{\chi}$ of a compact operator $A$ is given by $\| A \|_{\chi}=\chi\left( AK \right)=\chi\left( AS \right).$
\end{thm}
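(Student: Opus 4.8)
The plan is to notice that the leftmost equality is nothing new: the relation $\|L\|_{\chi}=\chi(L(S_{X}))$ stated in the introduction already gives $\|A\|_{\chi}=\chi(AS)$, where I abbreviate $S=S_{X}$ and $K=K_{X}$. Thus the whole theorem reduces to proving the single identity $\chi(AS)=\chi(AK)$ for $A\in B(X:Y)$. One half is immediate: from $S\subset K$ we get $AS\subset AK$, and monotonicity of $\chi$ yields $\chi(AS)\le\chi(AK)$. Note also that $AS$ and $AK$ are bounded, since $\|Ax\|\le\|A\|$ for $x\in K$, so all the measures below are finite.

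For the reverse inequality $\chi(AK)\le\chi(AS)$ I would first record the elementary decomposition $K=\bigcup_{0\le t\le1}t\,S$: indeed $0=0\cdot s$ for any $s\in S$, and every nonzero $x\in K$ equals $\|x\|\cdot(x/\|x\|)$ with $x/\|x\|\in S$ and $0<\|x\|\le1$. Since $A$ is linear, $A(t\,S)=t\,(AS)$, whence $AK=\bigcup_{0\le t\le1}t\,(AS)$. The scaling property $\chi(\lambda Q)=|\lambda|\,\chi(Q)$ then controls each single slice, $\chi\big(t\,(AS)\big)=t\,\chi(AS)\le\chi(AS)$ for $t\in[0,1]$; the genuine difficulty is that this is a union over the whole continuum $t\in[0,1]$, so monotonicity alone does not suffice.

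To convert the continuum union into a finite one I would discretize $[0,1]$. Fix a partition $0=t_{0}<t_{1}<\cdots<t_{m}=1$ of mesh at most $\delta$. For $t\in[t_{j-1},t_{j}]$ and $s\in S$ we have $\|t\,As-t_{j}\,As\|=|t-t_{j}|\,\|As\|\le\delta\|A\|$, so the $j$-th slice $\bigcup_{t\in[t_{j-1},t_{j}]}t\,(AS)$ lies in the $\delta\|A\|$-neighbourhood $t_{j}(AS)+B(0,\delta\|A\|)$ of $t_{j}(AS)$. Using the subadditivity $\chi(Q_{1}+Q_{2})\le\chi(Q_{1})+\chi(Q_{2})$ with $\chi\big(B(0,\delta\|A\|)\big)\le\delta\|A\|$ (a single covering ball) together with the scaling property, each slice satisfies $\chi\le t_{j}\chi(AS)+\delta\|A\|\le\chi(AS)+\delta\|A\|$. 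Since $AK$ is the union of these finitely many slices, the property $\chi(Q_{1}\cup Q_{2})=\max\{\chi(Q_{1}),\chi(Q_{2})\}$, iterated over $j=1,\dots,m$, gives $\chi(AK)\le\chi(AS)+\delta\|A\|$. Letting $\delta\to0$ yields $\chi(AK)\le\chi(AS)$, and with the first half we conclude $\chi(AK)=\chi(AS)$, hence $\|A\|_{\chi}=\chi(AK)=\chi(AS)$.

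The main obstacle is exactly this discretization step: a finite $\epsilon$-cover of $AS$ must be transferred to a finite cover of the continuum union $AK=\bigcup_{t}t\,(AS)$ with only the controlled loss $\delta\|A\|$, and this forces one to combine the scaling, subadditivity and finite-union properties of $\chi$ rather than any single one. As an alternative one may invoke $K=\overline{\mathrm{conv}}\,S$, so that $AK=\overline{\mathrm{conv}}(AS)$ by linearity and continuity, and then appeal to the invariance $\chi(\overline{\mathrm{conv}}\,Q)=\chi(Q)$ of the Hausdorff measure of non-compactness; but since this invariance is not among the basic properties listed above, the elementary covering argument is preferable here.
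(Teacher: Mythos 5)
Your proof is correct. Note, however, that the paper contains no proof of this statement to compare against: it is quoted verbatim from \cite{cp3}. In that source the key equality $\chi(AK)=\chi(AS)$ is obtained by precisely the convexity route you mention and set aside at the end ($K_{X}=\mathrm{co}(S_{X})$, hence $AK=\mathrm{co}(AS)$ by linearity, plus the invariance of $\chi$ under convex hulls), whereas your discretization of the parameter interval $[0,1]$ uses only the properties of $\chi$ actually listed in Section 1 of this paper --- monotonicity, the maximum property for finite unions, scaling, subadditivity under set addition, and the trivial bound $\chi(B(0,r))\le r$. So your argument is the more elementary one and, relative to this paper, the more self-contained; the price is the $\delta\|A\|$ error bookkeeping, which you handle correctly.

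Two caveats. First, what you actually prove is the equality $\chi(AS)=\chi(AK)$; the identity $\|A\|_{\chi}=\chi(AS)$ is imported from the introduction. That is legitimate given the paper's stated preliminaries, but a fully self-contained proof would also derive $\|A\|_{\chi}=\chi(AK)$ from the definition of the $(\chi:\chi)$-measure of non-compactness: if $Q$ is bounded and $\chi(Q)<r$, cover $Q$ by finitely many balls $x_{i}+rK_{X}$; then $AQ\subset\bigcup_{i}\left(Ax_{i}+r\,AK\right)$, so $\chi(AQ)\le r\,\chi(AK)$ by translation invariance, scaling and the finite-union property, whence $\|A\|_{\chi}\le\chi(AK)$, while $\chi(AK)\le\|A\|_{\chi}\,\chi(K_{X})\le\|A\|_{\chi}$ gives the converse. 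Second, the decomposition $K=\bigcup_{0\le t\le1}tS$ tacitly assumes $S\neq\emptyset$, i.e. $X\neq\{0\}$; the degenerate case is trivial and worth a parenthetical remark at most.
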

Furthermore, $A$ is compact if and only if $\| A \|_{\chi}=0,$ (see \cite{cp3}). The Hausdorff measure of non-compactness satisfies the inequality
 $\| A \|_{\chi} \leq  \| A \|,$ (see \cite{cp3}).
\begin{thm} \cite{cp3} \label{o7}
Let $X$ be a Banach space with Schauder  basis $\left\lbrace
e_{1}, e_{2},\ldots \right\rbrace,$ $Q$ be a bounded subset of $X$
and $P_{n}: X\rightarrow X$ be the projector onto the linear span of $\left\lbrace
e_{1}, e_{2},\ldots,e_{n} \right\rbrace.$ Then,
\begin{eqnarray*}
\frac{1}{a} \limsup\limits_{n \rightarrow \infty}\left[\sup_{x \in Q}\| (I-P_{n})x\|\right]\leq \chi(Q)
\leq \limsup\limits_{n \rightarrow \infty}\left[\sup_{x \in Q}\|(I-P_{n})x\|\right],
\end{eqnarray*}
where $a=\limsup\limits_{n\rightarrow\infty}\|I-P_{n}\|.$
\end{thm}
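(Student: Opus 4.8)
The plan is to establish the two inequalities separately, writing $R_{n}:=I-P_{n}$ for the complementary projections. First I would record that the basis projections $P_{n}$ are uniformly bounded in operator norm (a consequence of the Banach--Steinhaus theorem applied to the Schauder basis $\{e_{1},e_{2},\ldots\}$), so that $a=\limsup_{n}\|R_{n}\|$ is a finite number; since each $R_{n}$ is a nonzero idempotent, $\|R_{n}\|\geq 1$ and hence $a\geq 1$, which guarantees that dividing by $a$ is legitimate.

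For the upper estimate $\chi(Q)\leq\limsup_{n}\sup_{x\in Q}\|R_{n}x\|$, I would fix $n$ and use the decomposition $x=P_{n}x+R_{n}x$, which yields $Q\subset P_{n}(Q)+R_{n}(Q)$. Since $P_{n}(Q)$ lies in the finite-dimensional span of $\{e_{1},\ldots,e_{n}\}$ and $Q$ is bounded, $P_{n}(Q)$ is totally bounded, so $\chi(P_{n}(Q))=0$. On the other hand $R_{n}(Q)$ is contained in the ball about the origin of radius $\sup_{x\in Q}\|R_{n}x\|$, whence $\chi(R_{n}(Q))\leq\sup_{x\in Q}\|R_{n}x\|$. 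Subadditivity of $\chi$ on sums then gives $\chi(Q)\leq\chi(P_{n}(Q))+\chi(R_{n}(Q))\leq\sup_{x\in Q}\|R_{n}x\|$ for every $n$, and passing to the $\limsup$ completes this half.

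For the lower estimate I would start from the definition of $\chi(Q)$: given any $\epsilon>\chi(Q)$ there are finitely many centers $y_{1},\ldots,y_{m}\in X$ with $Q\subset\bigcup_{i=1}^{m}B(y_{i},\epsilon)$. For $x\in Q$ I choose $i$ with $\|x-y_{i}\|<\epsilon$ and estimate $\|R_{n}x\|\leq\|R_{n}\|\,\|x-y_{i}\|+\|R_{n}y_{i}\|\leq\epsilon\|R_{n}\|+\max_{1\leq i\leq m}\|R_{n}y_{i}\|$. Because $\{e_{k}\}$ is a Schauder basis, $P_{n}y_{i}\to y_{i}$ and hence $\|R_{n}y_{i}\|\to 0$ for each of the finitely many $y_{i}$, so $\max_{i}\|R_{n}y_{i}\|\to 0$. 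Taking the supremum over $x\in Q$ and then the $\limsup$ over $n$ yields $\limsup_{n}\sup_{x\in Q}\|R_{n}x\|\leq a\epsilon$; letting $\epsilon\downarrow\chi(Q)$ gives $\limsup_{n}\sup_{x\in Q}\|R_{n}x\|\leq a\,\chi(Q)$, which is the asserted lower bound after dividing by $a$.

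The main obstacle is the lower inequality: one cannot control $\|R_{n}x\|$ uniformly over the bounded set $Q$ directly, but only through a finite $\epsilon$-net, and the argument hinges on the interplay between the uniform operator bound $\|R_{n}\|\leq a+o(1)$ and the vanishing of the tail norms $\|R_{n}y_{i}\|$ for the finitely many fixed centers. The upper inequality, by contrast, is a routine consequence of the finite-dimensionality of the ranges of the $P_{n}$ together with the standard properties of $\chi$.
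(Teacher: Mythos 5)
Your proof is correct, and it is the standard argument for this result: the upper bound via the decomposition $Q\subset P_{n}(Q)+(I-P_{n})(Q)$, total boundedness of the finite-dimensional part, and subadditivity of $\chi$; the lower bound via a finite $\epsilon$-net, the uniform bound $\|I-P_{n}\|$, and the pointwise convergence $P_{n}y_{i}\to y_{i}$ at the finitely many centers. The paper itself states this theorem without proof, quoting it from the reference of Malkowsky and Rako\v{c}evi\'{c}, and your argument is essentially the one given there, so there is nothing to flag beyond the (standard, and correctly invoked) fact that boundedness of each individual projection $P_{n}$ is itself part of the Schauder-basis theory that must precede the Banach--Steinhaus step.
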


\begin{thm} \cite{cp5} \label{o6}
Let $X$ be any of the spaces $\ell_{p}$ or $c_{0}$ and $Q$ be a bounded subset of a normed space $X.$ If  $P_{n}: X\rightarrow X$ is an operator defined by $P_{n}(x)=\left( x_{0},x_{1},\ldots,x_{n}, 0, 0,\ldots \right), $ then
\begin{eqnarray*}
\chi(Q)= \lim_{n\to\infty} \left(\sup_{x\in Q}\|(I-P_{n})x\|\right).
\end{eqnarray*}
\end{thm}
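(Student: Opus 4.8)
The plan is to obtain this identity as a sharpening of Theorem \ref{o7}, by first checking that the general two-sided estimate collapses to an equality for the spaces $\ell_p$ and $c_0$, and then upgrading the resulting $\limsup$ to a genuine limit. Both spaces carry the canonical Schauder basis $\{e^{(k)}\}_{k\in\mathbb{N}_0}$, and the operator $P_n$ defined here is precisely the projector onto the linear span of $\{e^{(0)},e^{(1)},\ldots,e^{(n)}\}$. Hence Theorem \ref{o7} applies and yields
\[
\frac{1}{a}\limsup_{n\to\infty}\Big[\sup_{x\in Q}\|(I-P_n)x\|\Big]\le \chi(Q)\le \limsup_{n\to\infty}\Big[\sup_{x\in Q}\|(I-P_n)x\|\Big],
\]
where $a=\limsup_{n\to\infty}\|I-P_n\|$.

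The first step is to evaluate $a$. For $x\in\ell_p$ one has $\|(I-P_n)x\|_p=\big(\sum_{k>n}|x_k|^p\big)^{1/p}\le\|x\|_p$, so that $\|I-P_n\|\le 1$; taking $x=e^{(n+1)}$ gives $\|(I-P_n)e^{(n+1)}\|=1$, whence $\|I-P_n\|=1$. The analogous computation with the supremum norm, using $\|(I-P_n)x\|_\infty=\sup_{k>n}|x_k|\le\|x\|_\infty$, shows $\|I-P_n\|=1$ on $c_0$ as well. Therefore $a=1$ in either case, the two bounds above coincide, and we obtain $\chi(Q)=\limsup_{n\to\infty}\sup_{x\in Q}\|(I-P_n)x\|$.

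It remains to replace the $\limsup$ by an honest limit, and this is where the tail structure of $\ell_p$ and $c_0$ is used decisively. For each fixed $x$ the quantity $\|(I-P_n)x\|$ is non-increasing in $n$: on $\ell_p$ it is the tail $\big(\sum_{k>n}|x_k|^p\big)^{1/p}$ and on $c_0$ it is $\sup_{k>n}|x_k|$, both shrinking as $n$ grows. Consequently the envelope $g(n):=\sup_{x\in Q}\|(I-P_n)x\|$ is itself non-increasing, since for every $x$ one has $\|(I-P_{n+1})x\|\le\|(I-P_n)x\|\le g(n)$, and taking the supremum over $x\in Q$ gives $g(n+1)\le g(n)$. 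A non-increasing sequence bounded below by $0$ converges, so $\limsup_{n\to\infty}g(n)=\lim_{n\to\infty}g(n)$. Substituting this into the equality of the previous step yields $\chi(Q)=\lim_{n\to\infty}\sup_{x\in Q}\|(I-P_n)x\|$, as claimed.

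I expect the only genuinely substantive point to be the monotonicity argument that turns $\limsup$ into $\lim$; the evaluation $a=1$ and the reduction to Theorem \ref{o7} are routine. No deeper analytic obstacle is anticipated, since the full compactness content is already packaged inside Theorem \ref{o7} and all that distinguishes $\ell_p$ and $c_0$ here is that their tail projections have norm one and act monotonically.
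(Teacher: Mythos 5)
Your proposal is correct. Note that the paper itself offers no proof of this statement: it is quoted verbatim from Rako\v{c}evi\'{c} \cite{cp5}, so there is no internal argument to compare against. Your derivation is in fact the standard one found in the cited literature: specialize the Goldenstein--Gohberg--Markus two-sided estimate (Theorem \ref{o7}) to the canonical basis projectors, observe that $\|I-P_n\|=1$ on both $\ell_p$ and $c_0$ (the tail projections are norm-one, witnessed by $e^{(n+1)}$), so the constant $a$ equals $1$ and the two bounds collapse to $\chi(Q)=\limsup_{n}\sup_{x\in Q}\|(I-P_n)x\|$, and finally use the pointwise monotonicity of the tails $\|(I-P_{n+1})x\|\le\|(I-P_n)x\|$ to conclude that the envelope $g(n)=\sup_{x\in Q}\|(I-P_n)x\|$ is non-increasing, hence convergent, turning the $\limsup$ into a limit. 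All three steps check out. The only point worth making explicit is that the argument requires $1\le p<\infty$ (and $c_0$), since $\ell_\infty$ has no Schauder basis and Theorem \ref{o7} would not apply; this is consistent with the paper's standing convention on $p$, but stating it would close the one small loophole in the phrase ``any of the spaces $\ell_p$.''
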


\begin{thm} \cite{cp7}
Let $X$ be a normed sequence space and $\chi_{T}$ and $\chi$ denote the Hausdorff measures of non-compactness on $M_{X_{T}}$ and $M_{X},$ the collection of all bounded sets in $X_{T}$ and $X,$ respectively. Then $\chi_{T}(Q)=\chi(T(Q))$ for all $Q \in M_{X_{T}}.$
\end{thm}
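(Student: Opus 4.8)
The plan is to exploit the fact that the triangle $T$ induces a surjective linear isometry from $X_{T}$ onto $X$, and then to show that the Hausdorff measure of non-compactness is invariant under such isometries. First I would recall from Theorem~\ref{o3} that $X_{T}$ is a $BK$-space under the norm $\|x\|_{T}=\|Tx\|$; since $T$ is a triangle, it is a bijection of $\omega$ onto itself with inverse $S$, so its restriction $T:X_{T}\to X$ is a linear bijection satisfying $\|Tx\|=\|x\|_{T}$ for every $x\in X_{T}$. Hence $T$ is a surjective linear isometry of $X_{T}$ onto $X$.

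The key observation is that such an isometry carries open balls onto open balls of the same radius. Explicitly, for $x_{0}\in X_{T}$ and $r>0$ one has
\[
T\bigl(B_{X_{T}}(x_{0},r)\bigr)=\{\,Tz:\|z-x_{0}\|_{T}<r\,\}=\{\,w\in X:\|w-Tx_{0}\|<r\,\}=B_{X}(Tx_{0},r),
\]
where the middle equality uses $\|z-x_{0}\|_{T}=\|Tz-Tx_{0}\|$ together with the surjectivity of $T$ (as $z$ ranges over $X_{T}$, the image $w=Tz$ ranges over all of $X$). From here the equality of the two measures follows by comparing the finite covering families used in the definition of $\chi_{T}(Q)$ and $\chi(T(Q))$. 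To get $\chi_{T}(Q)\le\chi(T(Q))$, I would take any finite cover $T(Q)\subset\bigcup_{i=1}^{n}B_{X}(w_{i},r_{i})$ with each $r_{i}<\epsilon$, write $w_{i}=Tx_{i}$ by surjectivity, and pull the cover back through $T$ to obtain $Q\subset\bigcup_{i=1}^{n}B_{X_{T}}(x_{i},r_{i})$. For the reverse inequality $\chi(T(Q))\le\chi_{T}(Q)$, I would start from a finite cover of $Q$ by balls in $X_{T}$ and push it forward by the displayed identity to a cover of $T(Q)$ by balls in $X$ with the same radii. Taking infima over admissible $\epsilon$ in both directions then yields $\chi_{T}(Q)=\chi(T(Q))$ for every bounded $Q\in M_{X_{T}}$.

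I do not expect a serious obstacle here, since the content is essentially the isometric invariance of the Hausdorff measure. The only point demanding genuine care is the verification that $T$ sends balls bijectively onto balls, which rests on the surjectivity of $T:X_{T}\to X$ guaranteed by $T$ being a triangle; without surjectivity one could only cover $T(Q)$ by balls centered in the image $T(X_{T})$, and the centers required by the definition of $\chi$ on $X$ might not all arise in this way. It is also worth noting that the argument is purely metric, so neither completeness nor the full $BK$-structure is used beyond ensuring, via Theorem~\ref{o3}, that the norm $\|\cdot\|_{T}$ is well defined.
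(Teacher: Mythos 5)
Your proof is correct. Note that the paper itself offers no proof of this statement---it is quoted as a known result from Djolovi\'{c} \cite{cp7}---but your argument via the isometric bijection $T:X_{T}\to X$ (surjectivity from the invertibility of the triangle $T$, so that $T$ carries balls onto balls of equal radius and finite covers transfer in both directions, giving both inequalities) is precisely the standard route to this result, and the one point that genuinely needs care, namely that every center $w_{i}\in X$ of a covering ball is of the form $Tx_{i}$ with $x_{i}\in X_{T}$, is handled correctly.
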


\begin{lem} \cite{cp3} \label{l9}
 Let $X$ denote any of the spaces $c_{0},$ $c$ or $\ell_{\infty}.$ Then, $X^{\beta}=\ell_{1}$ and $\| a \|_{X}^{*}
=\| a\|_{1}$ for all  $a \in \ell_{1}.$
\end{lem}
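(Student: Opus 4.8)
The statement bundles together two claims for each $X\in\{c_{0},c,\ell_{\infty}\}$: the identification of the $\beta$-dual, $X^{\beta}=\ell_{1}$, and the norm formula $\|a\|_{X}^{*}=\|a\|_{1}$. The plan is to treat all three spaces simultaneously by exploiting the chain of inclusions $c_{0}\subset c\subset\ell_{\infty}$, each of which carries the same norm $\|\cdot\|_{\infty}$ and each of which contains $\phi$, so that $\|\cdot\|_{X}^{*}$ is well defined. Since the $\beta$-dual reverses inclusions (if $X\subset Y$ and $ax\in cs$ for every $x\in Y$, then certainly $ax\in cs$ for every $x\in X$, whence $Y^{\beta}\subset X^{\beta}$), we obtain $\ell_{\infty}^{\beta}\subset c^{\beta}\subset c_{0}^{\beta}$. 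Thus it suffices to prove the two extreme inclusions $\ell_{1}\subset\ell_{\infty}^{\beta}$ and $c_{0}^{\beta}\subset\ell_{1}$; squeezing the chain then forces $X^{\beta}=\ell_{1}$ for all three $X$.

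First I would dispose of $\ell_{1}\subset\ell_{\infty}^{\beta}$: for $a\in\ell_{1}$ and any $x\in\ell_{\infty}$ one has $\sum_{k}|a_{k}x_{k}|\le\|x\|_{\infty}\sum_{k}|a_{k}|<\infty$, so $\sum_{k}a_{k}x_{k}$ converges absolutely, hence $ax\in cs$ and $a\in\ell_{\infty}^{\beta}$. The substantive step is the reverse inclusion $c_{0}^{\beta}\subset\ell_{1}$, and this is where I expect the only real obstacle. Here I would argue by contradiction: assuming $a\in c_{0}^{\beta}$ but $\sum_{k}|a_{k}|=\infty$, I consider on the Banach space $c_{0}$ the partial-sum functionals $f_{n}(x)=\sum_{k=0}^{n}a_{k}x_{k}$, which are bounded with $\|f_{n}\|=\sum_{k=0}^{n}|a_{k}|$ and which converge pointwise (hence are pointwise bounded) because each series $\sum_{k}a_{k}x_{k}$ converges. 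The Banach--Steinhaus theorem then yields $\sup_{n}\sum_{k=0}^{n}|a_{k}|<\infty$, i.e.\ $a\in\ell_{1}$, contradicting $\sum_{k}|a_{k}|=\infty$. Alternatively, a direct gliding-hump construction produces an explicit $x\in c_{0}$ for which $\sum_{k}a_{k}x_{k}$ fails to converge whenever $a\notin\ell_{1}$; I would present whichever route is shorter.

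It remains to establish the norm equality $\|a\|_{X}^{*}=\|a\|_{1}$. The upper bound is immediate from the triangle inequality: for $\|x\|_{\infty}=1$ one has $\bigl|\sum_{k}a_{k}x_{k}\bigr|\le\sum_{k}|a_{k}||x_{k}|\le\|a\|_{1}$, whence $\|a\|_{X}^{*}\le\|a\|_{1}$. For the reverse inequality I would exhibit near-extremal test sequences. Given $a\in\ell_{1}$ and $\varepsilon>0$, choose $N$ with $\sum_{k>N}|a_{k}|<\varepsilon$ and set $x_{k}=\operatorname{sgn}(a_{k})$ for $k\le N$ and $x_{k}=0$ otherwise; this $x$ is finitely supported, hence lies in $\phi\subset c_{0}\subset c\subset\ell_{\infty}$ with $\|x\|_{\infty}=1$, and $\sum_{k}a_{k}x_{k}=\sum_{k\le N}|a_{k}|\ge\|a\|_{1}-\varepsilon$. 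Letting $\varepsilon\to0$ gives $\|a\|_{X}^{*}\ge\|a\|_{1}$ for every one of the three spaces, which combined with the upper bound completes the proof. (For $\ell_{\infty}$ one may even take $x=(\operatorname{sgn}(a_{k}))$ directly, but the truncated sequence works uniformly across all three cases.)
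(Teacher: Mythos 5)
Your proof is correct, but there is nothing in the paper to compare it against: the paper states this lemma without proof, quoting it as a known result from Malkowsky and Rako\v{c}evi\'{c} \cite{cp3}. Judged on its own merits, your argument is the standard one and is complete. The absolute-convergence estimate gives $\ell_{1}\subset\ell_{\infty}^{\beta}$; the anti-monotonicity of the $\beta$-dual along $c_{0}\subset c\subset\ell_{\infty}$ correctly reduces everything to the single inclusion $c_{0}^{\beta}\subset\ell_{1}$; and your Banach--Steinhaus argument closes that inclusion, since the partial-sum functionals $f_{n}$ on $c_{0}$ have $\|f_{n}\|=\sum_{k=0}^{n}|a_{k}|$ (attained on finitely supported sign vectors, which do lie in $c_{0}$) and converge pointwise, hence are pointwise bounded, for every $x\in c_{0}$ precisely because $a\in c_{0}^{\beta}$. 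The norm identity $\|a\|_{X}^{*}=\|a\|_{1}$ via the triangle inequality and truncated sign sequences in $\phi$ is likewise standard and, as you note, works uniformly for all three spaces. Two cosmetic points only: in the lower bound you should fix a convention at indices where $a_{k}=0$ (e.g.\ put $x_{k}=1$ there) so that $\|x\|_{\infty}=1$ exactly, and the trivial case $a=0$ should be set aside separately; neither affects the substance. Your alternative gliding-hump route for $c_{0}^{\beta}\subset\ell_{1}$ would also work and has the advantage of being constructive, but the uniform-boundedness version you spelled out is shorter and fully rigorous as written.
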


If $A=(a_{nk})_{k,n\in\mathbb{N}}$ is an infinite matrix, and $N$ is any finite subset of $\mathbb{N}_0,$ we write
$b^{(N)}=\left(b^{(N)}_{k} \right)_{k\in\mathbb{N}_0}=(\sum\limits_{n\in N}a_{nk})_{k\in\mathbb{N}_0}.$ Also, we have $\widehat{a}_{nk}=R_{k}A_{n}$ for all $n,k\in\mathbb{N}_0.$

\begin{thm}\cite{cp}
Let $X$ be any of the spaces $\ell_{p}$ with $1\leq p\leq\infty$ or $c_{0}.$ Then, the following statements hold:
\begin{enumerate}
\item[(a)] Let $Y\in\{c_{0}, c, \ell_{\infty}\}.$ If $A \in( X_{T}:Y),$ then we put
\begin{eqnarray*}
\|A\|_{(X_{T}:\ell_{\infty})}=\sup_{n\in\mathbb{N}_0}\|E_{n}\|_{q}=\left\{\begin{array}{ccl}
\underset{n\in\mathbb{N}_0}{\sup}\sum\limits_{k}\left|\widehat{a}_{nk}\right|&,& (X=c_{0}, \ell_{\infty}),   \\
\underset{n\in\mathbb{N}_0}{\sup}\left(\sum\limits_{k}\left|\widehat{a}_{nk}\right|^{q}\right)^{1/q}&,&(X=\ell_{p} ~ for~ 1<p<\infty),  \\
\underset{k,n\in\mathbb{N}_0}{\sup}\left|\widehat{a}_{nk}\right|&,&( X=\ell_{1}).
\end{array}\right.
\end{eqnarray*}
Therefore, we have $\|L_{A}\|=\|A\|_{(X_{T}:\ell_{\infty})}.$
\item[(b)] Let $Y=\ell_{1}.$ If $A \in (X_{T}:\ell_{1}).$ Then we put
\begin{eqnarray*}
\|A\|_{(X_{T}:\ell_{1})}=\sup_{N}\|\widehat{b}^{(N)}\|_{q}=\left\{\begin{array}{ccl}
\underset{N\in\mathcal{F}}{\sup}\sum\limits_{k}\left|\sum_{n\in N}\widehat{a}_{nk}\right|&,&(X=c_{0}, \ell_{\infty}),   \\
\underset{N\in\mathcal{F}}{\sup}\left(\sum\limits_{k}\left|\sum\limits_{n\in N}\widehat{a}_{nk}\right|^{q}\right)^{1/q}&,&(X=\ell_{p} ~\textrm{ for }~ 1<p<\infty)
\end{array}\right.
\end{eqnarray*}
and
\[\|A\|_{((\ell_{1})_{T}:\ell_{1})}=\sup_{k\in\mathbb{N}_0}\|E^{k}\|_{1}
=\sup_{k\in\mathbb{N}_0}\sum_{n=0}^{\infty} \left|\widehat{a}_{nk} \right|.\]
If $X=\ell_{1},$ then $\|L_{A}\|=\|A\|_{((\ell_{1})_{T}:\ell_{1})}$ holds, otherwise we have $\|A\|_{(X_{T}:\ell_{1})}\leq
\|L_{A}\|\leq 4\|A\|_{(X_{T}:\ell_{1})}.$
\end{enumerate}
\label{com1}
\end{thm}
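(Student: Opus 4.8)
The plan is to transfer the problem to the classical sequence spaces by means of the isometry relating $X_T$ and $X$, and then to invoke the operator-norm results already recorded. First I would note that, since $T$ is a triangle and $X_T$ carries the norm $\|x\|_T=\|Tx\|$ by Theorem \ref{o3}, the map $T\colon X_T\to X$ is a bijective linear isometry. Writing $E=(\widehat{a}_{nk})$ with $\widehat{a}_{nk}=R_kA_n$, the central identity is
\[
\sum_k a_{nk}x_k=\sum_k \widehat{a}_{nk}\,T_k(x)=E_n(Tx)\qquad(x\in X_T),
\]
which is precisely Theorem \ref{T3} applied to the row $a=A_n$ (valid for the AK-spaces $c_0$ and $\ell_p$ with $1\le p<\infty$, and for $\ell_\infty$ by the remark following that theorem). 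Consequently $Ax=E(Tx)$, so $A\in(X_T:Y)$ if and only if $E\in(X:Y)$, and since $T$ is an isometry the operators satisfy $L_A=L_E\circ T$, whence $\|L_A\|=\|L_E\|$. Moreover, as $x\mapsto Tx$ maps the unit sphere of $X_T$ onto that of $X$, the same identity yields $\|A_n\|_{X_T}^{*}=\|E_n\|_{X}^{*}$ for every $n$; these two facts form the bridge used below.

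For part (a), the spaces $c_0$, $c$ and $\ell_\infty$ all carry the supremum norm, so it suffices to treat $Y=\ell_\infty$. Theorem \ref{o2}, applied with $X_T$ as its ambient space, gives $A\in(X_T:\ell_\infty)$ iff $\sup_n\|A_n\|_{X_T}^{*}<\infty$, together with $\|L_A\|=\sup_n\|A_n\|_{X_T}^{*}$. It then remains to evaluate $\|E_n\|_X^{*}$ in each case: the duality $\ell_p^{*}\cong\ell_q$ gives $\|E_n\|_{\ell_p}^{*}=\big(\sum_k|\widehat{a}_{nk}|^{q}\big)^{1/q}$ for $1<p<\infty$ and $\|E_n\|_{\ell_1}^{*}=\sup_k|\widehat{a}_{nk}|$, while Lemma \ref{l9} gives $\|E_n\|_X^{*}=\sum_k|\widehat{a}_{nk}|$ for $X\in\{c_0,\ell_\infty\}$. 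Taking the supremum over $n$ reproduces the three displayed formulas and the equality $\|L_A\|=\|A\|_{(X_T:\ell_\infty)}$.

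For part (b), with $Y=\ell_1$, I would separate the endpoint $X=\ell_1$ from the remaining cases, since only for $\ell_1$ does the argument deliver the operator norm exactly rather than up to a constant. For $X=\ell_1$ the transferred matrix $E$ lies in $(\ell_1:\ell_1)$ precisely when $\sup_k\sum_n|\widehat{a}_{nk}|<\infty$, and testing on the unit vectors $e^{(k)}$ gives the exact value $\|L_A\|=\sup_k\|E^{k}\|_1=\sup_k\sum_n|\widehat{a}_{nk}|$. For the other choices of $X$, Theorem \ref{o5} applied to $E$ furnishes $\|A\|_{(X_T:\ell_1)}=\sup_N\|\widehat{b}^{(N)}\|_{X}^{*}$ with $\widehat{b}^{(N)}_k=\sum_{n\in N}\widehat{a}_{nk}$, together with the two-sided estimate $\|A\|_{(X_T:\ell_1)}\le\|L_A\|\le4\|A\|_{(X_T:\ell_1)}$; evaluating $\|\widehat{b}^{(N)}\|_X^{*}$ exactly as in part (a) then produces the stated $\|\widehat{b}^{(N)}\|_q$-formulas.

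The main obstacle I anticipate is not the norm bookkeeping but securing the bridging identity uniformly across all the admissible $X$, in particular for $X=\ell_\infty$, which lacks AK. There one cannot invoke the AK-based Theorem \ref{T3} directly and must lean on its extension to $c$ and $\ell_\infty$, after first verifying that every row $A_n$ belongs to $(X_T)^{\beta}$ so that both series in the identity converge and agree. Once that identity is in hand, the rest is a routine combination of Theorems \ref{o2} and \ref{o5} with the standard dual-norm computations for $\ell_p$, $c_0$ and $\ell_\infty$.
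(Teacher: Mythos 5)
The paper does not actually prove this theorem --- it is quoted from Djolovi\'{c} and Malkowsky \cite{cp} --- but your reconstruction is correct and is the standard argument: the isometry $T\colon X_{T}\to X$ together with the identity $A_{n}(x)=\sum_{k}\widehat{a}_{nk}T_{k}(x)$ from Theorem \ref{T3} (extended to $\ell_{\infty}$ by the accompanying remark, and legitimate because each row $A_{n}$ lies in $(X_{T})^{\beta}$) reduces everything to the classical norm results, namely Theorem \ref{o2} for part (a), Theorem \ref{o5} for part (b), Lemma \ref{l9} and $\ell_{p}$--$\ell_{q}$ duality for the case-by-case evaluation of $\|\cdot\|_{X}^{*}$, and the column-sum computation for $(\ell_{1}:\ell_{1})$. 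This transfer-by-isometry route is precisely the one underlying the cited source and mirrored in the paper's own use of the result (proof of Theorem \ref{TC0}, where $R_{k}(A_{n})=e_{nk}$), so your proposal takes essentially the same approach.
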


By $\mathbb{N}_{r},$ we denote the subset of $\mathbb{N}_0$ with the elements that are greater than or equal to $r\in\mathbb{N}_0$ and $\sup_{\mathbb{N}_{r}}$ for the supremum taken over finite subset of $\mathbb{N}_{r}.$
\begin{thm}\cite{cp}
Let $A=(a_{nk})$ be an infinite matrix and $1 \leq p \leq \infty.$ Then, the following statements hold:
\begin{enumerate}
\item[(a)] If $A \in \left( (\ell_{p})_{T}:c_{0}\right)$ or $A \in \left( (c_{0})_{T}:c_{0}\right),$ then we have
\begin{eqnarray*}
\|L_{A}\|_{\chi}=\lim_{r\rightarrow\infty}\left(\sup_{n\in\mathbb{N}_{r}} \| E_{n}\|_{q}\right)= \left\{\begin{array}{ccl}
\underset{r\to\infty}{\lim}\left(\underset{n\in\mathbb{N}_{r}}{\sup}\sum\limits_{k}\left|\widehat{a}_{nk}\right|\right)&,&(X=c_{0},\ell_{\infty}),   \\
\underset{r\to\infty}{\lim}\left[\underset{n\in\mathbb{N}_{r}}{\sup}\left(\sum\limits_{k}\left|\widehat{a}_{nk}\right|^{q}\right)^{1/q}\right]&,&(X=\ell_{p} ~ for~ 1<p<\infty), \\
\underset{r\to\infty}{\lim}\left(\underset{n\in\mathbb{N}_{r},k\in\mathbb{N}}{\sup}\left|\widehat{a}_{nk}\right|\right)&,&(X=\ell_{1}).
\end{array}\right.
\end{eqnarray*}
\item[(b)] If $A \in \left( (\ell_{p})_{T}:\ell_{1}\right)~( 1<p \leq \infty)$ or $A \in((c_{0})_{T}:\ell_{1}),$ then we have
\[\lim_{r\rightarrow\infty}\left(\sup_{\mathbb{N}_{r}}\left\|\sum_{n\in\mathbb{N}_{r}}E_{n}\right\|_{q}\right)
 \leq \|L_{A}\|_{\chi}\leq4\lim_{r\rightarrow\infty}\left(\sup_{\mathbb{N}_{r}}\left\|\sum_{n\in\mathbb{N}_{r}}E_{n}\right\|_{q}\right),
\]
if $A\in\left((\ell_{1})_{T}:\ell_{1}\right),$ then we have
\[\|L_{A}\|_{\chi}=\lim_{r \rightarrow\infty}\left[\sup_{k\in\mathbb{N}_0}\left(\sum_{n=r}^{\infty}\left|\widehat{a}_{nk}\right|\right)\right].
\]
\item[(c)] If $A \in((\ell_{p})_{T}:c)$ or $A \in((c_{0})_{T}:c),$ then we have
\[
 \frac{1}{2}\lim_{r\rightarrow \infty}\left(\sup_{n\in\mathbb{N}_{r}}\|E_{n}- \widehat{\alpha}\|_{q}\right)
 \leq \| L_{A} \|_{\chi} \leq
\lim_{r\rightarrow\infty}\left(\sup_{n\in\mathbb{N}_{r}}\|E_{n}- \widehat{\alpha}\|_{q}\right),
\]
where $\widehat{\alpha}=\left(\widehat{\alpha}_{k}\right)_{k\in\mathbb{N}_0}$
with $\widehat{a}_{nk}\to\widehat{\alpha}_{k},$ as $n\to\infty,$ for every $k\in\mathbb{N}_0.$
\end{enumerate}
\label{com2}
\end{thm}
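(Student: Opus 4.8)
The plan is to reduce every case to a computation in the \emph{codomain} $Y$, using the identity $\|L_A\|_\chi=\chi\big(L_A(S_{X_T})\big)$ recalled above together with the transfer of the row functionals of $A$ from $X_T$ to the base space $X$. Since $A$ lies in the relevant matrix class, Theorem~\ref{o1} guarantees $L_A\in B(X_T:Y)$, so $L_A(S_{X_T})$ is a bounded subset of $Y$ and $\chi$ is meaningful. First I would invoke Theorem~\ref{T3} (and the remark following it, which extends its conclusion to $X=c$ and $X=\ell_\infty$) to write, for every $x\in X_T$,
\[
A_n(x)=\sum_k a_{nk}x_k=\sum_k R_k(A_n)\,(Tx)_k=\sum_k \widehat{a}_{nk}\,(Tx)_k=E_n(Tx),
\]
where $E_n=(\widehat{a}_{nk})_{k\in\mathbb{N}_0}$ and $\widehat{a}_{nk}=R_kA_n$. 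Because $x\mapsto Tx$ is a surjective linear isometry from $X_T$ onto $X$ (Theorem~\ref{o3}), it carries $S_{X_T}$ onto $S_X$; hence for any finite $N\subset\mathbb{N}_0$,
\[
\sup_{x\in S_{X_T}}\Big|\sum_{n\in N}A_n(x)\Big|
=\Big\|\sum_{n\in N}E_n\Big\|_{X^\ast}=\Big\|\sum_{n\in N}E_n\Big\|_q,
\]
with $q$ the exponent conjugate to that of $X$ (so $q=1$ for $X\in\{c_0,\ell_\infty\}$ and $q=\infty$ for $X=\ell_1$). This is exactly the quantity appearing in the statement, and the remaining work is to feed it into the appropriate measure-of-noncompactness formula.

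For part~(a) the codomain is $c_0$, so Theorem~\ref{o6} applies verbatim and gives
\[
\|L_A\|_\chi=\lim_{r\to\infty}\Big(\sup_{x\in S_{X_T}}\|(I-P_r)L_Ax\|_\infty\Big)
=\lim_{r\to\infty}\Big(\sup_{n\in\mathbb{N}_r}\sup_{x\in S_{X_T}}|A_n(x)|\Big)
=\lim_{r\to\infty}\Big(\sup_{n\in\mathbb{N}_r}\|E_n\|_q\Big),
\]
which is the asserted formula, the three displayed cases being merely the evaluation of $\|E_n\|_q$ for $X=c_0,\ell_\infty$, for $X=\ell_p$ with $1<p<\infty$, and for $X=\ell_1$. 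For part~(b) the codomain is $\ell_1$, which has AK, so Theorem~\ref{o6} again gives $\|L_A\|_\chi=\lim_{r\to\infty}\|(I-P_r)L_A\|_{X_T\to\ell_1}$. Here the tail operator norm is no longer a single row supremum; I would apply Theorem~\ref{o5} to the tail matrix $A^{[r]}$ (the rows of $A$ indexed by $\mathbb{N}_r$, the others set to zero), whose characteristic norm is $\sup_{N\subset\mathbb{N}_r}\|\sum_{n\in N}E_n\|_q$ by the finite-sum identity above. Theorem~\ref{o5} then supplies $\|A^{[r]}\|^\ast\le\|(I-P_r)L_A\|\le 4\|A^{[r]}\|^\ast$, and letting $r\to\infty$ yields the stated two-sided bound with constant $4$. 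In the exceptional case $X=\ell_1$ the bound collapses: Theorem~\ref{com1}(b) gives $\|L_A\|$ \emph{exactly} as $\sup_k\sum_n|\widehat{a}_{nk}|$, so the tail version produces the exact limit $\lim_{r\to\infty}\big[\sup_k\sum_{n\ge r}|\widehat{a}_{nk}|\big]$.

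For part~(c) the codomain is $c$, which possesses a Schauder basis but is \emph{not} covered by Theorem~\ref{o6}; here I would use the two-sided estimate of Theorem~\ref{o7}, whose constant is $a=\limsup_n\|I-P_n\|$. Taking the natural basis $\{e,e^{(0)},e^{(1)},\dots\}$ of $c$, the projection $P_n$ fixes the limit direction together with the first $n+1$ coordinate fluctuations, and a direct estimate shows $\|I-P_n\|=2$, so $a=2$, which accounts for the factor $\tfrac12$. The quantity $\sup_{x}\|(I-P_n)L_Ax\|$ now measures the deviation of the tail rows from the eventual limit of $Ax$; identifying that limit as $\lim_{m}A_m(x)=\sum_k\widehat{\alpha}_k(Tx)_k$ with $\widehat{\alpha}_k=\lim_m\widehat{a}_{mk}$ (legitimate because $E=(\widehat{a}_{mk})\in(X:c)$, as forced by $A\in(X_T:c)$), one obtains $\sup_{x\in S_{X_T}}\|(I-P_n)L_Ax\|=\sup_{m>n}\|E_m-\widehat{\alpha}\|_q$, and Theorem~\ref{o7} then delivers the displayed bounds. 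I expect part~(c) to be the main obstacle: unlike (a) and (b), it rests on the \emph{inexact} estimate of Theorem~\ref{o7}, so one must compute the basis constant of $c$ correctly and, more delicately, justify the pointwise limits $\widehat{\alpha}_k$ together with the interchange $\lim_m\sum_k\widehat{a}_{mk}(Tx)_k=\sum_k\widehat{\alpha}_k(Tx)_k$ uniformly over $S_{X_T}$, which is precisely where the membership $E\in(X:c)$ and the Lemma~\ref{l2}-type conditions must be brought in.
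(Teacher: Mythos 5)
The paper itself contains no proof of Theorem \ref{com2}: it is quoted verbatim from Djolovi\'{c} and Malkowsky \cite{cp} as a known result. Your reconstruction is correct and is essentially the argument of that reference, assembled from exactly the tools the paper quotes for this purpose: Theorems \ref{o3} and \ref{T3} (with its remark) to transfer the rows $A_n$ to the sequences $E_n=(\widehat{a}_{nk})_k$ over the base space, Theorem \ref{o6} applied to $Q=L_A(S_{X_T})$ for the codomains $c_0$ and $\ell_1$, Theorem \ref{o5} (equivalently Theorem \ref{com1}(b)) applied to the tail matrices for the factor-$4$ two-sided bound and its collapse to an equality when $X=\ell_1$, and Theorem \ref{o7} with the basis $\{e,e^{(0)},e^{(1)},\ldots\}$ of $c$ and the constant $\|I-P_n\|=2$ for part (c). The one genuinely delicate step---identifying $\lim_m A_m(x)$ with $\sum_k\widehat{\alpha}_k(Tx)_k$ so that $\sup_{x\in S_{X_T}}\|(I-P_n)L_Ax\|=\sup_{m>n}\|E_m-\widehat{\alpha}\|_q$---you flag and resolve correctly via $E\in(X:c)$ and the Lemma \ref{l2}-type conditions, so there is no gap.
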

\begin{thm}
Let $A=(a_{nk})$ be an infinite matrix and define the matrix $E=(e_{nk})_{k,n\in\mathbb{N}}$ by (\ref{eq400}). Then, the following statements hold:
\begin{enumerate}
\item[(a)] Let $Y\in\{c_{0}, c, \ell_{\infty}\}.$ If $A \in \left( \ell_{p}^{\lambda}(\widehat{F}):Y \right)$ with $1\leq p\leq\infty,$ then we have
\begin{eqnarray*}
\|A\|_{(\ell_{p}^{\lambda}(\widehat{F}):\ell_{\infty})}=\left\{\begin{array}{ccl}
\underset{n\in\mathbb{N}_0}{\sup}\sum\limits_{k}\left|\widehat{a}_{nk}\right|&,& (p=\infty),   \\
\underset{n\in\mathbb{N}_0}{\sup}\left(\sum\limits_{k}\left|\widehat{a}_{nk}\right|^{q}\right)^{1/q}&,&(1<p<\infty),  \\
\underset{k,n\in\mathbb{N}_0}{\sup}\left|\widehat{a}_{nk}\right|&,&(p=1).
\end{array}\right.
\end{eqnarray*}
Then, we have $\|L_{A}\|=\|A\|_{(\ell_{p}^{\lambda}(\widehat{F}):\ell_{\infty})}.$
\item[(b)] If $A\in\left(\ell_{p}^{\lambda}(\widehat{F}):\ell_{1}\right)$ with $1\leq p\leq\infty,$ then we have
\begin{eqnarray*}
\|A\|_{(\ell_{p}^{\lambda}(\widehat{F}):\ell_{1})}=\left\{\begin{array}{ccl}
\underset{N\in\mathcal{F}}{\sup}\sum\limits_{k}\left|\sum\limits_{n \in N}\widehat{a}_{nk}\right|&,&(p=\infty),   \\
\underset{N\in\mathcal{F}}{\sup}\left(\sum\limits_{k}\left|\sum\limits_{n\in N}\widehat{a}_{nk}\right|^{q}\right)^{1/q}&,&(1<p<\infty),  \\
\underset{k\in\mathbb{N}_0}{\sup}\sum\limits_{n=0}^{\infty}\left|\widehat{a}_{nk}\right|&,&(p=1).
\end{array}\right.
\end{eqnarray*}
Then, for $p=1,$ $\|L_{A}\|=\|A\|_{(\ell_{p}^{\lambda}(\widehat{F}):\ell_{1})}$ holds, otherwise $\|A\| _{(\ell_{p}^{\lambda}(\widehat{F}):\ell_{1})} \leq \| L_{A} \|\leq 4\| A \| _{(\ell_{p}^{\lambda}(\widehat{F}):\ell_{1})}.$
\end{enumerate}
\label{TC0}
\end{thm}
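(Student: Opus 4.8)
The plan is to obtain Theorem \ref{TC0} as the specialization of the general result Theorem \ref{com1} to the triangle $T=E$ defined by (\ref{f}) and the base space $X=\ell_p$ (or $c_0$). Since $\ell_p^\lambda(\widehat{F})=(\ell_p)_E$ by (\ref{c1}), we have $X_T=\ell_p^\lambda(\widehat{F})$, so once the abstract quantity $\widehat{a}_{nk}$ appearing in Theorem \ref{com1} is identified with the explicit entry $e_{nk}$ of (\ref{eq400}), both displayed formulas follow verbatim.

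First I would pin down this identification. Recall that $\widehat{a}_{nk}=R_k(A_n)$ with $R=S^{t}$ and $S=E^{-1}$. Since $\ell_p$ (for $1\le p<\infty$) and $c_0$ are $BK$-spaces with AK, Theorem \ref{T3} applies and yields $\sum_k a_{nk}x_k=\sum_k R_k(A_n)\,E_k(x)$ for every $x\in\ell_p^\lambda(\widehat{F})$; the case $X\in\{c,\ell_\infty\}$ is covered by the remark following Theorem \ref{T3}. Comparing this with the representation (\ref{eq25}) already established in Section 4, together with the definition (\ref{eq400}), shows at once that $R_k(A_n)=e_{nk}$, that is, $\widehat{a}_{nk}=e_{nk}$ for all $n,k\in\mathbb{N}_0$. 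This is the only genuinely space-specific computation; everything afterwards is an application of abstract theorems.

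For part (a) I would invoke Theorem \ref{o2}: since $A\in(\ell_p^\lambda(\widehat{F}):\ell_\infty)$ we have $\|L_A\|=\|A\|^*_{\ell_p^\lambda(\widehat{F})}=\sup_{n}\|A_n\|^*_{\ell_p^\lambda(\widehat{F})}$. Using the norm-preserving isomorphism $x\mapsto Ex$ onto $\ell_p$ (Theorem \ref{T11} gives $\|x\|_{\ell_p^\lambda(\widehat{F})}=\|Ex\|_p$, and $A_n(x)=E_n(y)$ with $y=Ex$ and $E_n=(e_{nk})_k$), each dual norm $\|A_n\|^*$ is computed in $\ell_p^{*}$. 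This produces the $\ell_q$-norm $\big(\sum_k|e_{nk}|^q\big)^{1/q}$ for $1<p<\infty$, the $\ell_1$-norm $\sum_k|e_{nk}|$ for $p=\infty$ (via Lemma \ref{l9}, where the $\ell_\infty$-dual norm equals $\|\cdot\|_1$), and $\sup_{k}|e_{nk}|$ for $p=1$ (since $\ell_1^{*}=\ell_\infty$); taking the supremum over $n$ and substituting $\widehat{a}_{nk}=e_{nk}$ yields the three-case formula together with the equality $\|L_A\|=\|A\|_{(\ell_p^\lambda(\widehat{F}):\ell_\infty)}$.

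For part (b) I would instead use Theorem \ref{o5}, which gives $\|A\|^*_{(X:\ell_1)}\le\|L_A\|\le 4\|A\|^*_{(X:\ell_1)}$ and the sharpened equality $\|L_A\|=\|A\|^*_{(X:\ell_1)}$ when $X=\ell_1$, i.e. $p=1$. Here $\|A\|^*_{(X:\ell_1)}=\sup_{N}\big\|\big(\sum_{n\in N}a_{nk}\big)_k\big\|^*$ over finite $N\subset\mathbb{N}_0$, and the same isometry converts each inner dual norm into the $\ell_q$-norm of $\big(\sum_{n\in N}\widehat{a}_{nk}\big)_k$, giving the three displayed expressions; the $p=1$ versus $1<p\le\infty$ dichotomy in the constant is already encoded in Theorem \ref{o5}. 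Thus the principal obstacle is the clean identification $\widehat{a}_{nk}=e_{nk}$ in the first step, after which the norm computations are routine consequences of the isometry $\ell_p^\lambda(\widehat{F})\cong\ell_p$ and the cited functional-analytic results.
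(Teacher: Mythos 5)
Your proposal is correct and follows essentially the same route as the paper: the paper's proof likewise uses Theorem \ref{T3} (and the remark following it) with $T=E$, $S=E^{-1}$ to obtain the identification $\widehat{a}_{nk}=R_{k}(A_{n})=e_{nk}$, and then concludes both parts by citing Theorem \ref{com1}. Your additional unpacking of part (a) via Theorem \ref{o2} and part (b) via Theorem \ref{o5} merely inlines the ingredients on which Theorem \ref{com1} itself rests, so there is no substantive difference.
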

\begin{proof}
Suppose that $A\in \left( \ell_{p}^{\lambda}(\widehat{F}):Y\right).$ Then, we have $A_{n}\in\big[\ell_{p}^{\lambda}(\widehat{F})\big]^{\beta}$ for all $n\in\mathbb{N}_0$ and it follows from Theorem \ref{T3} that
\begin{eqnarray*}\label{c39}
A_{n}(x)=\sum_{k}a_{nk}x_{k}
=\sum_{k}R_{k}(A_{n})T_{k}(x)
\end{eqnarray*}
for all $x \in c^{\lambda}(\widehat{F})$ and $n\in\mathbb{N}_0,$ where $R_{k}(A_{n})=\sum\limits_{j}r_{kj}a_{nj}=\sum\limits_{j}s_{jk}a_{nj}.$ Here $T=E$ and $S=E^{-1}.$ Therefore we have $R_{k}(A_{n})=e_{nk}$ for all $k,n\in\mathbb{N}_0.$

Proof of Part (a) can be obtained by applying Theorem \ref{com1}. Since the proof of Part (b) is similar to the proof of Part (a), we omit the details.
\end{proof}

\begin{thm}
 Let $A=(a_{nk})$ be an infinite matrix and $E=(e_{nk})_{k,n\in\mathbb{N}_0}$ be defined by (\ref{eq400}). Then, for $1\leq p\leq\infty,$ we have
 \begin{enumerate}
\item[(a)] If $A \in \left( \ell_{p}^{\lambda}(\widehat{F}):c_{0} \right),$ then we have
\begin{eqnarray*}
\|L_{A}\|_{\chi}=\left\{\begin{array}{ccl}
\lim\limits_{r \rightarrow\infty}\left(\sup\limits_{n\in\mathbb{N}_{r}}\sum\limits_{k}\left|\widehat{a}_{nk}\right|\right)&,&(p=\infty),   \\
\lim\limits_{r\rightarrow\infty}\left(\sup\limits_{n\in\mathbb{N}_{r}}\left(\sum\limits_{k}\left|\widehat{a}_{nk}\right|^{q}\right)^{1/q}\right)&,&(1<p<\infty),  \\
\lim\limits_{r\rightarrow\infty}\left(\sup\limits_{n\geq r,k \geq 0}\left|\widehat{a}_{nk}\right|\right)&,&(p=1).
\end{array}\right.
\end{eqnarray*}
\item[(b)] If $A\in\left(\ell_{p}^{\lambda}(\widehat{F}):\ell_{1}\right)$ with $1<p\leq\infty$, then we have
\[\lim\limits_{r \rightarrow \infty}\left(
\sup_{\mathbb{N}_{r}}\left\|\sum_{n \in \mathbb{N}_{r}}E_{n}\right\|_{q}\right)\leq\|L_{A}\|_{\chi}\leq4\lim_{r\rightarrow\infty}\left(
\sup_{\mathbb{N}_{r}}\left\|\sum_{n \in \mathbb{N}_{r}}E_{n}\right\|_{q}\right).
\]
If $A \in \left(\ell_{1}^{\lambda}(\widehat{F}):\ell_{1}\right),$ then we have
\[\|L_{A}\|_{\chi}=\lim_{r\rightarrow\infty}\left[\sup_{k\in\mathbb{N}_0}\left(\sum_{n=r}^{\infty}\left|\widehat{a}_{nk}\right|\right)\right].
         \]
\item[(c)] If $A \in \left(\ell_{p}^{\lambda}(\widehat{F}):c \right),$ then we have
       \[\frac{1}{2}\lim\limits_{r \rightarrow \infty}\left(
          \sup_{n\in\mathbb{N}_{r}} \|E_{n}-\widehat{\alpha} \|_{q}\right)
           \leq \| L_{A} \| _{\chi} \leq
           \lim\limits_{r \rightarrow \infty}\left(
                    \sup_{n\in\mathbb{N}_{r}} \|E_{n}-\widehat{\alpha} \|_{q}\right),
          \]
 where $\widehat{\alpha}=\left( \widehat{\alpha}_{k}\right)_{k\in\mathbb{N}} $ with $\widehat{\alpha}_{k}=\lim\limits_{n \rightarrow \infty} \widehat{a}_{nk}$ for all $k\in\mathbb{N}_0.$
 \end{enumerate}
\label{TC1}
\end{thm}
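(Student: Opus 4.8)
The plan is to recognize that $\ell_p^\lambda(\widehat{F})$ is exactly the matrix domain $(\ell_p)_E$ of the triangle $E$ given by (\ref{f}), and then to read off each assertion from the general result of Theorem \ref{com2} with base space $X=\ell_p$ (or $X=\ell_\infty$ when $p=\infty$) and triangle $T=E$. Since the statements here are phrased in the same notation as Theorem \ref{com2}, the substance of the argument is the identification of the relevant quantities rather than any fresh estimate.

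First I would invoke (\ref{c1}) to write $\ell_p^\lambda(\widehat{F})=(\ell_p)_E$ and $\ell_\infty^\lambda(\widehat{F})=(\ell_\infty)_E$, so that these spaces are of the form $X_T$ demanded by Theorem \ref{com2} with $T=E$. Next, exactly as in the proof of Theorem \ref{TC0}, I would use Theorem \ref{T3} together with the inverse $S=E^{-1}$ and its transpose $R=S^t$ to obtain the pointwise representation $A_n(x)=\sum_k R_k(A_n)T_k(x)=\sum_k e_{nk}y_k$, valid whenever $A_n\in\big[\ell_p^\lambda(\widehat{F})\big]^\beta$ for all $n\in\mathbb{N}_0$; this membership is guaranteed by Theorem \ref{T1} precisely because $A\in(\ell_p^\lambda(\widehat{F}):Y)$. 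This yields the key identification $\widehat{a}_{nk}=R_k(A_n)=e_{nk}$, so the entries $\widehat{a}_{nk}$ of Theorem \ref{com2} coincide with the entries $e_{nk}$ of the matrix $E$ from (\ref{eq400}).

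With these identifications in hand, each part follows by direct transcription. For Part (a), the hypothesis $A\in(\ell_p^\lambda(\widehat{F}):c_0)=((\ell_p)_E:c_0)$ lets me apply Part (a) of Theorem \ref{com2}, which gives the three case formulas for $\|L_A\|_\chi$ according as $p=\infty$, $1<p<\infty$, or $p=1$. For Part (b), the hypothesis $A\in(\ell_p^\lambda(\widehat{F}):\ell_1)$ with $1<p\leq\infty$ produces the two-sided estimate from Part (b) of Theorem \ref{com2}, while the case $A\in(\ell_1^\lambda(\widehat{F}):\ell_1)$ yields the exact value of $\|L_A\|_\chi$. For Part (c), the hypothesis $A\in(\ell_p^\lambda(\widehat{F}):c)$ gives the two-sided estimate from Part (c) of Theorem \ref{com2}, with $\widehat{\alpha}_k=\lim_{n\to\infty}\widehat{a}_{nk}=\lim_{n\to\infty}e_{nk}$.

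The only genuine obstacle I anticipate is verifying that $L_A$ factors correctly through the norm-preserving isomorphism $T\colon\ell_p^\lambda(\widehat{F})\to\ell_p,\ x\mapsto Ex$, so that the Hausdorff measure computed for the induced matrix $E=(e_{nk})$ acting on $\ell_p$ equals $\|L_A\|_\chi$ on $\ell_p^\lambda(\widehat{F})$. This rests on the isometry established in the isomorphism theorem and on the compatibility of the Hausdorff measures under the triangle $E$ (the relation $\chi_T(Q)=\chi(T(Q))$ recorded earlier). Once this compatibility is in place, no separate computation is needed and all three parts follow at once from Theorem \ref{com2}.
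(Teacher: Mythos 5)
Your proposal is correct and follows essentially the same route as the paper: the paper's proof of Theorem \ref{TC1} simply repeats the argument of Theorem \ref{TC0} (identifying $\ell_{p}^{\lambda}(\widehat{F})=(\ell_{p})_{E}$, using Theorem \ref{T3} with $T=E$, $S=E^{-1}$, $R=S^{t}$ to get $\widehat{a}_{nk}=R_{k}(A_{n})=e_{nk}$) and then applies Theorem \ref{com2} in place of Theorem \ref{com1}, which is exactly the transcription you describe.
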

\begin{proof}
Proof of Theorem \ref{TC1} can be given in the same way as that of Theorem \ref{TC0} by applying Theorem \ref{com2} instead of
Theorem \ref{com1}.
\end{proof}
 \begin{cor}
 Let $1 \leq p \leq \infty.$ Then the following statements hold:
 \begin{enumerate}
 \item[(a)] If $A \in \left( \ell_{p}^{\lambda}(\widehat{F}):c_{0}\right),$ then $L_{A}$ is compact if and only if
\begin{enumerate}
\item[(i)] for $p=\infty,$
 \[ \lim\limits_{r \rightarrow \infty}\left( \sup_{n\in\mathbb{N}_{r}} \sum_{k}\left| \widehat{a}_{nk} \right| \right)=0,\]
\item[ (ii)] for $1 <p < \infty,$
 \[\lim\limits_{r \rightarrow \infty}\left[\sup_{n\in\mathbb{N}_{r}}\left(\sum_{k}\left| \widehat{a}_{nk}\right|^{q} \right)^{1/q} \right]=0, \]
 \item[(iii)] for $p=1$
 \[
    \lim\limits_{r \rightarrow \infty}\left(\sup\limits_{n\in\mathbb{N}_{r},k\in\mathbb{N}}\left| \widehat{a}_{nk} \right| \right)=0.
 \]
\end{enumerate}
 \item[(b)] If $A \in \left( \ell_{p}^{\lambda}(\widehat{F}):\ell_{1}\right),$ then $L_{A}$ is compact if and only if
 \begin{enumerate}
 \item[(i)] for $1< p\leq \infty,$
 \[\lim_{r \rightarrow \infty}\left(\sup_{\mathbb{N}_{r}}\left\|\sum_{n\in\mathbb{N}_{r}}E_{n}\right\|_{q}\right)=0,
  \]
\item[(ii)] for $p=1,$
  \[\lim_{r\rightarrow\infty}\left(\sup_{k\in\mathbb{N}_0}\sum_{n=r}^{\infty}\left| \widehat{a}_{nk}\right|\right)=0.
  \]
 \end{enumerate}
\item [ (c) ] If $A \in \left( \ell_{p}^{\lambda}(\widehat{F}):c\right),$ then $L_{A}$ is compact if and only if
 \[ \lim\limits_{r \rightarrow \infty}\left(
                    \sup_{n\in\mathbb{N}_{r}} \|E_{n}-\widehat{\alpha} \|_{q}\right)=0,
 \]
  where  $\widehat{\alpha}=\left( \widehat{\alpha}_{k}\right)_{k\in\mathbb{N}_0} $ with $\widehat{\alpha}_{k}=\lim\limits_{n \rightarrow \infty} \widehat{a}_{nk}$ for all $k$ and the matrix $E=\left(e_{nk}\right)_{k,n\in\mathbb{N}}$ is defined by (\ref{eq400}).
 \end{enumerate}
 \end{cor}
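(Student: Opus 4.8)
The plan is to obtain every equivalence in the corollary directly from Theorem \ref{TC1}, combined with the fundamental fact recorded in the preliminaries that an operator $L_A\in B(X:Y)$ is compact if and only if $\|L_A\|_\chi=0$. Thus the whole corollary amounts to setting the expressions for $\|L_A\|_\chi$ supplied by Theorem \ref{TC1} equal to zero and reading off the resulting limit conditions. No new estimate has to be produced; the work is entirely that of specialising a formula already in hand.

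First I would treat Part (a). Taking $A\in\left(\ell_p^\lambda(\widehat{F}):c_0\right)$, Part (a) of Theorem \ref{TC1} gives the exact value of $\|L_A\|_\chi$ as $\lim_{r\to\infty}\bigl(\sup_{n\in\mathbb{N}_r}\|E_n\|_q\bigr)$, written out separately in the three regimes $p=\infty$, $1<p<\infty$ and $p=1$. Since $L_A$ is compact exactly when this quantity vanishes, the three displayed conditions (i)--(iii) follow immediately. The same mechanism handles the $\ell_1\to\ell_1$ subcase of Part (b): there Theorem \ref{TC1}(b) furnishes the exact value $\|L_A\|_\chi=\lim_{r\to\infty}\bigl[\sup_{k\in\mathbb{N}_0}\sum_{n=r}^\infty|\widehat{a}_{nk}|\bigr]$, so equating it to zero yields condition (ii) verbatim.

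The remaining cases, namely the $1<p\le\infty$ subcase of Part (b) and all of Part (c), are governed not by an equality but by a two-sided estimate. For Part (c), Theorem \ref{TC1}(c) gives $\tfrac12 L\le\|L_A\|_\chi\le L$ with $L=\lim_{r\to\infty}\bigl(\sup_{n\in\mathbb{N}_r}\|E_n-\widehat{\alpha}\|_q\bigr)$, and for Part (b) with $1<p\le\infty$ it gives $L\le\|L_A\|_\chi\le 4L$ with $L=\lim_{r\to\infty}\bigl(\sup_{\mathbb{N}_r}\|\sum_{n\in\mathbb{N}_r}E_n\|_q\bigr)$. In each of these one cannot simply substitute a single formula; instead I would observe that a sandwich $cL\le\|L_A\|_\chi\le CL$ with fixed positive constants $c,C$ forces the equivalence $\|L_A\|_\chi=0\iff L=0$, since the two bounds vanish together. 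This gives the stated compactness criteria in these regimes.

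The only point I would flag as requiring genuine (though minor) care is precisely this last observation: in the estimate-only cases the passage from $\|L_A\|_\chi=0$ to the vanishing of the limit $L$ is not a literal substitution, and must be justified by the simultaneous vanishing of the lower and upper bounds. Everything else is a direct transcription of Theorem \ref{TC1} through the criterion $\|L_A\|_\chi=0$, so no additional computation is needed.
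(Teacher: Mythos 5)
Your proposal is correct and is exactly the route the paper intends: the corollary is stated without proof as an immediate consequence of Theorem \ref{TC1} together with the fact, recorded in the preliminaries, that $L_{A}$ is compact if and only if $\|L_{A}\|_{\chi}=0$. Your additional remark that in the sandwich cases ($cL\leq\|L_{A}\|_{\chi}\leq CL$ with $c,C>0$) the equivalence $\|L_{A}\|_{\chi}=0\Longleftrightarrow L=0$ still holds is precisely the (minor) justification needed, so the proof is complete.
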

\section{Conclusion}
We should state that although the domains of the matrices $\Lambda$ and $\widehat{F}$ in the classical sequence spaces $\ell_{p}$ and $\ell_{\infty}$ are investigated by Mursaleen and Noman \cite{M1}, and Kara \cite{Kara}, since we employ the composition of the triangles $\Lambda$ and $\widehat{F}$ the main results of the present paper are much more general than the corresponding results obtained by Mursaleen and Noman \cite{M1}, and Kara \cite{Kara}. It is worth mentioning here that in spite of the domain of the matrix $E$ in the space $\ell_{p}$ of absolutely $p$-summable sequences has been
studied in the present paper for the case $1\leq p\leq\infty,$ one can derive the similar results concerning the domain of the matrix $E$ in the space $\ell_{p}$ for $0<p<1$ which are new and are also complementary of our contribution.


\begin{thebibliography}{20}
\bibitem{abdullah} A. Alotaibi, M. Mursaleen, B.AS. Alamri, S.A. Mohiuddine, \textit{Compact operators on some Fibonacci difference sequence spaces}, J. Inequal. Appl. 2015(2015) 9 pages.
\bibitem{A} B. Altay, F. Ba\c{s}ar, \textit{Certain topological properties and duals of the domain of a triangle matrix in a sequence space}, J. Math. Anal. Appl. \textbf{336} (2007), 632-645.
\bibitem{Bana} J. Bana\'{s}, K. Goebel, \textit{Measure of non-compactness in Banach spaces}, Lecture Notes in Pure and Applied Mathematics, Vol. 60, Marcel Dekker, New York \textbf{\textperiodcentered} Basel, 1980.
\bibitem{Altay} F. Ba\c{s}ar, B. Altay, M. Mursaleen, \textit{Some generalizations of the space $bv_{p}$ of $p$-bounded variation sequences}, Nonlinear Anal. TMA \textbf{68} (2008), 273-287.
\bibitem{16} F. Ba\c{s}ar, B. Altay, \textit{On the space of sequences of $p$-bounded variation and related matrix mappings}, Ukrainian Math. J. \textbf{55} (1) (2003), 136-147.
\bibitem{fbem} F. Ba\c sar,  E. Malkowsky, \textit{The characterization of compact operators on spaces of strongly summable and bounded sequences},
Appl. Math. Comput. \textbf{217} (12) (2011), 5199-5207.
\bibitem{fb}
F. Ba\c sar, \textit{Summability Theory and its Applications}, Bentham Science Publishers, e-books, Monograph, \.Istanbul, 2012.
\bibitem{hcfb}
H. \c Capan, F. Ba\c sar, \textit{Domain of the double band matrix defined by Fibonacci numbers in the Maddox's space $\ell(p)$}, Electron. J. Math. Anal. Appl. \textbf{3} (2) (2015), 31--45.
\bibitem{dashazarika1} A. Das, B. Hazarika, \textit{Some properties of generalized Fibonacci difference bounded and  $p$-absolutely convergent sequences}, Bol. Soc. Parana. Mat. (accepted)
\bibitem{dashazarika2} A. Das, B. Hazarika, \textit{Matrix transformation of Fibonacci band matrix on generalized $bv$-space and its dual spaces}, Bol. Soc. Parana. Mat. (accepted)
\bibitem{dashaza01} A. Das, B. Hazarika, \textit{Some new Fibonacci difference spaces of non-absolute type and compact  operators}, arXiv:1604.07936v1.
\bibitem{J} J. Diestel, \textit{Sequences and Series in Banach Spaces}, vol. 92 (1984), Springer, New York, NY, USA.
\bibitem{cp7}  I. Djolovi\'{c}, \textit{Compact operators on the spaces $a^{r}_{0}(\Delta)$ and  $a^{r}_{c}(\Delta)$}, J. Math. Anal. Appl. \textbf{318} (2) (2006), 658--666.
\bibitem{cp} I. Djolovi\'{c}, E. Malkowsky, \textit{A note on compact operators on matrix domains}, J. Math. Anal. Appl. \textbf{340} (2008), no. 1, 291--303.
\bibitem{G} J. Garc\'{\i}a-Falset, \textit{Stability and fixed points for nonexpansive mappings} Houst. J. Math. \textbf{20} (3) (1994), 495-506.
\bibitem{G1} J. Garc\'{\i}a-Falset, \textit{The fixed point property in Banach spaces with the NUS-property}, J. Math. Anal. Appl. \textbf{215} (2) (1997), 532-542.
\bibitem{ajm} A.M. Jarrah, E. Malkowsky, \textit{BK spaces, bases and linear operators}, Rendiconti Circ. Mat. Palermo II
\textbf{52} (1990) 177--191.
\bibitem{cp1} A.M. Jarrah, E. Malkowsky, \textit{Ordinary, absolute and strong summability and matrix transformations}, Filomat \textbf{17} (2003), 59-78.

\bibitem{PK} P.K. Kamthan, M. Gupta,  \textit{Sequence Spaces and  Series}, Marcel Dekker Inc., New York and Basel, 1981.
\bibitem{Kara} E.E. Kara, \textit{Some topological and geometrical properties of new Banach sequence spaces}, J. Inequal. Appl. 2013, 2013:38.
\bibitem{Kara16} E. E. Kara and Merve Ilkhan, Some properties of generalized Fibonacci sequence spaces, Linear Multilinear Algebra, 64(11)(2016) 2208-2223.
\bibitem{Karamursaleen} E.E. Kara, M. Ba\c{s}arır, M. Mursaleen, Compactness of matrix operators on some sequence spaces derived by Fibonacci numbers, Kragujevac J. Math. 39(2)(2015) 217-230.
      \bibitem{Kara15} E.E. Kara, S. Demiriz, Some new paranormed difference sequence spaces derived by Fibonacci numbers, Miskolc Math. Notes. 16(2)(2015) 907-923.
\bibitem{Kiri} M. Kiri\c{s}\c{c}i, F. Ba\c{s}ar, \textit{Some new sequence spaces derived by the domain of generalized difference matrix}, Comput. Math. Appl.  \textbf{60} (2010), 1229-1309.
%\bibitem{Kizmaz} H. Kizmaz, \textit{On certain sequence spaces}, Canad. Math. Bull. \textbf{24} (2) (1981), 169-176.
\bibitem{K1} H. Knaust, \textit{Orlicz sequence spaces of Banach-Saks type}, Arch. Math. \textbf{59} (6) (1992), 562-565.
\bibitem{Koshy} T. Koshy, \textit{Fibonacci and Lucas Numbers with applications}, Wiley, 2001.

\bibitem{cp2} E. Malkowsky, \textit{Klassen von Matrixabbildungen in paranormierten FK-R\"{a}umen}, Analysis (Munich) \textbf{7} (1987), 275-292.
\bibitem{cp3} E. Malkowsky, V. Rako\u{c}evi\'{c},
\textit{An introduction into the theory of sequence spaces and measure of non-compactness}, in: Zb. Rad. (Beogr.), vol. 9 (17), Matemati\u{c}ki institut SANU, Belgrade, 2000, pp. 143-234.
\bibitem{cp4} E. Malkowsky, V. Rako\u{c}evi\'{c}, S. \v{Z}ivkovi\'{c}, \textit{Matrix transformations between the sequence spaces $w_{0}^{p}(\Lambda),$ $v_{0}^{p}(\Lambda),$  $c_{0}^{p}(\Lambda),$ $1 < p < \infty, $ and  BK spaces}, Appl. Math. Comput. \textbf{147} (2004), 377-396.
%\bibitem{J} J. Diestel, \textit{Sequences and Series in Banach Spaces}, vol. 92 (1984), Springer, New York, NY, USA.
\bibitem{M.Mursaleen} M. Mursaleen, \textit{Generalized spaces of difference sequences}, J. Math. Anal. Appl. \textbf{203} (3) (1996), 738-745.
%\bibitem{fb} F. Ba\c sar, \textit{Summability Theory and its Applications}, Bentham Science Publishers, e-books, Monograph, \.Istanbul, 2012.

%\bibitem{hcfb} H. \c Capan, F. Ba\c sar, \textit{Domain of the double band matrix defined by Fibonacci numbers in the Maddox's space $\ell(p)$}, Electron. J. Math. Anal. Appl. \textbf{3} (2) (2015), 31--45.
\bibitem{mursallenpiloat} M. Mursaleen, V. Karakaya, H. Polat, N. \c Simsek, \textit{Measure of non-compactness of matrix operators on some difference sequence spaces of weighted means}, Comput. Math. Appl. \textbf{62} (2011), 814-820.
\bibitem{M1} M. Mursaleen, A.K. Noman, \textit{On some new sequence spaces of non-absolute type related to the spaces $\ell_{p}$ and $\ell_{\infty}$ I}, Filomat \textbf{25} (2) (2011),  33-51.
\bibitem{M} M. Mursaleen, A.K. Gaur, A.H. Saifi, \textit{Some new sequence spaces and their duals and matrix transformations}, Bull. Calcutta Math. Soc. \textbf{88} (3) (1996), 207-212.

\bibitem{leen} M. Mursaleen, A.K. Noman, \textit{Compactness by the Hausdorff measure of non-compactness}, Nonlinear Anal. TMA \textbf{73} (8) (2010), 2541-2557.
\bibitem{MursaleenNoman} M. Mursaleen, A.K. Noman, \textit{On the spaces of $\lambda$-convergent and bounded sequences}, Thai J. Math. \textbf{8} (2) (2010), 311-329.
\bibitem{cp5} V. Rako\u{c}evi\'{c}, \textit{Measures of non-compactness and some applications}, Filomat \textbf{12} (2) (1998), 87-120.
\bibitem{Michael} M. Stieglitz, H. Tietz, \textit{Matrixtransformationen von Folgenr\"{a}umen Eine Ergebnis\"{u}bersicht}, Math. Z. \textbf{154} (1977), 1-16.
\bibitem{tripathy} B.C. Tripathy, \textit{Matrix transformation between some classes of
sequences}, J. Math. Anal. Appl. 2(1997) 448-450.
\bibitem{otfb}
O. Tu\u{g}, F. Ba\c sar, \textit{On the domain of N\"orlund mean in the spaces of null and convergent sequences}, TWMS J. Pure Appl. Math. \textbf{7} (1) (2016), 76--87.
\bibitem{otfb2}
O. Tu\u{g}, F. Ba\c sar, \textit{On the spaces of N\"orlund almost null and N\"orlund almost convergent sequences}, Filomat \textbf{30} (3) (2016), 773--783.
\bibitem{cp6} A. Wilansky, \textit{Summability through Functional Analysis}, North-Holland Mathematics Studies \textbf{85}, Amsterdam \textbf{\textperiodcentered} New York \textbf{\textperiodcentered} Oxford, 1984.
\end{thebibliography}
\end{document}